\documentclass[ final, 11pt]{article}
%%%%%%%%%%%%%%%%%%%%%%%%%%%%%%%%%%%%%%%%%%%%%%%%%%%%%%%%%%%%%%%%%%%%%%%%%%%%%%%%%%%%%%%%%%%%%%%%%%%%%%%%%%%%%%%%%%%%%%%%%%%%%%%%%%%%%%%%%%%%%%%%%%%%%%%%%%%%%%%%%%%%%%%%%%%%%%%%%%%%%%%%%%%%%%%%%%%%%%%%%%%%%%%%%%%%%%%%%%%%%%%%%%%%%%%%%%%%%%%%%%%%%%%%%%%%
\usepackage{hyperref}
\usepackage{amscd}
\usepackage{amsfonts}
\usepackage{indentfirst}
\usepackage{verbatim}
\usepackage{amsmath}
\usepackage{amsthm}
\usepackage{enumerate}
\usepackage{graphicx}
\usepackage{subfig}
\usepackage{color}
\usepackage[OT1]{fontenc}
\usepackage[latin1]{inputenc}
\usepackage[english]{babel}
\usepackage{amssymb}

\setcounter{MaxMatrixCols}{10}
%TCIDATA{OutputFilter=LATEX.DLL}
%TCIDATA{Version=5.50.0.2960}
%TCIDATA{<META NAME="SaveForMode" CONTENT="1">}
%TCIDATA{BibliographyScheme=Manual}
%TCIDATA{LastRevised=Monday, December 12, 2016 19:19:37}
%TCIDATA{<META NAME="GraphicsSave" CONTENT="32">}
%TCIDATA{Language=American English}

\setlength\topmargin{-1.5cm}
\hoffset=-1.9cm
\setlength\textheight{23.0cm}
\setlength\textwidth{17.0cm}
  
\newtheorem{theorem}{Theorem}[section]
\newtheorem{lemma}{Lemma}[section]

\newtheorem{definition}{Definition}[section]
\newtheorem{algorithm}{Algorithm}[section]
\newtheorem{corollary}{Corollary}[section]
\newtheorem{remark}{Remark}[section]
\newtheorem{assumption}{Assumption}[section]
\numberwithin{equation}{section}
 \pagestyle{myheadings}

\newcommand{\x}{{\bf x}}

% Macros for Scientific Word 3.0 documents saved with the LaTeX filter.
%Copyright (C) 1994-97 TCI Software Research, Inc.
\typeout{TCILATEX Macros for Scientific Word 3.0 <19 May 1997>.}
\typeout{NOTICE:  This macro file is NOT proprietary and may be 
freely copied and distributed.}
\makeatletter
%
%%%%%%%%%%%%%%%%%%%%%%
% macros for time
\newcount\@hour\newcount\@minute\chardef\@x10\chardef\@xv60
\def\tcitime{
\def\@time{%
  \@minute\time\@hour\@minute\divide\@hour\@xv
  \ifnum\@hour<\@x 0\fi\the\@hour:%
  \multiply\@hour\@xv\advance\@minute-\@hour
  \ifnum\@minute<\@x 0\fi\the\@minute
  }}%

%%%%%%%%%%%%%%%%%%%%%%
% macro for hyperref
\@ifundefined{hyperref}{}{}

% macro for external program call
\@ifundefined{qExtProgCall}{\def\qExtProgCall#1#2#3#4#5#6{\relax}}{}
%%%%%%%%%%%%%%%%%%%%%%
%
% macros for graphics
%
%
%
\def\QCTOpt[#1]#2{%
  \def\QCTOptB{#1}
  \def\QCTOptA{#2}
}
\def\QCTNOpt#1{%
  \def\QCTOptA{#1}
  \let\QCTOptB\empty
}
\def\Qct{%
  \@ifnextchar[{%
    \QCTOpt}{\QCTNOpt}
}
\def\QCBOpt[#1]#2{%
  \def\QCBOptB{#1}
  \def\QCBOptA{#2}
}
\def\QCBNOpt#1{%
  \def\QCBOptA{#1}
  \let\QCBOptB\empty
}
\def\Qcb{%
  \@ifnextchar[{%
    \QCBOpt}{\QCBNOpt}
}
\def\PrepCapArgs{%
  \ifx\QCBOptA\empty
    \ifx\QCTOptA\empty
      {}%
    \else
      \ifx\QCTOptB\empty
        {\QCTOptA}%
      \else
        [\QCTOptB]{\QCTOptA}%
      \fi
    \fi
  \else
    \ifx\QCBOptA\empty
      {}%
    \else
      \ifx\QCBOptB\empty
        {\QCBOptA}%
      \else
        [\QCBOptB]{\QCBOptA}%
      \fi
    \fi
  \fi
}
\newcount\GRAPHICSTYPE
%\GRAPHICSTYPE 0 is for TurboTeX
%\GRAPHICSTYPE 1 is for DVIWindo (PostScript)
%%%(removed)%\GRAPHICSTYPE 2 is for psfig (PostScript)
\GRAPHICSTYPE=\z@
\def\GRAPHICSPS#1{%
 \ifcase\GRAPHICSTYPE%\GRAPHICSTYPE=0
   \special{ps: #1}%
 \or%\GRAPHICSTYPE=1
   \special{language "PS", include "#1"}%
%%%\or%\GRAPHICSTYPE=2
%%%  #1%
 \fi
}%
%
%
%
% \graffile{ body }                                  %#1
%          { contentswidth (scalar)  }               %#2
%          { contentsheight (scalar) }               %#3
%          { vertical shift when in-line (scalar) }  %#4
\def\graffile#1#2#3#4{%
%%% \ifnum\GRAPHICSTYPE=\tw@
%%%  %Following if using psfig
%%%  \@ifundefined{psfig}{\input psfig.tex}{}%
%%%  \psfig{file=#1, height=#3, width=#2}%
%%% \else
  %Following for all others
  % JCS - added BOXTHEFRAME, see below
    \bgroup
    \leavevmode
    \@ifundefined{bbl@deactivate}{\def~{\string~}}{\activesoff}
    \raise -#4 \BOXTHEFRAME{%
        \hbox to #2{\raise #3\hbox to #2{\null #1\hfil}}}%
    \egroup
}%
%
% A box for drafts
\def\draftbox#1#2#3#4{%
 \leavevmode\raise -#4 \hbox{%
  \frame{\rlap{\protect\tiny #1}\hbox to #2%
   {\vrule height#3 width\z@ depth\z@\hfil}%
  }%
 }%
}%
\newcount\draft
\draft=\z@

\newif\ifwasdraft
\wasdraftfalse

%  \GRAPHIC{ body }                                  %#1
%          { draft name }                            %#2
%          { contentswidth (scalar)  }               %#3
%          { contentsheight (scalar) }               %#4
%          { vertical shift when in-line (scalar) }  %#5
\def\GRAPHIC#1#2#3#4#5{%
 \ifnum\draft=\@ne\draftbox{#2}{#3}{#4}{#5}%
  \else\graffile{#1}{#3}{#4}{#5}%
  \fi
 }%
\def\addtoLaTeXparams#1{%
    \edef\LaTeXparams{\LaTeXparams #1}}%
%
% JCS -  added a switch BoxFrame that can 
% be set by including X in the frame params.
% If set a box is drawn around the frame.

\newif\ifBoxFrame \BoxFramefalse
\newif\ifOverFrame \OverFramefalse
\newif\ifUnderFrame \UnderFramefalse

\def\BOXTHEFRAME#1{%
   \hbox{%
      \ifBoxFrame
         \frame{#1}%
      \else
         {#1}%
      \fi
   }%
}

\def\doFRAMEparams#1{\BoxFramefalse\OverFramefalse\UnderFramefalse\readFRAMEparams#1\end}%
\def\readFRAMEparams#1{%
 \ifx#1\end%
  \let\next=\relax
  \else
  \ifx#1i\dispkind=\z@\fi
  \ifx#1d\dispkind=\@ne\fi
  \ifx#1f\dispkind=\tw@\fi
  \ifx#1t\addtoLaTeXparams{t}\fi
  \ifx#1b\addtoLaTeXparams{b}\fi
  \ifx#1p\addtoLaTeXparams{p}\fi
  \ifx#1h\addtoLaTeXparams{h}\fi
  \ifx#1X\BoxFrametrue\fi
  \ifx#1O\OverFrametrue\fi
  \ifx#1U\UnderFrametrue\fi
  \ifx#1w
    \ifnum\draft=1\wasdrafttrue\else\wasdraftfalse\fi
    \draft=\@ne
  \fi
  \let\next=\readFRAMEparams
  \fi
 \next
 }%
%
%Macro for In-line graphics object
%   \IFRAME{ contentswidth (scalar)  }               %#1
%          { contentsheight (scalar) }               %#2
%          { vertical shift when in-line (scalar) }  %#3
%          { draft name }                            %#4
%          { body }                                  %#5
%          { caption}                                %#6

\def\IFRAME#1#2#3#4#5#6{%
      \bgroup
      \let\QCTOptA\empty
      \let\QCTOptB\empty
      \let\QCBOptA\empty
      \let\QCBOptB\empty
      #6%
      \parindent=0pt%
      \leftskip=0pt
      \rightskip=0pt
      \setbox0 = \hbox{\QCBOptA}%
      \@tempdima = #1\relax
      \ifOverFrame
          % Do this later
          \typeout{This is not implemented yet}%
          \show\HELP
      \else
         \ifdim\wd0>\@tempdima
            \advance\@tempdima by \@tempdima
            \ifdim\wd0 >\@tempdima
               \textwidth=\@tempdima
               \setbox1 =\vbox{%
                  \noindent\hbox to \@tempdima{\hfill\GRAPHIC{#5}{#4}{#1}{#2}{#3}\hfill}\\%
                  \noindent\hbox to \@tempdima{\parbox[b]{\@tempdima}{\QCBOptA}}%
               }%
               \wd1=\@tempdima
            \else
               \textwidth=\wd0
               \setbox1 =\vbox{%
                 \noindent\hbox to \wd0{\hfill\GRAPHIC{#5}{#4}{#1}{#2}{#3}\hfill}\\%
                 \noindent\hbox{\QCBOptA}%
               }%
               \wd1=\wd0
            \fi
         \else
            %\show\BBB
            \ifdim\wd0>0pt
              \hsize=\@tempdima
              \setbox1 =\vbox{%
                \unskip\GRAPHIC{#5}{#4}{#1}{#2}{0pt}%
                \break
                \unskip\hbox to \@tempdima{\hfill \QCBOptA\hfill}%
              }%
              \wd1=\@tempdima
           \else
              \hsize=\@tempdima
              \setbox1 =\vbox{%
                \unskip\GRAPHIC{#5}{#4}{#1}{#2}{0pt}%
              }%
              \wd1=\@tempdima
           \fi
         \fi
         \@tempdimb=\ht1
         \advance\@tempdimb by \dp1
         \advance\@tempdimb by -#2%
         \advance\@tempdimb by #3%
         \leavevmode
         \raise -\@tempdimb \hbox{\box1}%
      \fi
      \egroup%
}%
%
%Macro for Display graphics object
%   \DFRAME{ contentswidth (scalar)  }               %#1
%          { contentsheight (scalar) }               %#2
%          { draft label }                           %#3
%          { name }                                  %#4
%          { caption}                                %#5
\def\DFRAME#1#2#3#4#5{%
 \begin{center}
     \let\QCTOptA\empty
     \let\QCTOptB\empty
     \let\QCBOptA\empty
     \let\QCBOptB\empty
     \ifOverFrame 
        #5\QCTOptA\par
     \fi
     \GRAPHIC{#4}{#3}{#1}{#2}{\z@}
     \ifUnderFrame 
        \nobreak\par\nobreak#5\QCBOptA
     \fi
 \end{center}%
 }%
%
%Macro for Floating graphic object
%   \FFRAME{ framedata f|i tbph x F|T }              %#1
%          { contentswidth (scalar)  }               %#2
%          { contentsheight (scalar) }               %#3
%          { caption }                               %#4
%          { label }                                 %#5
%          { draft name }                            %#6
%          { body }                                  %#7
\def\FFRAME#1#2#3#4#5#6#7{%
 \begin{figure}[#1]%
  \let\QCTOptA\empty
  \let\QCTOptB\empty
  \let\QCBOptA\empty
  \let\QCBOptB\empty
  \ifOverFrame
    #4
    \ifx\QCTOptA\empty
    \else
      \ifx\QCTOptB\empty
        \caption{\QCTOptA}%
      \else
        \caption[\QCTOptB]{\QCTOptA}%
      \fi
    \fi
    \ifUnderFrame\else
      \label{#5}%
    \fi
  \else
    \UnderFrametrue%
  \fi
  \begin{center}\GRAPHIC{#7}{#6}{#2}{#3}{\z@}\end{center}%
  \ifUnderFrame
    #4
    \ifx\QCBOptA\empty
      \caption{}%
    \else
      \ifx\QCBOptB\empty
        \caption{\QCBOptA}%
      \else
        \caption[\QCBOptB]{\QCBOptA}%
      \fi
    \fi
    \label{#5}%
  \fi
  \end{figure}%
 }%
%
%
%    \FRAME{ framedata f|i tbph x F|T }              %#1
%          { contentswidth (scalar)  }               %#2
%          { contentsheight (scalar) }               %#3
%          { vertical shift when in-line (scalar) }  %#4
%          { caption }                               %#5
%          { label }                                 %#6
%          { name }                                  %#7
%          { body }                                  %#8
%
%    framedata is a string which can contain the following
%    characters: idftbphxFT
%    Their meaning is as follows:
%             i, d or f : in-line, display, or floating
%             t,b,p,h   : LaTeX floating placement options
%             x         : fit contents box to contents
%             F or T    : Figure or Table. 
%                         Later this can expand
%                         to a more general float class.
%
%
\newcount\dispkind%

\def\makeactives{
  \catcode`\"=\active
  \catcode`\;=\active
  \catcode`\:=\active
  \catcode`\'=\active
  \catcode`\~=\active
}
\bgroup
   \makeactives
   \gdef\activesoff{%
      \def"{\string"}
      \def;{\string;}
      \def:{\string:}
      \def'{\string'}
      \def~{\string~}
      %\bbl@deactivate{"}%
      %\bbl@deactivate{;}%
      %\bbl@deactivate{:}%
      %\bbl@deactivate{'}%
    }
\egroup

\def\FRAME#1#2#3#4#5#6#7#8{%
 \bgroup
 \ifnum\draft=\@ne
   \wasdrafttrue
 \else
   \wasdraftfalse%
 \fi
 \def\LaTeXparams{}%
 \dispkind=\z@
 \def\LaTeXparams{}%
 \doFRAMEparams{#1}%
 \ifnum\dispkind=\z@\IFRAME{#2}{#3}{#4}{#7}{#8}{#5}\else
  \ifnum\dispkind=\@ne\DFRAME{#2}{#3}{#7}{#8}{#5}\else
   \ifnum\dispkind=\tw@
    \edef\@tempa{\noexpand\FFRAME{\LaTeXparams}}%
    \@tempa{#2}{#3}{#5}{#6}{#7}{#8}%
    \fi
   \fi
  \fi
  \ifwasdraft\draft=1\else\draft=0\fi{}%
  \egroup
 }%
%
% This macro added to let SW gobble a parameter that
% should not be passed on and expanded. 

\def\TEXUX#1{"texux"}

%
% Macros for text attributes:
%
%
%
%
%%%%%%%%%%%%%%%%%%%%%%%%%%%%%%%%%%%%%%%%%%%%%%%%%%%%%%%%%%%%%%%%%%%%%%%%
%
%
% macros for user - defined functions
%
\def\func#1{\mathop{\rm #1}\nolimits}%
% macro for unit names
%

%
% miscellaneous 
\long\def\QQQ#1#2{%
     \long\expandafter\def\csname#1\endcsname{#2}}%
\@ifundefined{QTP}{\def\QTP#1{}}{}
\@ifundefined{QEXCLUDE}{\def\QEXCLUDE#1{}}{}
\@ifundefined{Qlb}{}{}
\@ifundefined{Qlt}{}{}
\long\def\QQA#1#2{}%
\def\QTR#1#2{{\csname#1\endcsname #2}}%(gp) Is this the best?
\def\EXPAND#1[#2]#3{}%
\def\NOEXPAND#1[#2]#3{}%
\def\LaTeXparent#1{}%
\def\ChildStyles#1{}%
\def\ChildDefaults#1{}%
\def\QTagDef#1#2#3{}%

% Constructs added with Scientific Notebook
\@ifundefined{correctchoice}{}{}
\@ifundefined{HTML}{\def\HTML#1{\relax}}{}
\@ifundefined{TCIIcon}{\def\TCIIcon#1#2#3#4{\relax}}{}
\if@compatibility
  \typeout{Not defining UNICODE or CustomNote commands for LaTeX 2.09.}
\else
  \providecommand{\UNICODE}[2][]{}
  
\fi

%
% Macros for style editor docs
\@ifundefined{StyleEditBeginDoc}{}{}
%
% Macros for footnotes
\def\QQfnmark#1{\footnotemark}

%
% Macros for indexing.
%
\@ifundefined{TCIMAKEINDEX}{}{\makeindex}%
%
% Attempts to avoid problems with other styles
\@ifundefined{abstract}{%
 \def\abstract{%
  \if@twocolumn
   \section*{Abstract (Not appropriate in this style!)}%
   \else \small 
   \begin{center}{\bf Abstract\vspace{-.5em}\vspace{\z@}}\end{center}%
   \quotation 
   \fi
  }%
 }{%
 }%
\@ifundefined{endabstract}{\def\endabstract
  {\if@twocolumn\else\endquotation\fi}}{}%
\@ifundefined{maketitle}{\def\maketitle#1{}}{}%
\@ifundefined{affiliation}{\def\affiliation#1{}}{}%
\@ifundefined{proof}{}{}%
\@ifundefined{endproof}{}{}%
\@ifundefined{newfield}{\def\newfield#1#2{}}{}%
\@ifundefined{chapter}{\def\chapter#1{\par(Chapter head:)#1\par }%
 \newcount\c@chapter}{}%
\@ifundefined{part}{\def\part#1{\par(Part head:)#1\par }}{}%
\@ifundefined{section}{\def\section#1{\par(Section head:)#1\par }}{}%
\@ifundefined{subsection}{\def\subsection#1%
 {\par(Subsection head:)#1\par }}{}%
\@ifundefined{subsubsection}{\def\subsubsection#1%
 {\par(Subsubsection head:)#1\par }}{}%
\@ifundefined{paragraph}{\def\paragraph#1%
 {\par(Subsubsubsection head:)#1\par }}{}%
\@ifundefined{subparagraph}{\def\subparagraph#1%
 {\par(Subsubsubsubsection head:)#1\par }}{}%
%%%%%%%%%%%%%%%%%%%%%%%%%%%%%%%%%%%%%%%%%%%%%%%%%%%%%%%%%%%%%%%%%%%%%%%%
% These symbols are not recognized by LaTeX
\@ifundefined{therefore}{}{}%
\@ifundefined{backepsilon}{}{}%
\@ifundefined{yen}{}{}%
\@ifundefined{registered}{%
   \def\registered{\relax\ifmmode{}\r@gistered
                    \else$\m@th\r@gistered$\fi}%
 \def\r@gistered{^{\ooalign
  {\hfil\raise.07ex\hbox{$\scriptstyle\rm\text{R}$}\hfil\crcr
  \mathhexbox20D}}}}{}%
\@ifundefined{Eth}{}{}%
\@ifundefined{eth}{}{}%
\@ifundefined{Thorn}{}{}%
\@ifundefined{thorn}{}{}%
% A macro to allow any symbol that requires math to appear in text
%
\@ifundefined{degree}{}{}%
%
% macros for T3TeX files
\newdimen\theight
\def\Column{%
 \vadjust{\setbox\z@=\hbox{\scriptsize\quad\quad tcol}%
  \theight=\ht\z@\advance\theight by \dp\z@\advance\theight by \lineskip
  \kern -\theight \vbox to \theight{%
   \rightline{\rlap{\box\z@}}%
   \vss
   }%
  }%
 }%
\def\qed{%
 \ifhmode\unskip\nobreak\fi\ifmmode\ifinner\else\hskip5\p@\fi\fi
 \hbox{\hskip5\p@\vrule width4\p@ height6\p@ depth1.5\p@\hskip\p@}%
 }%
\def\miss{\hbox{\vrule height2\p@ width 2\p@ depth\z@}}%
%
%           %always translated to \left| or \right|
%
\def\tcol#1{{\baselineskip=6\p@ \vcenter{#1}} \Column}  %
%
%                 %dummy entry in column 
%             %column entry
%               %column entry (not math)
%
\@ifundefined{note}{}{}%

\def\newfmtname{LaTeX2e}
% No longer load latexsym.  This is now handled by SWP, which uses amsfonts if necessary

\ifx\fmtname\newfmtname
  \DeclareOldFontCommand{\rm}{\normalfont\rmfamily}{\mathrm}
  \DeclareOldFontCommand{\sf}{\normalfont\sffamily}{\mathsf}
  \DeclareOldFontCommand{\tt}{\normalfont\ttfamily}{\mathtt}
  \DeclareOldFontCommand{\bf}{\normalfont\bfseries}{\mathbf}
  \DeclareOldFontCommand{\it}{\normalfont\itshape}{\mathit}
  \DeclareOldFontCommand{\sl}{\normalfont\slshape}{\@nomath\sl}
  \DeclareOldFontCommand{\sc}{\normalfont\scshape}{\@nomath\sc}
\fi

%
% Greek bold macros
% Redefine all of the math symbols 
% which might be bolded	 - there are 
% probably others to add to this list

\def\alpha{{\Greekmath 010B}}%
\def\beta{{\Greekmath 010C}}%
\def\gamma{{\Greekmath 010D}}%
\def\delta{{\Greekmath 010E}}%
\def\epsilon{{\Greekmath 010F}}%
\def\zeta{{\Greekmath 0110}}%
\def\eta{{\Greekmath 0111}}%
\def\theta{{\Greekmath 0112}}%
\def\iota{{\Greekmath 0113}}%
\def\kappa{{\Greekmath 0114}}%
\def\lambda{{\Greekmath 0115}}%
\def\mu{{\Greekmath 0116}}%
\def\nu{{\Greekmath 0117}}%
\def\xi{{\Greekmath 0118}}%
\def\pi{{\Greekmath 0119}}%
\def\rho{{\Greekmath 011A}}%
\def\sigma{{\Greekmath 011B}}%
\def\tau{{\Greekmath 011C}}%
\def\upsilon{{\Greekmath 011D}}%
\def\phi{{\Greekmath 011E}}%
\def\chi{{\Greekmath 011F}}%
\def\psi{{\Greekmath 0120}}%
\def\omega{{\Greekmath 0121}}%
\def\varepsilon{{\Greekmath 0122}}%
\def\vartheta{{\Greekmath 0123}}%
\def\varpi{{\Greekmath 0124}}%
\def\varrho{{\Greekmath 0125}}%
\def\varsigma{{\Greekmath 0126}}%
\def\varphi{{\Greekmath 0127}}%

\def\nabla{{\Greekmath 0272}}
\def\FindBoldGroup{%
   {\setbox0=\hbox{$\mathbf{x\global\edef\theboldgroup{\the\mathgroup}}$}}%
}

\def\Greekmath#1#2#3#4{%
    \if@compatibility
        \ifnum\mathgroup=\symbold
           \mathchoice{\mbox{\boldmath$\displaystyle\mathchar"#1#2#3#4$}}%
                      {\mbox{\boldmath$\textstyle\mathchar"#1#2#3#4$}}%
                      {\mbox{\boldmath$\scriptstyle\mathchar"#1#2#3#4$}}%
                      {\mbox{\boldmath$\scriptscriptstyle\mathchar"#1#2#3#4$}}%
        \else
           \mathchar"#1#2#3#4% 
        \fi 
    \else 
        \FindBoldGroup
        \ifnum\mathgroup=\theboldgroup % For 2e
           \mathchoice{\mbox{\boldmath$\displaystyle\mathchar"#1#2#3#4$}}%
                      {\mbox{\boldmath$\textstyle\mathchar"#1#2#3#4$}}%
                      {\mbox{\boldmath$\scriptstyle\mathchar"#1#2#3#4$}}%
                      {\mbox{\boldmath$\scriptscriptstyle\mathchar"#1#2#3#4$}}%
        \else
           \mathchar"#1#2#3#4% 
        \fi     	    
	  \fi}

\newif\ifGreekBold  \GreekBoldfalse
\let\SAVEPBF=\pbf
\def\pbf{\GreekBoldtrue\SAVEPBF}%

\@ifundefined{theorem}{\newtheorem{theorem}{Theorem}}{}
\@ifundefined{lemma}{\newtheorem{lemma}[theorem]{Lemma}}{}
\@ifundefined{corollary}{\newtheorem{corollary}[theorem]{Corollary}}{}
\@ifundefined{conjecture}{}{}
\@ifundefined{proposition}{}{}
\@ifundefined{axiom}{}{}
\@ifundefined{remark}{\newtheorem{remark}{Remark}}{}
\@ifundefined{example}{}{}
\@ifundefined{exercise}{}{}
\@ifundefined{definition}{}{}

\@ifundefined{mathletters}{%
  \newcounter{equationnumber}  
  \def\mathletters{%
     \addtocounter{equation}{1}
     \edef\@currentlabel{\theequation}%
     \setcounter{equationnumber}{\c@equation}
     \setcounter{equation}{0}%
     \edef\theequation{\@currentlabel\noexpand\alph{equation}}%
  }
  
}{}

%Logos
\@ifundefined{BibTeX}{%
    \def\BibTeX{{\rm B\kern-.05em{\sc i\kern-.025em b}\kern-.08em
                 T\kern-.1667em\lower.7ex\hbox{E}\kern-.125emX}}}{}%
\@ifundefined{AmS}%
    {\def\AmS{{\protect\usefont{OMS}{cmsy}{m}{n}%
                A\kern-.1667em\lower.5ex\hbox{M}\kern-.125emS}}}{}%
\@ifundefined{AmSTeX}{}{}%
%

% This macro is a fix to eqnarray
\def\@@eqncr{\let\@tempa\relax
    \ifcase\@eqcnt \def\@tempa{& & &}\or \def\@tempa{& &}%
      \else \def\@tempa{&}\fi
     \@tempa
     \if@eqnsw
        \iftag@
           \@taggnum
        \else
           \@eqnnum\stepcounter{equation}%
        \fi
     \fi
     \global\tag@false
     \global\@eqnswtrue
     \global\@eqcnt\z@\cr}

\def\TCItag{\@ifnextchar*{\@TCItagstar}{\@TCItag}}
\def\@TCItag#1{%
    \global\tag@true
    \global\def\@taggnum{(#1)}}
\def\@TCItagstar*#1{%
    \global\tag@true
    \global\def\@taggnum{#1}}
%
%%%%%%%%%%%%%%%%%%%%%%%%%%%%%%%%%%%%%%%%%%%%%%%%%%%%%%%%%%%%%%%%%%%%%
%
%
%
%
%
%
%
%
%
%
%
%
%
%
%
%
%
%
%
%
%
%
% Macros for text size operators:
%
%
%
%
%
%
%
%
%
%
%
%
%
%
%
%
%
%
%
%
%
%Macros for display size operators:
%
\def\dint{\mathop{\displaystyle \int}}%
%
%
%
%
%
%
%
%
%
%
%
%
%
%
%
%
%

%%%%%%%%%%%%%%%%%%%%%%%%%%%%%%%%%%%%%%%%%%%%%%%%%%%%%%%%%%%%%%%%%%%%%%%
% NOTE: The rest of this file is read only if amstex has not been
% loaded.  This section is used to define amstex constructs in the
% event they have not been defined.
%
%
\ifx\ds@amstex\relax
   \message{amstex already loaded}\makeatother % 2.09 compatability
\else
   \@ifpackageloaded{amsmath}%
      {\message{amsmath already loaded}\makeatother }
      {}
   \@ifpackageloaded{amstex}%
      {\message{amstex already loaded}\makeatother }
      {}
   \@ifpackageloaded{amsgen}%
      {\message{amsgen already loaded}\makeatother }
      {}
\fi
%%%%%%%%%%%%%%%%%%%%%%%%%%%%%%%%%%%%%%%%%%%%%%%%%%%%%%%%%%%%%%%%%%%%%%%%
%%
%
%
%  Macros to define some AMS LaTeX constructs when 
%  AMS LaTeX has not been loaded
% 
% These macros are copied from the AMS-TeX package for doing
% multiple integrals.
%
\let\DOTSI\relax
\def\RIfM@{\relax\ifmmode}%
\def\FN@{\futurelet\next}%
\newcount\intno@
\def\iint{\DOTSI\intno@\tw@\FN@\ints@}%
\def\iiint{\DOTSI\intno@\thr@@\FN@\ints@}%
\def\iiiint{\DOTSI\intno@4 \FN@\ints@}%
\def\idotsint{\DOTSI\intno@\z@\FN@\ints@}%
\def\ints@{\findlimits@\ints@@}%
\newif\iflimtoken@
\newif\iflimits@
\def\findlimits@{\limtoken@true\ifx\next\limits\limits@true
 \else\ifx\next\nolimits\limits@false\else
 \limtoken@false\ifx\ilimits@\nolimits\limits@false\else
 \ifinner\limits@false\else\limits@true\fi\fi\fi\fi}%
\def\multint@{\int\ifnum\intno@=\z@\intdots@                          %1
 \else\intkern@\fi                                                    %2
 \ifnum\intno@>\tw@\int\intkern@\fi                                   %3
 \ifnum\intno@>\thr@@\int\intkern@\fi                                 %4
 \int}%                                                               %5
\def\multintlimits@{\intop\ifnum\intno@=\z@\intdots@\else\intkern@\fi
 \ifnum\intno@>\tw@\intop\intkern@\fi
 \ifnum\intno@>\thr@@\intop\intkern@\fi\intop}%
\def\intic@{%
    \mathchoice{\hskip.5em}{\hskip.4em}{\hskip.4em}{\hskip.4em}}%
\def\negintic@{\mathchoice
 {\hskip-.5em}{\hskip-.4em}{\hskip-.4em}{\hskip-.4em}}%
\def\ints@@{\iflimtoken@                                              %1
 \def\ints@@@{\iflimits@\negintic@
   \mathop{\intic@\multintlimits@}\limits                             %2
  \else\multint@\nolimits\fi                                          %3
  \eat@}%                                                             %4
 \else                                                                %5
 \def\ints@@@{\iflimits@\negintic@
  \mathop{\intic@\multintlimits@}\limits\else
  \multint@\nolimits\fi}\fi\ints@@@}%
\def\intkern@{\mathchoice{\!\!\!}{\!\!}{\!\!}{\!\!}}%
\def\plaincdots@{\mathinner{\cdotp\cdotp\cdotp}}%
\def\intdots@{\mathchoice{\plaincdots@}%
 {{\cdotp}\mkern1.5mu{\cdotp}\mkern1.5mu{\cdotp}}%
 {{\cdotp}\mkern1mu{\cdotp}\mkern1mu{\cdotp}}%
 {{\cdotp}\mkern1mu{\cdotp}\mkern1mu{\cdotp}}}%
%
%
%  These macros are for doing the AMS \text{} construct
%
\def\RIfM@{\relax\protect\ifmmode}
\def\text{\RIfM@\expandafter\text@\else\expandafter\mbox\fi}
\let\nfss@text\text
\def\text@#1{\mathchoice
   {\textdef@\displaystyle\f@size{#1}}%
   {\textdef@\textstyle\tf@size{\firstchoice@false #1}}%
   {\textdef@\textstyle\sf@size{\firstchoice@false #1}}%
   {\textdef@\textstyle \ssf@size{\firstchoice@false #1}}%
   \glb@settings}

\def\textdef@#1#2#3{\hbox{{%
                    \everymath{#1}%
                    \let\f@size#2\selectfont
                    #3}}}
\newif\iffirstchoice@
\firstchoice@true
%
%These are the AMS constructs for multiline limits.
%
\def\Let@{\relax\iffalse{\fi\let\\=\cr\iffalse}\fi}%
\def\vspace@{\def\vspace##1{\crcr\noalign{\vskip##1\relax}}}%
\def\multilimits@{\bgroup\vspace@\Let@
 \baselineskip\fontdimen10 \scriptfont\tw@
 \advance\baselineskip\fontdimen12 \scriptfont\tw@
 \lineskip\thr@@\fontdimen8 \scriptfont\thr@@
 \lineskiplimit\lineskip
 \vbox\bgroup\ialign\bgroup\hfil$\m@th\scriptstyle{##}$\hfil\crcr}%
\def\Sb{_\multilimits@}%
\def\endSb{\crcr\egroup\egroup\egroup}%
\def\Sp{^\multilimits@}%

%
%
%These are AMS constructs for horizontal arrows
%
\newdimen\ex@
\ex@.2326ex
\def\rightarrowfill@#1{$#1\m@th\mathord-\mkern-6mu\cleaders
 \hbox{$#1\mkern-2mu\mathord-\mkern-2mu$}\hfill
 \mkern-6mu\mathord\rightarrow$}%
\def\leftarrowfill@#1{$#1\m@th\mathord\leftarrow\mkern-6mu\cleaders
 \hbox{$#1\mkern-2mu\mathord-\mkern-2mu$}\hfill\mkern-6mu\mathord-$}%
\def\leftrightarrowfill@#1{$#1\m@th\mathord\leftarrow
\mkern-6mu\cleaders
 \hbox{$#1\mkern-2mu\mathord-\mkern-2mu$}\hfill
 \mkern-6mu\mathord\rightarrow$}%
\def\overrightarrow{\mathpalette\overrightarrow@}%
\def\overrightarrow@#1#2{\vbox{\ialign{##\crcr\rightarrowfill@#1\crcr
 \noalign{\kern-\ex@\nointerlineskip}$\m@th\hfil#1#2\hfil$\crcr}}}%

\def\overleftarrow{\mathpalette\overleftarrow@}%
\def\overleftarrow@#1#2{\vbox{\ialign{##\crcr\leftarrowfill@#1\crcr
 \noalign{\kern-\ex@\nointerlineskip}$\m@th\hfil#1#2\hfil$\crcr}}}%
\def\overleftrightarrow{\mathpalette\overleftrightarrow@}%
\def\overleftrightarrow@#1#2{\vbox{\ialign{##\crcr
   \leftrightarrowfill@#1\crcr
 \noalign{\kern-\ex@\nointerlineskip}$\m@th\hfil#1#2\hfil$\crcr}}}%
\def\underrightarrow{\mathpalette\underrightarrow@}%
\def\underrightarrow@#1#2{\vtop{\ialign{##\crcr$\m@th\hfil#1#2\hfil
  $\crcr\noalign{\nointerlineskip}\rightarrowfill@#1\crcr}}}%

\def\underleftarrow{\mathpalette\underleftarrow@}%
\def\underleftarrow@#1#2{\vtop{\ialign{##\crcr$\m@th\hfil#1#2\hfil
  $\crcr\noalign{\nointerlineskip}\leftarrowfill@#1\crcr}}}%
\def\underleftrightarrow{\mathpalette\underleftrightarrow@}%
\def\underleftrightarrow@#1#2{\vtop{\ialign{##\crcr$\m@th
  \hfil#1#2\hfil$\crcr
 \noalign{\nointerlineskip}\leftrightarrowfill@#1\crcr}}}%
%%%%%%%%%%%%%%%%%%%%%

\def\qopnamewl@#1{\mathop{\operator@font#1}\nlimits@}
\let\nlimits@\displaylimits
\def\setboxz@h{\setbox\z@\hbox}

\def\varlim@#1#2{\mathop{\vtop{\ialign{##\crcr
 \hfil$#1\m@th\operator@font lim$\hfil\crcr
 \noalign{\nointerlineskip}#2#1\crcr
 \noalign{\nointerlineskip\kern-\ex@}\crcr}}}}

 \def\rightarrowfill@#1{\m@th\setboxz@h{$#1-$}\ht\z@\z@
  $#1\copy\z@\mkern-6mu\cleaders
  \hbox{$#1\mkern-2mu\box\z@\mkern-2mu$}\hfill
  \mkern-6mu\mathord\rightarrow$}
\def\leftarrowfill@#1{\m@th\setboxz@h{$#1-$}\ht\z@\z@
  $#1\mathord\leftarrow\mkern-6mu\cleaders
  \hbox{$#1\mkern-2mu\copy\z@\mkern-2mu$}\hfill
  \mkern-6mu\box\z@$}

\def\projlim{\qopnamewl@{proj\,lim}}
\def\injlim{\qopnamewl@{inj\,lim}}
\def\varinjlim{\mathpalette\varlim@\rightarrowfill@}
\def\varprojlim{\mathpalette\varlim@\leftarrowfill@}
\def\varliminf{\mathpalette\varliminf@{}}
\def\varliminf@#1{\mathop{\underline{\vrule\@depth.2\ex@\@width\z@
   \hbox{$#1\m@th\operator@font lim$}}}}
\def\varlimsup{\mathpalette\varlimsup@{}}
\def\varlimsup@#1{\mathop{\overline
  {\hbox{$#1\m@th\operator@font lim$}}}}

%
%Companion to stackrel
%
%
%
% These are AMS environments that will be defined to
% be verbatims if amstex has not actually been 
% loaded
%
%
\begingroup \catcode `|=0 \catcode `[= 1
\catcode`]=2 \catcode `\{=12 \catcode `\}=12
\catcode`\\=12 
|gdef|@alignverbatim#1\end{align}[#1|end[align]]
|gdef|@salignverbatim#1\end{align*}[#1|end[align*]]

|gdef|@alignatverbatim#1\end{alignat}[#1|end[alignat]]
|gdef|@salignatverbatim#1\end{alignat*}[#1|end[alignat*]]

|gdef|@xalignatverbatim#1\end{xalignat}[#1|end[xalignat]]
|gdef|@sxalignatverbatim#1\end{xalignat*}[#1|end[xalignat*]]

|gdef|@gatherverbatim#1\end{gather}[#1|end[gather]]
|gdef|@sgatherverbatim#1\end{gather*}[#1|end[gather*]]

|gdef|@gatherverbatim#1\end{gather}[#1|end[gather]]
|gdef|@sgatherverbatim#1\end{gather*}[#1|end[gather*]]

|gdef|@multilineverbatim#1\end{multiline}[#1|end[multiline]]
|gdef|@smultilineverbatim#1\end{multiline*}[#1|end[multiline*]]

|gdef|@arraxverbatim#1\end{arrax}[#1|end[arrax]]
|gdef|@sarraxverbatim#1\end{arrax*}[#1|end[arrax*]]

|gdef|@tabulaxverbatim#1\end{tabulax}[#1|end[tabulax]]
|gdef|@stabulaxverbatim#1\end{tabulax*}[#1|end[tabulax*]]

|endgroup

\def\align{\@verbatim \frenchspacing\@vobeyspaces \@alignverbatim
You are using the "align" environment in a style in which it is not defined.}

\@namedef{align*}{\@verbatim\@salignverbatim
You are using the "align*" environment in a style in which it is not defined.}
\expandafter\let\csname endalign*\endcsname =\endtrivlist

\def\alignat{\@verbatim \frenchspacing\@vobeyspaces \@alignatverbatim
You are using the "alignat" environment in a style in which it is not defined.}

\@namedef{alignat*}{\@verbatim\@salignatverbatim
You are using the "alignat*" environment in a style in which it is not defined.}
\expandafter\let\csname endalignat*\endcsname =\endtrivlist

\def\xalignat{\@verbatim \frenchspacing\@vobeyspaces \@xalignatverbatim
You are using the "xalignat" environment in a style in which it is not defined.}

\@namedef{xalignat*}{\@verbatim\@sxalignatverbatim
You are using the "xalignat*" environment in a style in which it is not defined.}
\expandafter\let\csname endxalignat*\endcsname =\endtrivlist

\def\gather{\@verbatim \frenchspacing\@vobeyspaces \@gatherverbatim
You are using the "gather" environment in a style in which it is not defined.}

\@namedef{gather*}{\@verbatim\@sgatherverbatim
You are using the "gather*" environment in a style in which it is not defined.}
\expandafter\let\csname endgather*\endcsname =\endtrivlist

\def\multiline{\@verbatim \frenchspacing\@vobeyspaces \@multilineverbatim
You are using the "multiline" environment in a style in which it is not defined.}

\@namedef{multiline*}{\@verbatim\@smultilineverbatim
You are using the "multiline*" environment in a style in which it is not defined.}
\expandafter\let\csname endmultiline*\endcsname =\endtrivlist

\def\arrax{\@verbatim \frenchspacing\@vobeyspaces \@arraxverbatim
You are using a type of "array" construct that is only allowed in AmS-LaTeX.}

\def\tabulax{\@verbatim \frenchspacing\@vobeyspaces \@tabulaxverbatim
You are using a type of "tabular" construct that is only allowed in AmS-LaTeX.}

\@namedef{arrax*}{\@verbatim\@sarraxverbatim
You are using a type of "array*" construct that is only allowed in AmS-LaTeX.}
\expandafter\let\csname endarrax*\endcsname =\endtrivlist

\@namedef{tabulax*}{\@verbatim\@stabulaxverbatim
You are using a type of "tabular*" construct that is only allowed in AmS-LaTeX.}
\expandafter\let\csname endtabulax*\endcsname =\endtrivlist

% macro to simulate ams tag construct

% This macro is a fix to the equation environment
 \def\endequation{%
     \ifmmode\ifinner % FLEQN hack
      \iftag@
        \addtocounter{equation}{-1} % undo the increment made in the begin part
        $\hfil
           \displaywidth\linewidth\@taggnum\egroup \endtrivlist
        \global\tag@false
        \global\@ignoretrue   
      \else
        $\hfil
           \displaywidth\linewidth\@eqnnum\egroup \endtrivlist
        \global\tag@false
        \global\@ignoretrue 
      \fi
     \else   
      \iftag@
        \addtocounter{equation}{-1} % undo the increment made in the begin part
        \eqno \hbox{\@taggnum}
        \global\tag@false%
        $$\global\@ignoretrue
      \else
        \eqno \hbox{\@eqnnum}% $$ BRACE MATCHING HACK
        $$\global\@ignoretrue
      \fi
     \fi\fi
 } 

 \newif\iftag@ \tag@false
 
 \def\TCItag{\@ifnextchar*{\@TCItagstar}{\@TCItag}}
 \def\@TCItag#1{%
     \global\tag@true
     \global\def\@taggnum{(#1)}}
 \def\@TCItagstar*#1{%
     \global\tag@true
     \global\def\@taggnum{#1}}

  \@ifundefined{tag}{
     \def\tag{\@ifnextchar*{\@tagstar}{\@tag}}
     \def\@tag#1{%
         \global\tag@true
         \global\def\@taggnum{(#1)}}
     \def\@tagstar*#1{%
         \global\tag@true
         \global\def\@taggnum{#1}}
  }{}
% Do not add anything to the end of this file.  
% The last section of the file is loaded only if 
% amstex has not been.

\makeatother

\begin{document}

\title{A globally convergent numerical method for a 3D coefficient inverse
problem with a single measurement of multi-frequency data}
\author{Michael V. Klibanov\thanks{
Department of Mathematics and Statistics, University of North Carolina at
Charlotte, Charlotte, NC 28223, USA; (\texttt{mklibanv@uncc.edu}, \texttt{%
dnguye70@uncc.edu}, \texttt{lnguye50@uncc.edu}, \texttt{hliu34@uncc.edu}) } 
\thanks{%
corresponding author} \and Dinh-Liem Nguyen\footnotemark[1] \and Loc H.
Nguyen\footnotemark[1] \and Hui Liu\footnotemark[1] }
\date{}
\maketitle

\begin{abstract}
The goal of this paper is to reconstruct spatially distributed dielectric
constants from complex-valued scattered wave field by solving a 3D
coefficient inverse problem for the Helmholtz equation at multi-frequencies.
The data are generated by only a single direction of the incident plane
wave. To solve this inverse problem, a globally convergent algorithm is
analytically developed. We prove that this algorithm provides a good
approximation for the exact coefficient without any \textit{a priori}
knowledge of any point in a small neighborhood of that coefficient. This is
the main advantage of our method, compared with classical approaches using
optimization schemes. Numerical results are presented for both
computationally simulated data and experimental data. Potential applications
of this problem are in detection and identification of explosive-like
targets.
\end{abstract}

%
%
%
%\author{Michael V. Klibanov\footnote{Department of Mathematics \& Statistics,
%University of North Carolina at Charlotte,
%Charlotte, NC 28223, USA, mklibanv@uncc.edu, corresponding author.} \and Dinh-Liem Nguyen\footnote{Department of Mathematics \& Statistics,
%University of North Carolina at Charlotte,
%Charlotte, NC 28223, USA, dnguye70@uncc.edu.} \and Loc H. Nguyen\footnote{Department of Mathematics \& Statistics,
%University of North Carolina at Charlotte,
%Charlotte, NC 28223, USA, lnguye50@uncc.edu.} \and Hui Liu\footnote{Department of Mathematics \& Statistics,
%University of North Carolina at Charlotte,
%Charlotte, NC 28223, USA, hliu34@uncc.edu.} \\
%%EndAName
%}

%
%\author{Michael V. Klibanov\thanks{
%Department of Mathematics and Statistics, University of North Carolina at
%Charlotte, Charlotte, NC 28223, USA; 
%(\texttt{mklibanv@uncc.edu}, \texttt{hliu34@uncc.edu}, \texttt{dnguye70@uncc.edu}, \texttt{lnguye50@uncc.edu}) }
%\and Hui Liu\footnotemark[2] \and Dinh-Liem Nguyen\footnotemark[2]  \and 
%Loc. H. Nguyen\footnotemark[2]   }

%\emails{mklibanv@uncc.edu, hliu34@uncc.edu, dnguye70@uncc.edu, lnguye50@uncc.edu}

%\bigskip

\textbf{Keywords.} global convergence, coefficient inverse problem, inverse
medium problem, coefficient identification, multi-frequency data,
experimental data, single measurement

\bigskip

\textbf{AMS subject classification.} 35R30, 78A46, 65C20

\section{Introduction}

\label{sec:1}

%Let $G_{1},G_{2}\subset \mathbb{R}^{n}$ be two domains such that $%
%G_{1}\subset G_{2},\partial G_{1}\cap \partial G_{2}=\varnothing $ and $%
%G_{2}\subseteq \mathbb{R}^{n}$. \textbf{Let a Partial Differential Equation
%(PDE) with variable coefficients be valid in the domain }$G_{2}$\textbf{\
%Suppose that a coefficient of that PDE is unknown inside of }$G$\textbf{\
%and it is known outside of this domain. One of definitions of a Coefficient
%Inverse Problem (CIP) is that this is the problem of determining that
%coefficient inside of }$G$\textbf{\ using measurements of solutions of that
%PDE the boundary }$\partial G$\textbf{. Other definitions of CIPs are also
%possible. }

We are interested in this paper in the inverse problem of recovering the
spatially distributed dielectric constant of the Helmholtz equation from a
single boundary measurement of its solution. This inverse problem is also
called a coefficient inverse problem with a single measurement data (CIP).
CIPs are both ill-posed and highly nonlinear. Therefore, an important and
challenging question to address in a numerical treatment of a CIP is: \emph{%
How to obtain at least one point in a sufficiently small neighborhood of the
exact coefficient without any advanced knowledge of this neighborhood?} As
soon as this point is obtained, one can refine it using one of well
established techniques, such as, e.g., a Newton-like method or a
gradient-like method, see, e.g., \cite{Bak}. It is well known that a
rigorous guarantee of convergence of such a technique to the exact solution
can be obtained only if the starting point of iterations belongs to a
sufficiently small neighborhood of that solution.

We call a numerical method for a CIP \emph{approximately globally convergent 
}(globally convergent in short, or GCM) if: (1) a theorem is proved, which
claims that, under a reasonable mathematical assumption, this method
delivers at least one point in a sufficiently small neighborhood of the
exact solution without any advanced knowledge of this neighborhood, (2) the
error of the approximation of the true solution depends only on the level of
noise in the boundary data and on some discretization errors. Such an
assumption is necessary due to the well known \emph{tremendous} challenge of
the goal to develop those numerical methods for CIPs, which positively
address the question posed in the previous paragraph. This is especially
true for CIPs with single measurement data, which are known to be one of the
most challenging ones. In our particular case that assumption amounts
dropping a small term of an asymptotic expansion, and this is used only on
the zero iteration of our method, see Section \ref{sect:4.3}. Due to our
target application, the CIP considered here is a CIP with single measurement
data. We refer to \cite{BeilinaKlibanovBook,BK} for detailed discussions of
the notion of the approximate global convergence.

Our CIP has many applications in sonar imaging, geographical exploration,
medical imaging, near-field optical microscopy, nano-optics, see, e.g., \cite%
{ColtonKress:1998}. However, the target application of this paper is in
imaging of dielectric constants of antipersonnel mines and improvised
explosive devices (IEDs). We model the so-called \textquotedblleft stand
off" detection, which means that we use only backscatter data. Currently the
radar community relies only on the intensity of radar images \cite{Soum} to
see the geometrical information such as, e.g., sizes and shapes of targets.
Thus, we hope that the additional information about values of dielectric
constants of targets, which our method delivers, might lead in the future to
a better differentiation between explosives and clutter.

CIPs of wave propagation are a part of a bigger subfield, Inverse Scattering
Problems (ISPs). ISPs attract a significant attention of the scientific
community. There is a large body of the literature on imaging methods for
reconstructing geometrical information about targets, such as their shapes,
sizes and locations. We refer to, e.g.,~\cite{AmmariChowZou:sjap2016,
Ammar2004, Burge2005, Colto1996, Ito, Kirsc1998, LiLiuZou:smms2014,
LiLiuWang:jcp2014} and references therein for well-known imaging techniques
in inverse scattering such as level set methods, sampling methods, expansion
methods, and shape optimization methods.

%
%
%In this regard we refer to some direct methods which successfully
%reconstruct geometrical information of scatterers without iterations 
%\cite{Colto1996, Kirsc1998, Potth1998, Colto2005, Ito, LiLiuZou:smms2014, Sini}. 
%We also refer to \cite{AmmariKang:lnim2004, LiuSini:sjsc2009, Novikov:faa1988, Novikov1992} 
%for some other ISPs in the frequency domain. We also cite some other numerical
%methods for ISPs considered in \cite{AmmariChowZou:sjap2016,Ammari,
%Bao:ip2015, Sini2}.

The most popular approach to solve CIPs is nonlinear optimization methods
minimizing some cost functionals, see, e.g., \cite{Bak,Chavent,Engl,Gonch,T}%
. However, in general, such functionals are non-convex and might have
multiple local minima and ravines, see, e.g., Figure 1 in \cite{Scales} for
a convincing example of the latter. Therefore, a good initial guess for the
true solution is required. Such a first guess might be found by solving the
linearized problem using the Born approximation. As a results, the relative
target/background contrast is assumed to be small, while it is actually high
in many real world applications (see, e.g. the table of dielectric constants
on the web site of Honeywell with shortened link by Google
https://goo.gl/kAxtzB). This is one of the reasons for us to establish a GCM
to solve this inverse problem without knowing a good initial approximation
of the true solution.

In \cite{BeilinaKlibanovBook,BK} an approximately globally convergent method
was established for a CIP with single measurement data for a wave-like PDE
using Laplace transform. That method was verified on experimental data, see 
\cite{Klibanov:ip2010,Kuzh,TBKF1,TBKF2}, and references cited therein. The
GCM of \cite{BeilinaKlibanovBook,BK} was extended in \cite{Chow} to a CIP
for a different wave equation.

The goal of this paper is to develop a new GCM for a CIP for the Helmholtz
equation in $\mathbb{R}^{3}$ with a single measurement of multi-frequency
data. In other words, we now work in the frequency domain. Formally, the
Helmholtz equation can be obtained via the Fourier transform with respect to
time of the solution of the hyperbolic equation considered in \cite%
{BeilinaKlibanovBook,BK}. One of the reasons for us not to do so is because
the kernel of the Laplace transform, used in the previous GCM \cite%
{BeilinaKlibanovBook,BK}, decays exponentially fast and therefore some
significant information might be lost. The main difficulty in developing
this new GCM is to work with complex-valued functions where the maximum
principle, which plays an important role in the previous GCM, is no longer
applicable. We first develop the theory for the method. Next, we conduct a
numerical study for computationally simulated data. At the end, we present
one numerical example for experimental data. In fact, this is one of results
of the work \cite{Exp1} of our group.

In \cite{Exp1} the method of the current publication was successfully tested
on experimental data collected in the frequency domain. In addition, in \cite%
{Exp2} this method was tested on experimental data collected in the time
domain: after a certain preprocessing procedure, these data were ``moved" in
the frequency domain via the Fourier transform. In both works \cite%
{Exp2,Exp1}, results of the reconstructions of locations and dielectric
constants of inclusions were quite accurate. The 1D version of the method
under consideration along with its performance for experimental data can be
found in \cite{KlibanovLoc:ipi2016}.

As to the CIPs with multiple measurement, i.e., the Dirichlet-to-Neumann map
data, we mention the recent works \cite{Agal,Kab,Novikov} and references
cited therein, where reconstruction procedures are developed, which do not
require \textit{a priori} knowledge of a small neighborhood of the exact
coefficient.

The paper is organized as follows. In Section \ref{sec:2}, we state the
forward and inverse problems. In Section \ref{sec:3}, we discuss some facts
about the Lippmann-Schwinger equation. We need these facts for the analysis
of our numerical method. In Section \ref{sec:4}, we establish an integral
differential equation for the CIP and derive an initial approximation for
the target coefficient. In Section \ref{sec:5}, we present our globally
convergent numerical method and formulate the corresponding algorithm. In
Section \ref{sec:7}, we prove the main theorem of this paper: the global
convergence theorem. In addition, we discuss this theorem in Section \ref%
{sec:7} and present the main discrepancies between the theory and
computations. In Section \ref{sec:8}, we present our numerical results for
both computationally simulated and experimental data. Section \ref{sec:9} is
devoted to a short summary.

\section{Problem statement}

\label{sec:2}

\subsection{Assumptions on the coefficient $c( \mathbf{x}) $}

\label{sec:2.1}

Below $\mathbf{x}=(x_{1},x_{2},x_{3})\in \mathbb{R}^{3}.$ Let $B(R)=\{|%
\mathbf{x}|<R\}\subset \mathbb{R}^{3}$ be the ball of the radius $R>0$ with
the center at $\left\{ \mathbf{x}=0\right\} .$ Let $\Omega _{1}\Subset
\Omega \Subset B(R)$ be convex bounded domains and $\partial \Omega \in
C^{2+\alpha }$. Here and below $C^{m+\alpha }$ are Hölder spaces where $%
m\geq 0$ is an integer and $\alpha \in \left( 0,1\right) .$ Let $c(\mathbf{x}%
)$ be a function defined in $\mathbb{R}^{3}$. Throughout the paper we assume
that 
\begin{align}
c(\mathbf{x})& =1+\beta (\mathbf{x})\geq 1,\quad \mathbf{x}\in \mathbb{R}%
^{3},  \label{2.1} \\
\beta (\mathbf{x})& =0,\quad \mathbf{x}\in \mathbb{R}^{3}\setminus \Omega
_{1}.  \label{2.2}
\end{align}

%\begin{equation}
%c(x)=1+\beta (x),\beta (x)\geq 0\quad x\in \mathbb{R}^{3},  \label{2.1}
%\end{equation}%
%\begin{equation}
%\beta (x)=0\quad x\in \mathbb{R}^{3}\setminus \Omega _{1}.  \label{2.2}
%\end{equation}%
%For the purpose of the analysis, we assume throughout the paper that, in
%addition (\ref{2.1}) and (\ref{2.2}), 
%\begin{equation}
%c(x)\in C^{15}(\mathbb{R}^{3}).  \label{2.8}
%\end{equation}%
The Riemannian metric generated by the function $c(\mathbf{x})$ is 
\begin{equation}
d\tau =\sqrt{c(\mathbf{x})}\left\vert d\mathbf{x}\right\vert ,\quad |d%
\mathbf{x}|=\sqrt{(dx_{1})^{2}+(dx_{2})^{2}+(dx_{3})^{2}}.  \label{2.9}
\end{equation}%
This metric generates geodesic lines. Let be $a>R$ an arbitrary number.
Consider the plane $P_{a}=\left\{ \mathbf{x}\in \mathbb{R}%
^{3}:x_{3}=-a\right\} .$ Note that since $c\left( \mathbf{x}\right) =1$
outside of the domain $\Omega _{1},$ then geodesic lines are straight lines
outside of $\Omega _{1}.$ In particular, since our incident plane wave
propagates along the positive direction of the $x_{3}-$axis (see this
section below), we consider for $\mathbf{x}\in \left\{ x_{3}<-R\right\} $
only those parts of geodesic lines which are parallel to the $x_{3}-$axis.
Then these lines intersect the plane $P_{a}$ orthogonally. Throughout the
paper we use the following assumption about the regularity of $c(\mathbf{x})$
and geodesic lines generated by the function $c(\mathbf{x}).$

\begin{assumption}
Assume that $c\in C^{15}(\mathbb{R}^{3})$. Furthermore, for any point $%
\mathbf{x}\in \mathbb{R}^{3}$ there exists unique geodesic line $L(\mathbf{x}%
)$ connecting $\mathbf{x}$ and the plane $P_{a}$ and such that $L\left( 
\mathbf{x}\right) $ intersects the plane $P_{a}$ orthogonally. \label%
{assumption c}
\end{assumption}

The length of the geodesic line $L\left( \mathbf{x}\right) $ is%
\begin{equation}
\tau \left( \mathbf{x}\right) =\dint\limits_{L\left( \mathbf{x}\right) }%
\sqrt{c\left( \zeta \right) }d\sigma .  \label{2.91}
\end{equation}%
A sufficient condition for the validity of Assumption was found in \cite%
{Romanov:ejmc2014}, 
\begin{equation*}
\sum_{i,j=1}^{3}\frac{\partial ^{2}\sqrt{c(\mathbf{x})}}{\partial
x_{i}\partial x_{j}}\xi _{i}\xi _{j}\geq 0,\quad \mbox{for }\text{ all }%
\mathbf{x},\xi =\left( \xi _{1},\xi _{2},\xi _{3}\right) \in \mathbb{R}^{3}.
\end{equation*}

\begin{remark}
\begin{enumerate}
\item The conditions in assumption \ref{assumption c} are technical ones.\
We use them only to derive the asymptotic expansion of the function $u\left( 
\mathbf{x},k\right) $ with respect to $k$ in Section \ref{sec:4.1}. This
expansion was proved in \cite{KlibanovRomanov:ip2016}, where these
conditions were used quite essentially. However, they are not imposed in
computations, see also Section \ref{sec:7.2}.

%\textbf{I am afraid that people might not notice the point about the
%Maxwell's system. Thus, the next paragraph should be item 2 of Remark 2.1.
%So, the item 1 as above and the second part the paragraph below. Arrange
%like in Remark 4.2.}

\item In the case of propagation of electromagnetic waves, as in the
experimental data of \cite{Exp2,Exp1}, $c\left( \mathbf{x}\right) $ is the
spatially distributed dielectric constant. We assume its independence on the
wavenumber $k$ since we end up working on a small $k-$interval. In this
paper, we assume that the  total  field satisfies the Helmholtz
equation rather than the full Maxwell's system. This is reasonable when 
the dielectric constant varies so slowly that it is almost constant over the
distances of the order of the wavelength, see \cite[Chapter 13]{BornWolf}
for more details. The good accuracy of our results for experimental data of 
\cite{Exp2,Exp1} as well as in Section \ref{sec:8.7} of this paper points
towards the validity of this assumption.
\end{enumerate}
\end{remark}

\subsection{The Coefficient Inverse Problem}

\label{sec:2.2}

Below $k>0$ is the wavenumber. Consider the function $u_{0}=\exp \left(
ikx_{3}\right) $, which represents the incident plane wave propagating along
the positive direction of the $x_{3}-$axis. Let $u(\mathbf{x},k)$ be the
solution of the following problem:%
\begin{equation}
\left\{ 
\begin{array}{l}
\Delta u+k^{2}c(\mathbf{x})u=0,\quad \mathbf{x}\in \mathbb{R}^{3}, \\ 
u=u_{0}+u_{\mathrm{sc}}, \\ 
\displaystyle\lim_{r\rightarrow \infty }r\left( \partial _{r}u_{\mathrm{sc}%
}-iku_{\mathrm{sc}}\right) =0,\quad r=|\mathbf{x}|\rightarrow \infty .%
\end{array}%
\right.  \label{2.3}
\end{equation}%
The functions $u,u_{0}$ and $u_{\mathrm{sc}}$ are called the total field,
incident field and scattered field respectively.

It is well-known (see \cite[Theorem 8.7]{ColtonKress:1998}) that problem (%
\ref{2.3}) has unique solution $u(\mathbf{x},k)\in C^{2}(\mathbb{R}^{3}).$
In fact, $u(\mathbf{x},k)\in C^{2+\alpha }(\mathbb{R}^{3})$ for any $\alpha
\in \left( 0,1\right) $, see \cite[Theorem 6.17]{GilbargTrudinger:1977}. In
this paper, we consider the following problem:

\textbf{Coefficient Inverse Problem (CIP)}. \emph{Let }$\underline{k}$\emph{%
\ and }$\overline{k}$\emph{\ be two positive numbers and }$\underline{k}<%
\overline{k}$\emph{. Assume that the function }$g(\mathbf{x},k),$\emph{\ } 
\begin{equation}
g(\mathbf{x},k)=u(\mathbf{x},k),\quad \mathbf{x}\in \partial \Omega ,k\in
\lbrack \underline{k},\overline{k}]  \label{2.4}
\end{equation}%
\emph{is known where }$u(\mathbf{x},k)$\emph{\ is the solution of the
problem \eqref{2.3}. Determine the function }$c(\mathbf{x})$\emph{\ for }$%
\mathbf{x}\in \Omega .$\emph{\ }

%In our target application only the backscatter data can be measured \cite%
%{Exp2,Exp1}. Let $\Gamma \subset \partial \Omega $ be the backscattering
%part of the boundary $\partial \Omega .$ Therefore, in addition to CIP1, we
%consider
%
%\textbf{Coefficient Inverse Problem 2 (CIP2)}. \emph{Assume that the
%function }$g(\x,k)$\emph{\ is known only for points }$\x\in \Gamma ,$\emph{\ }%
%\begin{equation}
%g(\x,k)=u(\x,k)\quad \x\in \Gamma ,k\in \lbrack \underline{k},\overline{k}]
%\label{2.7}
%\end{equation}%
%\emph{Determine the function }$c(\x)$\emph{\ for }$\x\in \Omega .$\emph{\ }

In the current paper, we develop the theory of our numerical method for this
CIP. We also numerically test our method for a CIP with partial data. Since
we have only a single direction of the propagation of the incident plane
wave, our CIP is an inverse problem with the data from a single measurement
event. In this paper, we focus only on the numerical method for our CIP. As
to the question of the uniqueness of the solution, it is worth mentioning
that currently uniqueness for the single measurement case can be proved only
if the homogeneous equation in (\ref{2.3}) would be replaced with a
non-homogeneous one, in which the right hand side would be nonzero
everywhere in $\overline{\Omega }$. The proof of this fact was first
introduced in \cite{BukhKlib} by using the Carleman estimate. There were
many works of many authors since then exploring the method of \cite{BukhKlib}%
. We refer the reader to review papers \cite{Klibanov:jiipp2013,
Yamamoto:ip2009}. In the case when the right hand side of equation in (\ref%
{2.3}) has some zeros, an attempt to address the uniqueness question faces
fundamental challenges which are yet unclear how to tackle. Hence, we assume
below uniqueness for our CIP.

It is well known in the theory of Ill-Posed Problems that one should assume
the existence of the exact solution, which corresponds to the ideal,
noiseless data \cite{Bak,BeilinaKlibanovBook,T}. Hence, throughout the
paper, $c^{\ast }(\mathbf{x}) $ is that exact coefficient and we use the
superscript $^{\ast }$ to indicate functions related to $c^{\ast }( \mathbf{x%
}) .$ Naturally, we assume that assumption \ref{assumption c} is valid for
the function $c^{\ast }( \mathbf{x}) .$

\section{Using the Lippmann-Schwinger equation}

\label{sec:3}

In this section we use the Lippmann-Schwinger equation to derive some
important facts, which we need both for our algorithm and for the
convergence analysis. We use here results of Chapter 8 of the book \cite%
{ColtonKress:1998}. Denote 
\begin{equation*}
\Phi (\mathbf{x},\mathbf{y})=\frac{\exp (i\overline{k}|\mathbf{x}-\mathbf{y}%
|)}{4\pi |\mathbf{x}-\mathbf{y}|},\text{ }\mathbf{x}\neq \mathbf{y}.
\end{equation*}%
In this section the function $\beta \in C^{\alpha }\left( \mathbb{R}%
^{3}\right) $ and it also satisfies conditions (\ref{2.1}), (\ref{2.2}). The
Lippmann-Schwinger equation for the function $u\left( \mathbf{x}\right) :=u(%
\mathbf{x},\overline{k})$ is%
\begin{equation}
u(\mathbf{x})=\exp (i\overline{k}x_{3})+\overline{k}^{2}\int_{\Omega }\Phi (%
\mathbf{x},\mathbf{y})\beta (\mathbf{y})u(\mathbf{y})d\mathbf{y}.
\label{8.3}
\end{equation}%
If the function $u\left( \mathbf{x}\right) $ satisfies equation (\ref{8.3})
for $\mathbf{x}\in \Omega ,$ then we can extend it for $\mathbf{x}\in 
\mathbb{R}^{3}\setminus \Omega $ via substitution these points $\mathbf{x}$
in the right hand side of (\ref{8.3}). Hence, to solve (\ref{8.3}), it is
sufficient to find the function $u\left( \mathbf{x}\right) $ only for points 
$\mathbf{x}\in \Omega .$ Consider the linear operator $K_{\beta }$ defined
as 
\begin{equation}
(K_{\beta }u)(\mathbf{x})=\overline{k}^{2}\int_{\Omega }\Phi (\mathbf{x},%
\mathbf{y})\beta (\mathbf{y})u(\mathbf{y})d\mathbf{y}.  \label{8.30}
\end{equation}%
It follows from Theorem 8.1 of \cite{ColtonKress:1998} that 
\begin{equation}
K_{\beta }:C^{\alpha }(\overline{\Omega })\rightarrow C^{2+\alpha }(\mathbb{R%
}^{3})\text{ and }\Vert K_{\beta }u\Vert _{C^{2+\alpha }(\mathbb{R}%
^{3})}\leq B\Vert \beta \Vert _{C^{\alpha }(\overline{\Omega })}\Vert u\Vert
_{C^{\alpha }(\overline{\Omega })}.  \label{8.4}
\end{equation}%
Here and below $B=B(\beta ,\overline{k},\Omega _{1},\Omega )>0$ denotes
different constants which depend only on listed parameters. Therefore, the
operator $K_{\beta }$ maps $C^{\alpha }(\overline{\Omega })$ to $C^{\alpha }(%
\overline{\Omega })$ as a compact operator. Hence, the Fredholm theory is
applicable to equation (\ref{8.3}). Lemmata \ref{lemma 3.1} and \ref{lemma
3.2} follow from Theorem 8.3 and Theorem 8.7 of \cite{ColtonKress:1998}
respectively.

\begin{lemma}
The function $u\in C^{2+\alpha }( \mathbb{R}^{3}) $ is the solution of the
problem (\ref{2.3}) at $k=\overline{k}$ if and only if $u$ is a solution of
equation (\ref{8.3}). \label{lemma 3.1}
\end{lemma}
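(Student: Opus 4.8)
**Proof plan for Lemma 3.1 (the equivalence of the Helmholtz scattering problem at $k = \overline{k}$ and the Lippmann--Schwinger equation).**

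This is a classical equivalence, and the plan is to invoke the mapping properties already recorded in the excerpt together with the representation of the radiating solution via the fundamental solution $\Phi$. First I would establish the ``forward'' direction: suppose $u \in C^{2+\alpha}(\mathbb{R}^3)$ solves problem \eqref{2.3} at $k = \overline{k}$. Write $u = u_0 + u_{\mathrm{sc}}$ with $u_0 = \exp(i\overline{k}x_3)$, so that $u_{\mathrm{sc}}$ satisfies $\Delta u_{\mathrm{sc}} + \overline{k}^2 u_{\mathrm{sc}} = -\overline{k}^2 \beta(\mathbf{x}) u(\mathbf{x})$ in $\mathbb{R}^3$ (using $\Delta u_0 + \overline{k}^2 u_0 = 0$ and $c = 1 + \beta$), and $u_{\mathrm{sc}}$ is radiating in the Sommerfeld sense. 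Since $\beta$ is supported in $\overline{\Omega}_1 \Subset \Omega$ and $\beta u \in C^{\alpha}(\overline{\Omega})$, the unique radiating solution of this inhomogeneous Helmholtz equation is given by the volume potential $u_{\mathrm{sc}}(\mathbf{x}) = \overline{k}^2 \int_{\Omega} \Phi(\mathbf{x},\mathbf{y}) \beta(\mathbf{y}) u(\mathbf{y})\, d\mathbf{y} = (K_\beta u)(\mathbf{x})$; this is exactly the content of Theorem 8.1 of \cite{ColtonKress:1998}, whose conclusion is already quoted as \eqref{8.4}. Adding back $u_0$ gives \eqref{8.3}.

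Next I would do the ``backward'' direction: suppose $u \in C^{2+\alpha}(\mathbb{R}^3)$ (or even just $u \in C^\alpha(\overline\Omega)$, since then \eqref{8.4} promotes it automatically to $C^{2+\alpha}$) satisfies \eqref{8.3} on $\Omega$ and hence, by the extension remark preceding the lemma, on all of $\mathbb{R}^3$. Then $u = u_0 + K_\beta u$. Because $\Phi(\cdot,\mathbf{y})$ is the outgoing fundamental solution, $(\Delta + \overline{k}^2)\Phi(\cdot,\mathbf{y}) = -\delta_{\mathbf{y}}$, and differentiating under the integral (justified by the $C^{2+\alpha}$ regularity from \eqref{8.4}) gives $\Delta(K_\beta u) + \overline{k}^2 (K_\beta u) = -\overline{k}^2 \beta u$ in $\mathbb{R}^3$. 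Combined with $\Delta u_0 + \overline{k}^2 u_0 = 0$ this yields $\Delta u + \overline{k}^2 u = -\overline{k}^2 \beta u$, i.e. $\Delta u + \overline{k}^2(1+\beta) u = \Delta u + \overline{k}^2 c\, u = 0$. Finally, $u_{\mathrm{sc}} := u - u_0 = K_\beta u$ is a volume potential with compactly supported density, hence satisfies the Sommerfeld radiation condition (the kernel $\Phi$ does, uniformly, and the density has compact support). Thus $u$ solves \eqref{2.3} at $k = \overline{k}$.

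The two directions together give the equivalence. There is no real obstacle here: the only points needing a word of care are (i) checking that the density $\beta u$ has the regularity $C^\alpha(\overline\Omega)$ needed to apply the potential-theoretic mapping property \eqref{8.4} — which is immediate since $\beta \in C^\alpha$ and any solution of \eqref{8.3} inherits $C^{2+\alpha}$ smoothness from $K_\beta$ — and (ii) invoking uniqueness of the radiating solution of the inhomogeneous Helmholtz equation to identify $u_{\mathrm{sc}}$ with the volume potential in the forward direction, which is standard (Rellich's lemma / the limiting absorption principle, as in Chapter 8 of \cite{ColtonKress:1998}). Accordingly, the cleanest writeup simply cites Theorem 8.3 of \cite{ColtonKress:1998}, as the excerpt already announces, and records the short computation above for completeness.
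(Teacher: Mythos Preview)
Your proposal is correct and matches the paper's approach: the paper does not give any argument at all for Lemma~3.1, stating simply that it ``follows from Theorem~8.3 of \cite{ColtonKress:1998},'' which is precisely the reference you arrive at. Your write-up supplies the standard details behind that citation (volume-potential representation, Sommerfeld condition for $K_\beta u$, and the regularity bootstrap via \eqref{8.4}), so it is strictly more informative than the paper's own treatment while being entirely consistent with it.
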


\begin{lemma}
There exists unique solution $u\in C^{2+\alpha }( \mathbb{R}^{3}) $ of the
problem (\ref{2.3}). Consequently (Lemma \ref{lemma 3.1}), there exists
unique solution $u\in C^{2+\alpha }( \mathbb{R}^{3}) $ of the problem (\ref%
{8.3}) and these two solutions coincide. Furthermore, by the Fredholm theory
the following estimates hold%
\begin{align}
\Vert u\Vert _{C^{\alpha }(\overline{\Omega })} &\leq B\Vert \exp ( i%
\overline{k}x_{3}) \Vert _{C^{\alpha }(\overline{\Omega })},  \label{8.50} \\
\Vert u\Vert _{C^{2+\alpha }(\overline{\Omega })} &\leq B\Vert \exp ( i%
\overline{k}x_{3}) \Vert _{C^{2+\alpha }(\overline{\Omega })}.  \label{8.51}
\end{align}%
\label{lemma 3.2}
\end{lemma}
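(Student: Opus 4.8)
The plan is to invoke the Fredholm alternative for the compact operator $K_{\overline{k}\beta }$ established through \eqref{8.4} and the immediately preceding discussion, so that existence of a solution of \eqref{8.3} follows once injectivity of $(I-K_{\overline{k}\beta })$ on $C^{\alpha }(\overline{\Omega })$ is known; and injectivity will be obtained by combining Lemma \ref{lemma 3.1} with a uniqueness statement for the scattering problem \eqref{2.3}. First I would recall Theorem 8.7 of \cite{ColtonKress:1998}: under the standing hypotheses on $c$ (in particular $c\geq 1$ and $\beta$ compactly supported and H\"older continuous), the direct scattering problem \eqref{2.3} at $k=\overline{k}$ has at most one solution in $C^{2}(\mathbb{R}^{3})$ — the usual argument being Rellich's lemma plus unique continuation, applied to the difference of two solutions, which is an entire outgoing solution and hence vanishes. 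By Lemma \ref{lemma 3.1}, any element of $\ker(I-K_{\overline{k}\beta })$ in $C^{\alpha }(\overline{\Omega })$ extends (via the right-hand side of \eqref{8.3} with incident field replaced by $0$) to a $C^{2+\alpha }(\mathbb{R}^{3})$ radiating solution of the homogeneous version of \eqref{2.3}, which therefore must be zero. Thus $I-K_{\overline{k}\beta }$ is injective, and the Fredholm theory gives a bounded inverse on $C^{\alpha }(\overline{\Omega })$; applying it to \eqref{8.3} produces the unique $u\in C^{\alpha }(\overline{\Omega })$, which then lies in $C^{2+\alpha }(\mathbb{R}^{3})$ by \eqref{8.4} and coincides with the solution of \eqref{2.3} by Lemma \ref{lemma 3.1}.

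For the estimates \eqref{8.50} and \eqref{8.51}, I would write \eqref{8.3} as $u = \exp(i\overline{k}x_{3}) + K_{\overline{k}\beta }u$, so that $u = (I-K_{\overline{k}\beta })^{-1}\exp(i\overline{k}x_{3})$, and use boundedness of $(I-K_{\overline{k}\beta })^{-1}$ on $C^{\alpha }(\overline{\Omega })$ to get \eqref{8.50} with a constant $B=B(\beta ,\overline{k},\Omega _{1},\Omega )$. For \eqref{8.51}, substitute the just-obtained bound on $\|u\|_{C^{\alpha }(\overline{\Omega })}$ back into \eqref{8.3} and apply \eqref{8.4} to estimate $\|K_{\overline{k}\beta }u\|_{C^{2+\alpha }(\mathbb{R}^{3})} \leq B\|\beta\|_{C^{\alpha }(\overline{\Omega })}\|u\|_{C^{\alpha }(\overline{\Omega })}$, then add $\|\exp(i\overline{k}x_{3})\|_{C^{2+\alpha }(\overline{\Omega })}$ and absorb constants. (Strictly, the right sides of \eqref{8.50}--\eqref{8.51} are themselves finite explicit numbers depending on $\overline{k}$ and $\Omega$, so one could even replace them by pure constants $B$, but keeping the stated form costs nothing.)

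The main obstacle, such as it is, lies not in the Fredholm machinery — which is routine given \eqref{8.4} — but in making the uniqueness step airtight: one must be sure that the homogeneous solution obtained by extending a kernel element is genuinely an \emph{outgoing} (Sommerfeld-radiating) solution, since the integral $\int_{\Omega}\Phi(\mathbf{x},\mathbf{y})\beta(\mathbf{y})u(\mathbf{y})\,d\mathbf{y}$ inherits the radiation condition from the fundamental solution $\Phi$, and that the regularity assumptions on $\partial\Omega$ and on $\beta$ are exactly those under which Theorem 8.7 of \cite{ColtonKress:1998} applies. Once that is checked, everything else is bookkeeping, and the cleanest exposition is simply to cite Theorems 8.3 and 8.7 of \cite{ColtonKress:1998} for the existence/uniqueness dichotomy and derive the norm bounds from \eqref{8.4} as above.
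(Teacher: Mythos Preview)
Your proposal is correct and follows essentially the same route as the paper: the paper simply cites Theorems 8.3 and 8.7 of \cite{ColtonKress:1998} for existence/uniqueness and the Fredholm bound \eqref{8.50}, and then derives \eqref{8.51} exactly as you do, by feeding \eqref{8.50} into \eqref{8.4} to control $\|K_{\beta}u\|_{C^{2+\alpha}}$ and adding $\|\exp(i\overline{k}x_{3})\|_{C^{2+\alpha}}$ via \eqref{8.3}. Your additional discussion of why the kernel element extends to an outgoing solution is more detailed than the paper's bare citation but amounts to the same argument.
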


\begin{proof}
 We need to prove only estimate (\ref{8.51}). By (\ref{8.4})
and \eqref{8.50} 
\begin{equation}
\Vert K_{\beta }u\Vert _{C^{2+\alpha }(\overline{\Omega })}\leq B\Vert \beta
\Vert _{C^{\alpha }(\overline{\Omega })}\Vert u\Vert _{C^{\alpha }(\overline{%
\Omega })} \leq B\Vert \exp \left( i\overline{k}x_{3}\right) \Vert
_{C^{\alpha }(\overline{\Omega })}\leq B\Vert \exp \left( i\overline{k}%
x_{3}\right) \Vert _{C^{2+\alpha }(\overline{\Omega })}.  \label{8.40}
\end{equation}%
On the other hand, by (\ref{8.3}) 
\begin{equation}
\Vert u\Vert _{C^{2+\alpha }(\overline{\Omega })}\leq \Vert \exp \left( i%
\overline{k}x_{3}\right) \Vert _{C^{2+\alpha }(\overline{\Omega })}+\Vert
K_{\beta }u\Vert _{C^{2+\alpha }(\overline{\Omega })}.  \label{8.41}
\end{equation}%
Thus, estimate (\ref{8.51}) follows from (\ref{8.40}) and (\ref{8.41}). 
\end{proof}

Lemma \ref{lemma 3.3} follows from Lemmata \ref{lemma 3.1} and \ref{lemma
3.2} as well as from results of Chapter 9 of the book \cite%
{Vainberg:gbsp1989}.

\begin{lemma}
\label{lemma 3.3} For all $\mathbf{x}\in \overline{\Omega },k>0$ the
solution $u\left( \mathbf{x},k\right) $ of the problem (\ref{2.3}) is
infinitely many times differentiable with respect to $k$. Furthermore, $%
\partial _{k}^{n}u\in C^{2+\alpha }( \overline{\Omega }) $ and 
\begin{equation*}
\lim_{\epsilon \rightarrow 0}\| \partial _{k}^{n}u\left( \mathbf{x}%
,k+\epsilon \right) -\partial _{k}^{n}u\left( \mathbf{x},k\right)
\|_{C^{2+\alpha}(\overline \Omega) }=0,\emph{\ }n=0,1,...\emph{\ }
\end{equation*}
\end{lemma}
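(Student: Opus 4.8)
The plan is to carry out the argument behind Lemmata \ref{lemma 3.1} and \ref{lemma 3.2} at a variable wavenumber and to read off the $k$-dependence from the Lippmann--Schwinger equation. For $k>0$ set $\Phi _{k}(\mathbf{x},\mathbf{y})=\exp (ik|\mathbf{x}-\mathbf{y}|)/(4\pi |\mathbf{x}-\mathbf{y}|)$ and let $K_{\beta ,k}$ be the integral operator with kernel $k^{2}\Phi _{k}(\mathbf{x},\mathbf{y})\beta (\mathbf{y})$. The proofs of Theorems 8.1, 8.3 and 8.7 of \cite{ColtonKress:1998} are valid for every $k>0$, so Lemma \ref{lemma 3.1} holds with $\overline{k}$ replaced by $k$; consequently $u(\cdot ,k)$ is the unique solution in $C^{\alpha }(\overline{\Omega })$ of $(I-K_{\beta ,k})u(\cdot ,k)=\exp (ikx_{3})$, and the Fredholm alternative together with the uniqueness part of Lemma \ref{lemma 3.2} (applied at $k$) shows that $I-K_{\beta ,k}$ is boundedly invertible on $C^{\alpha }(\overline{\Omega })$ for each $k>0$.

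The one step requiring real work — and the step packaged in Chapter 9 of \cite{Vainberg:gbsp1989} — is to show that $k\mapsto K_{\beta ,k}$ is a $C^{\infty }$ map from $(0,\infty )$ into the space of bounded operators on $C^{\alpha }(\overline{\Omega })$, and that moreover every derivative $\partial _{k}^{m}K_{\beta ,k}$ maps $C^{\alpha }(\overline{\Omega })$ boundedly into $C^{2+\alpha }(\overline{\Omega })$, with operator bounds locally uniform in $k$. Here one writes $\partial _{k}^{m}\big(k^{2}\exp (ik|\mathbf{x}-\mathbf{y}|)\big)=q_{m}(k,|\mathbf{x}-\mathbf{y}|)\exp (ik|\mathbf{x}-\mathbf{y}|)$ with $q_{m}$ a polynomial; hence the kernel of $\partial _{k}^{m}K_{\beta ,k}$ is a constant multiple of the kernel of $K_{\beta ,k}$ plus kernels of the form (bounded, and in fact increasingly regular, function of $\mathbf{x},\mathbf{y}$) times $\beta (\mathbf{y})$. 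Each $k$-differentiation thus makes the operator \emph{more} regularizing rather than less, so estimate (\ref{8.4}) — whose proof uses only the Newtonian-potential structure of $\Phi $ — applies to $K_{\beta ,k}$ and to all its $k$-derivatives, and differentiation under the integral sign is justified on compact $k$-intervals by dominated convergence.

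Granting this, the conclusion is soft. Since $k\mapsto I-K_{\beta ,k}$ is a $C^{\infty }$ family of invertible operators, the usual Neumann-series argument gives that $k\mapsto (I-K_{\beta ,k})^{-1}$ is $C^{\infty }$, with $\partial _{k}(I-K_{\beta ,k})^{-1}=(I-K_{\beta ,k})^{-1}(\partial _{k}K_{\beta ,k})(I-K_{\beta ,k})^{-1}$ and Leibniz-type formulas for higher derivatives. Composing with the entire $C^{2+\alpha }(\overline{\Omega })$-valued map $k\mapsto \exp (ikx_{3})$ shows that $u(\cdot ,k)=(I-K_{\beta ,k})^{-1}\exp (ikx_{3})$ is $C^{\infty }$ in $k$ with values in $C^{\alpha }(\overline{\Omega })$; in particular each $\partial _{k}^{n}u(\cdot ,k)$ exists in $C^{\alpha }(\overline{\Omega })$ and, because $\partial _{k}^{n+1}u$ exists and is locally bounded, $k\mapsto \partial _{k}^{n}u$ is locally Lipschitz into $C^{\alpha }(\overline{\Omega })$.

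Finally I would upgrade these statements from $C^{\alpha }$ to $C^{2+\alpha }$ by the same bootstrap used in the proof of Lemma \ref{lemma 3.2}. Differentiating $u=\exp (ikx_{3})+K_{\beta ,k}u$ exactly $n$ times in $k$ gives $(I-K_{\beta ,k})\partial _{k}^{n}u=F_{n}$, where $F_{n}$ is a finite sum of terms $\partial _{k}^{j}\exp (ikx_{3})$ (each in $C^{2+\alpha }(\overline{\Omega })$) and $(\partial _{k}^{i}K_{\beta ,k})\partial _{k}^{j}u$ with $j<n$ (each in $C^{2+\alpha }(\overline{\Omega })$ by the mapping property above, since $\partial _{k}^{j}u\in C^{\alpha }(\overline{\Omega })$); hence $F_{n}\in C^{2+\alpha }(\overline{\Omega })$. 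Then $\partial _{k}^{n}u=F_{n}+K_{\beta ,k}\partial _{k}^{n}u$ with both summands in $C^{2+\alpha }(\overline{\Omega })$, so $\partial _{k}^{n}u\in C^{2+\alpha }(\overline{\Omega })$; and the same identity, combined with continuity in $k$ of $F_{n}$ and of $K_{\beta ,k}$ as maps into $C^{2+\alpha }(\overline{\Omega })$ and the continuity of $k\mapsto \partial _{k}^{n}u$ in $C^{\alpha }(\overline{\Omega })$ just obtained, yields the displayed limit. The main obstacle is the kernel analysis of the second paragraph; everything after it is routine functional analysis plus the $C^{2+\alpha }$-bootstrap via (\ref{8.4}).
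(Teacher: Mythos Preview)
Your argument is correct and is precisely the route the paper intends: the paper does not actually prove Lemma \ref{lemma 3.3} but only states that it ``follows from Lemmata \ref{lemma 3.1} and \ref{lemma 3.2} as well as from results of Chapter 9 of the book \cite{Vainberg:gbsp1989}.'' What you have written is a faithful unpacking of that citation --- the Lippmann--Schwinger framework of Lemmata \ref{lemma 3.1}--\ref{lemma 3.2} at variable $k$, the kernel analysis showing $k\mapsto K_{\beta,k}$ is $C^\infty$ into $\mathcal{L}(C^\alpha,C^{2+\alpha})$, and the resolvent/bootstrap steps --- so there is nothing to compare.
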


Let the function $\chi \in C^{2}(\mathbb{R}^{3})$ be such that%
\begin{equation}
\chi \left( \mathbf{x}\right) =\left\{ 
\begin{array}{ll}
1 & \text{ if }\mathbf{x}\in \Omega _{1}, \\ 
\text{between }0\text{ and }1 & \text{ if }\mathbf{x}\in \Omega \setminus
\Omega _{1}, \\ 
0 & \text{ if }\mathbf{x}\in \mathbb{R}^{3}\setminus \Omega .%
\end{array}%
\right.  \label{8.60}
\end{equation}%
The existence of such functions $\chi $ is well known from the Real Analysis
course. While Lemmata \ref{lemma 3.1} and \ref{lemma 3.2} are formulated for
the case when the function $\beta \left( x\right) $ is real valued, we now
consider the case of a complex valued analog of this function. We will need
this for our algorithm. Consider a complex valued function $\rho (\mathbf{x}%
)\in C^{\alpha }(\overline{\Omega })$. Let $\widehat{\rho }\left( \mathbf{x}%
\right) =\chi \left( \mathbf{x}\right) \rho \left( \mathbf{x}\right) $. Then 
\begin{equation}
\widehat{\rho }\in C^{\alpha }(\mathbb{R}^{3}),\quad \widehat{\rho }(\mathbf{%
x})=0\quad \text{ in }\mathbb{R}^{3}\setminus \Omega .  \label{8.7}
\end{equation}%
We have the lemma.

\begin{lemma}
\label{lemma 3.4} Let $\beta ^{\ast }(\mathbf{x})=c^{\ast }\left( \mathbf{x}%
\right) -1$. Let $u^{\ast }(\mathbf{x},\overline{k})$ be the solution of the
problem (\ref{2.3}) in which $c\left( \mathbf{x}\right) $ is replaced with $%
c^{\ast }\left( \mathbf{x}\right) $. Consider equation (\ref{8.3}) in which $%
\beta \left( \mathbf{x}\right) $ is replaced with $\widehat{\rho }\left( 
\mathbf{x}\right) ,$%
\begin{equation}
u_{\rho }(\mathbf{x},\overline{k})=\exp (i\overline{k}x_{3})+\overline{k}%
^{2}\int_{\Omega }\Phi \left( \mathbf{x},\mathbf{y}\right) \widehat{\rho }(%
\mathbf{y})u_{\rho }(\mathbf{y},\overline{k})d\mathbf{y},\quad \mathbf{x}\in
\Omega .  \label{8.8}
\end{equation}%
Then there exists a sufficiently small number $\theta ^{\ast }=\theta ^{\ast
}(\beta ^{\ast },\overline{k},\chi ,\Omega _{1},\Omega )\in \left(
0,1\right) $ depending only on listed parameters such that if $\Vert \rho
-\beta ^{\ast }\Vert _{C^{\alpha }(\overline{\Omega })}\leq \theta $ and $%
\theta \in (0,\theta ^{\ast })$, then equation (\ref{8.8}) has unique
solution $u_{\rho }\in C^{\alpha }(\overline{\Omega })$. Furthermore, the
function $u_{\rho }\in C^{2+\alpha }(\mathbb{R}^{3})$ and 
\begin{equation}
\Vert u_{\rho }(\mathbf{x},\overline{k})-u^{\ast }(\mathbf{x},\overline{k}%
)\Vert _{C^{2+\alpha }(\overline{\Omega })}\leq Z^{\ast }\theta ,
\label{8.9}
\end{equation}%
where the constant $Z^{\ast }=Z^{\ast }(\beta ^{\ast },\overline{k},\chi
,\Omega _{1},\Omega )>0$ depends only on the listed parameters.
\end{lemma}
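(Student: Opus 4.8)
The plan is to treat equation (\ref{8.8}) as a perturbation of the Lippmann-Schwinger equation (\ref{8.3}) for $c^\ast$, and to use the invertibility of $I-K_{\beta^\ast}$ together with a Neumann-series / contraction argument. First I would record that, since $c^\ast$ satisfies (\ref{2.1})--(\ref{2.2}), Lemma \ref{lemma 3.2} guarantees that $I-K_{\beta^\ast}$ is boundedly invertible on $C^\alpha(\overline\Omega)$; denote its inverse norm by $M^\ast = M^\ast(\beta^\ast,\overline k,\Omega_1,\Omega)$. Next, writing $\widehat\rho = \chi\rho$ and $\widehat{\beta^\ast}=\chi\beta^\ast=\beta^\ast$ (the last equality because $\beta^\ast$ is supported in $\Omega_1$ where $\chi\equiv1$), the bound (\ref{8.4}) applied to $\beta-\beta^\ast$ replaced by $\widehat\rho-\beta^\ast$ gives $\|K_{\widehat\rho}-K_{\beta^\ast}\| \le B\|\chi\|_{C^\alpha}\,\|\rho-\beta^\ast\|_{C^\alpha(\overline\Omega)} \le B'\theta$. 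Hence for $\theta$ small enough — say $\theta^\ast$ chosen so that $B'\theta^\ast M^\ast < 1/2$ — the operator $I-K_{\widehat\rho}$ is also invertible on $C^\alpha(\overline\Omega)$ with inverse norm at most $2M^\ast$, so (\ref{8.8}) has a unique solution $u_\rho\in C^\alpha(\overline\Omega)$.

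For the estimate (\ref{8.9}), I would subtract (\ref{8.3}) with $\beta=\beta^\ast$ from (\ref{8.8}):
\begin{equation*}
(I-K_{\widehat\rho})(u_\rho - u^\ast) = (K_{\widehat\rho}-K_{\beta^\ast})u^\ast,
\end{equation*}
so that, using the uniform inverse bound and (\ref{8.4}) once more,
\begin{equation*}
\|u_\rho - u^\ast\|_{C^\alpha(\overline\Omega)} \le 2M^\ast \,B'\theta\, \|u^\ast\|_{C^\alpha(\overline\Omega)} \le 2M^\ast B' B\,\theta\,\|\exp(i\overline k x_3)\|_{C^\alpha(\overline\Omega)},
\end{equation*}
the last step by (\ref{8.50}) applied to $c^\ast$. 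This controls the $C^\alpha$ difference by a constant times $\theta$. To upgrade to $C^{2+\alpha}(\mathbb{R}^3)$, I would feed this back into the integral equations: from (\ref{8.8}) and (\ref{8.3}) we have $u_\rho - u^\ast = K_{\widehat\rho}u_\rho - K_{\beta^\ast}u^\ast = K_{\widehat\rho}(u_\rho - u^\ast) + (K_{\widehat\rho}-K_{\beta^\ast})u^\ast$, and now apply the regularity bound (\ref{8.4}) (which lands in $C^{2+\alpha}(\mathbb{R}^3)$) to each term, using the $C^\alpha$ estimate just obtained for $u_\rho - u^\ast$, the $C^\alpha$ smallness of $\widehat\rho-\beta^\ast$, and (\ref{8.51}) for $\|u^\ast\|_{C^{2+\alpha}}$. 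That produces (\ref{8.9}) with $Z^\ast$ depending only on the listed parameters, and simultaneously shows $u_\rho\in C^{2+\alpha}(\mathbb{R}^3)$ after extending it off $\Omega$ by the right-hand side of (\ref{8.8}).

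The main obstacle is a bookkeeping one rather than a conceptual one: one must be careful that the threshold $\theta^\ast$ and all the constants genuinely depend only on $\beta^\ast,\overline k,\chi,\Omega_1,\Omega$ and not on $\rho$ itself. This is fine because every use of (\ref{8.4}) is applied either to the fixed function $\beta^\ast$ or to the difference $\widehat\rho - \beta^\ast$ whose $C^\alpha$ norm is $\le B\|\chi\|_{C^\alpha}\theta$, and the invertibility of $I-K_{\widehat\rho}$ is obtained by a perturbation of the fixed invertible operator $I-K_{\beta^\ast}$, so its inverse norm is bounded uniformly for all $\rho$ in the $\theta^\ast$-ball. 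The one subtlety worth flagging explicitly is that $\widehat\rho=\chi\rho$ need not vanish outside $\Omega_1$ — only outside $\Omega$ — so $\widehat\rho$ plays the role of a (complex-valued) coefficient supported in $\Omega$, which is exactly the generality in which Chapter 8 of \cite{ColtonKress:1998}, and hence (\ref{8.4}), still applies; the complex-valuedness itself causes no difficulty since the Lippmann-Schwinger / Fredholm machinery used in (\ref{8.4}) and Lemma \ref{lemma 3.2} does not invoke the maximum principle.
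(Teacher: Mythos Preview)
Your proposal is correct and follows essentially the same approach as the paper: both use the bounded invertibility of $I-K_{\beta^\ast}$ on $C^\alpha(\overline\Omega)$ (from Lemma~\ref{lemma 3.2} and Fredholm theory) together with the smallness of $K_{\widehat\rho}-K_{\beta^\ast}=K_{\chi(\rho-\beta^\ast)}$ via (\ref{8.4}) to obtain existence, uniqueness, and the $C^\alpha$ difference bound, and then bootstrap to $C^{2+\alpha}$ by re-applying (\ref{8.4}) to the integral identity for $u_\rho-u^\ast$. The only cosmetic differences are that the paper phrases the existence step as a contraction for $T A_{\rho-\beta^\ast}$ (with $T=(I-K_{\beta^\ast})^{-1}$) rather than as a Neumann-series perturbation of $I-K_{\widehat\rho}$, and writes the difference identity as $(I-K_{\beta^\ast})\widetilde u = A_{\rho-\beta^\ast}(u_\rho)$ (right-hand side in terms of $u_\rho$) instead of your $(I-K_{\widehat\rho})(u_\rho-u^\ast)=(K_{\widehat\rho}-K_{\beta^\ast})u^\ast$; these are equivalent rearrangements yielding the same bounds.
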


\begin{proof} 
Below $Z^{\ast }$ denotes different positive constants
depending on the above parameters. We have $\beta ^{\ast }\left( \mathbf{x}%
\right) =\chi \left( \mathbf{x}\right) \beta ^{\ast }\left( \mathbf{x}%
\right) +\left( 1-\chi \left( \mathbf{x}\right) \right) \beta ^{\ast }\left( 
\mathbf{x}\right) $. Since, by (\ref{2.2}), the function $\beta ^{\ast
}\left( \mathbf{x}\right) =0$ outside of the domain $\Omega _{1}$, then (\ref%
{8.60}) implies that $(1-\chi \left( \mathbf{x}\right) )\beta ^{\ast }\left( 
\mathbf{x}\right) \equiv 0.$ Hence, $\beta ^{\ast }\left( \mathbf{x}\right)
=\chi (\mathbf{x})\beta ^{\ast }\left( \mathbf{x}\right) $ and $(\widehat{%
\rho }-\beta ^{\ast })(\mathbf{x})=\chi (\mathbf{x})(\rho -\beta ^{\ast })(%
\mathbf{x})$. Using notation (\ref{8.30}), we rewrite equation (\ref{8.8})
in the following equivalent form 
\begin{equation}
(I-K_{\beta ^{\ast }})u_{\rho }=\exp (i\overline{k}x_{3})+A_{\rho -\beta
^{\ast }}(u_{\rho }),\quad \mathbf{x}\in \Omega ,  \label{8.10}
\end{equation}%
where the linear operator $A_{\rho -\beta ^{\ast }}:C^{\alpha }\left( 
\overline{\Omega }\right) \rightarrow C^{\alpha }\left( \overline{\Omega }%
\right) $ is given by 
\begin{equation}
A_{\rho -\beta ^{\ast }}\left( u_{\rho }\right) \left( \mathbf{x}\right) =%
\overline{k}^{2}\int_{\Omega }\Phi \left( \mathbf{x},\mathbf{y}\right) \chi
\left( \mathbf{y}\right) (\rho -\beta ^{\ast })(\mathbf{y})u_{\rho }(\mathbf{%
y},\overline{k})d\mathbf{y},\quad \mathbf{x}\in \Omega .  \label{8.100}
\end{equation}%
We have for any function $p\in C^{\alpha }(\overline{\Omega })$ 
\begin{equation}
\Vert A_{\rho -\beta ^{\ast }}\left( p\right) \Vert _{C^{\alpha }(\overline{%
\Omega })}=\Vert \overline{k}^{2}\int_{\Omega }\Phi (\mathbf{x},\mathbf{y}%
)\chi (\mathbf{y})(\rho -\beta ^{\ast })(\mathbf{y})p\left( \mathbf{y}%
\right) d\mathbf{y}\Vert _{C^{2+\alpha }(\overline{\Omega })}\leq Z^{\ast
}\theta \Vert p\Vert _{C^{\alpha }(\overline{\Omega })}.  \label{8.101}
\end{equation}%
Therefore, 
\begin{equation}
\Vert A_{\rho -\beta ^{\ast }}\Vert \leq Z^{\ast }\theta .  \label{8.11}
\end{equation}%
It follows from Lemma \ref{lemma 3.2} and the Fredholm theory that the
operator $(I-K_{\beta ^{\ast }})$ has a bounded inverse operator $%
T=(I-K_{\beta ^{\ast }})^{-1}:C^{\alpha }(\overline{\Omega })\rightarrow
C^{\alpha }(\overline{\Omega })$ and 
\begin{equation}
\left\Vert T\right\Vert \leq Z^{\ast }.  \label{8.111}
\end{equation}%
Hence, using (\ref{8.10}), we obtain%
\begin{equation}
u_{\rho }=T(\exp (i\overline{k}x_{3}))+(TA_{\rho -\beta ^{\ast }})(u_{\rho
}).  \label{8.12}
\end{equation}%
It follows from (\ref{8.11}) and (\ref{8.111}) that there exists a
sufficiently small number $\theta ^{\ast }\in (0,1)$ depending on $\beta
^{\ast }$, $\overline{k},$ $\chi ,\Omega _{1},\Omega $ such that if $\theta
\in (0,\theta ^{\ast })$ and $\Vert \rho -\beta ^{\ast }\Vert _{C^{\alpha }(%
\overline{\Omega })}\leq \theta ,$ then the operator $(TA_{\rho -\beta
^{\ast }}):C^{\alpha }(\overline{\Omega })\rightarrow C^{\alpha }(\overline{%
\Omega })$ is a contraction mapping. This implies uniqueness and existence
of the solution $u_{\rho }\in C^{\alpha }(\overline{\Omega })$ of equation (%
\ref{8.12}), which is equivalent to equation (\ref{8.8}). Also, 
\begin{equation}
\Vert u_{\rho }\Vert _{C^{\alpha }(\overline{\Omega })}\leq Z^{\ast }\Vert
\exp (i\overline{k}x_{3})\Vert _{C^{\alpha }(\overline{\Omega })},
\label{8.13}
\end{equation}%
with a different constant $Z^{\ast }$. Furthermore, by (\ref{8.4}) the
function $u_{\rho }\in C^{2+\alpha }(\overline{\Omega }).$

We now prove estimate (\ref{8.9}). Let $\widetilde{u}(\mathbf{x})=u_{\rho }(%
\mathbf{x},\overline{k})-u^{\ast }(\mathbf{x},\overline{k})$. Since $%
(I-K_{\beta ^{\ast }})u^{\ast }=\exp (\mathrm{i}\overline{k}x_{3}),$ we
obtain the following analog of (\ref{8.10}) 
\begin{equation}
(I-K_{\beta ^{\ast }})\widetilde{u}=A_{\rho -\beta ^{\ast }}(u_{_{\rho
}}),\quad \mathbf{x}\in \Omega .  \label{8.14}
\end{equation}%
Hence, $\widetilde{u}=(TA_{\rho -\beta ^{\ast }})(u_{\rho })$. Hence, (\ref%
{8.11}) and (\ref{8.13}) lead to 
\begin{equation}
\Vert \widetilde{u}\Vert _{C^{\alpha }(\overline{\Omega })}\leq Z^{\ast
}\theta .  \label{8.15}
\end{equation}%
Next, we rewrite (\ref{8.14}) as%
\begin{equation}
\widetilde{u}=K_{\beta ^{\ast }}\widetilde{u}+A_{\rho -\beta ^{\ast
}}(u_{\rho }),\quad \mathbf{x}\in \Omega .  \label{8.16}
\end{equation}%
By (\ref{8.4}) and (\ref{8.100}) the right hand side of equation (\ref{8.16}%
) belongs to the space $C^{2+\alpha }(\overline{\Omega }).$ Hence, using (%
\ref{8.4}), (\ref{8.101}), (\ref{8.13}) and (\ref{8.15}), we obtain from (%
\ref{8.16}) that $\Vert \widetilde{u}\Vert _{C^{2+\alpha }(\overline{\Omega }%
)}\leq Z^{\ast }\theta .$ 
\end{proof}

\section{Integral differential equation formulation}

\label{sec:4}

In this section, we eliminate the target coefficient $c(\mathbf{x})$ from
the governing equation \eqref{2.3}. This process leads to an integral
differential equation. Solving this equation is equivalent to solving our
CIP. Note that this approach is different from locally convergent methods
with iterative optimization processes which typically rely on least-squares
formulation. Furthermore, under some mathematical assumptions, we rigorously
derive a good initial approximation for $c(\mathbf{x}).$ The analysis relies
on the high frequency asymptotic behavior of the total wave in the next
section.

\subsection{The asymptotics with respect to $k\rightarrow \infty $}

\label{sec:4.1}

Let $\tau (\mathbf{x})$ be the function defined in (\ref{2.91}). It was
proven in \cite[Theorem 1 and its consequence (4.26)]{KlibanovRomanov:ip2016}
that for $\mathbf{x}\in B(R)$ the asymptotic expansion of the function $u(%
\mathbf{x},k)$ with respect to $k$ is 
\begin{align}
& u(\mathbf{x},k)=A(\mathbf{x})\exp (ik\tau (\mathbf{x}))(1+O(1/k)),\quad
k\rightarrow \infty ,\mathbf{x}\in B(R),  \label{4.17} \\
& |O(1/k)|\leq \frac{B_{1}}{k},  \label{300}
\end{align}%
where the function $A(\mathbf{x})>0$ is defined in $\overline{B}(R)$ and the
number $B_{1}=B_{1}(B(R),c)>0$ depends only on listed parameters. By (\ref%
{4.17}) and (\ref{300}), given a coefficient $c(\mathbf{x})$, one can choose
a number $\kappa _{0}=\kappa _{0}(B(R),c)>0$ such that 
\begin{equation*}
u(\mathbf{x},k)\neq 0,\quad \text{ for all }\mathbf{x}\in \overline{B}%
(R),k\geq \kappa _{0}.
\end{equation*}%
We note that by (\ref{4.17}), (\ref{300}) we need only an estimate from the
below for the number $\kappa _{0}$ rather than its exact value. The
development of the theory of this paper would become much more complicated
if we would work with the numbers $\kappa _{0},\underline{k},\overline{k},$
which would depend on the function $c$. Hence, from now on, we assume that
these numbers are the same for all functions $c$ which we consider here.
This assumption is supported from the Physics standpoint by our study of
experimental data \cite{Exp2,Exp1}. Thus, below numbers $\overline{k},%
\underline{k},\kappa _{0},m_{0}$ are the same for all functions $c\left( 
\mathbf{x}\right) $ considered here and also 
\begin{align}
& \overline{k}>\underline{k}\geq \kappa _{0}>1,\quad k\in \lbrack \underline{%
k},\overline{k}],  \label{4.2} \\
& \min_{\overline{\Omega }}\min_{k\geq \kappa _{0}}|u(\mathbf{x},k)|\geq
m_{0}>0.  \label{4.200}
\end{align}

\subsection{The tail function and some auxiliary functions}

\label{sec:4.2}

We obtain from (\ref{4.2}), (\ref{4.200}) and a straightforward calculation
that the vector $\nabla u(\mathbf{x},\overline{k})/u(\mathbf{x},\overline{k}%
) $, $\mathbf{x}\in B(R)$, is $\func{curl}$ free. Hence, there exists a
function $V(\mathbf{x})$ such that 
\begin{equation}
\nabla V(\mathbf{x})=\frac{\nabla u(\mathbf{x},\overline{k})}{u(\mathbf{x},%
\overline{k})},\quad \mathbf{x}\in B(R).  \label{4.3}
\end{equation}%
This implies 
\begin{equation*}
\exp (-V(\mathbf{x}))\left( u(\mathbf{x},\overline{k})\nabla V(\mathbf{x}%
)-\nabla u(\mathbf{x},\overline{k})\right) =0,\quad \mathbf{x}\in B(R).
\end{equation*}%
Therefore, 
\begin{equation*}
\nabla \left( \exp (-V(\mathbf{x}))u(\mathbf{x},\overline{k})\right)
=0,\quad \mathbf{x}\in B(R).
\end{equation*}%
Hence, 
\begin{equation*}
u(\mathbf{x},\overline{k})=C\exp (V(\mathbf{x})),\quad \mathbf{x}\in B(R)
\end{equation*}%
where $C$ is a constant. The smoothness of the function $u(\mathbf{x},%
\overline{k})$ and (\ref{4.3}) imply that $V(\mathbf{x})\in C^{2+\alpha
}(B(R))$. Choosing $C=1$, we have found a function $V\in C^{2+\alpha }(B(R))$
such that 
\begin{equation}
u(\mathbf{x},\overline{k})=\exp (V(\mathbf{x})),\quad \mathbf{x}\in B(R).
\label{4.6}
\end{equation}%
%
%
%
%
%
%
%The function $V$ is actually the logarithm of the function $u(\x,\overline{k}%
%) $, $V\left( \x\right) =\log u(\x,\overline{k}).$ 

\begin{remark}
\begin{enumerate}
\item The function $V(\mathbf{x})$, $\mathbf{x} \in B(R)$, is determined
uniquely up to the addition of a constant $2 \pi n i$, $n \in \mathbb{Z}$.
We can choose $n = 0$. This choice determines the function $V(\mathbf{x})$
uniquely. We name it the tail function.

\item The choice of $n$ does not effect the results of our reconstruction
method since we only use the derivatives of $V(\mathbf{x})$ (see Algorithm %
\ref{algorithm 1}) instead of $V(\mathbf{x}).$
\end{enumerate}
\end{remark}

Define the function $v(\mathbf{x},k),$ 
\begin{equation}
v(\mathbf{x},k)=-\int_{k}^{\overline{k}}\frac{\partial _{k}u(\mathbf{x}%
,\kappa )}{u(\mathbf{x},\kappa )}d\kappa +V(\mathbf{x}),\quad \mathbf{x}\in
B(R),k\in \lbrack \underline{k},\overline{k}].  \label{4.7}
\end{equation}

\begin{lemma}
For all $k\in \lbrack \underline{k},\overline{k}]$, the function $v(\mathbf{x%
},k)$ belongs to $C^{2+\alpha }(\overline{B( R)})$. Moreover 
\begin{align}
&\hspace*{1cm} u(\mathbf{x},k)=\exp (v(\mathbf{x},k)),  \label{4.8} \\
&\Delta v(\mathbf{x},k)+(\nabla v(\mathbf{x},k))^{2}=-k^{2}c(\mathbf{x}),
\label{4.9}
\end{align}
for all $\mathbf{x} \in B(R)$. \label{lemma 4.1}
\end{lemma}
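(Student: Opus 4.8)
The plan is to establish the three assertions of the lemma in the order stated, exploiting that the integral term in the definition \eqref{4.7} of $v$ is just an antiderivative in $k$ of $\partial _{k}u/u$.

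\textbf{Regularity of $v$.} By Lemma \ref{lemma 3.3} — applied on $\overline{B(R)}$, which is legitimate because $c\equiv 1$ outside $\Omega _{1}$, the incident field $\exp (ikx_{3})$ is entire and $u_{\mathrm{sc}}$ is smooth up to and beyond $\partial B(R)$, so $u(\cdot ,k)$ and each $\partial _{k}^{n}u(\cdot ,k)$ belong to $C^{2+\alpha }(\overline{B(R)})$ and depend continuously on $k$ in that norm — and by the choice of $\kappa _{0}$ recorded after \eqref{4.17}--\eqref{4.200}, which guarantees $u(\mathbf{x},\kappa )\neq 0$, hence $|u(\mathbf{x},\kappa )|\geq m_{0}>0$, on the compact set $\overline{B(R)}\times \lbrack \underline{k},\overline{k}]$, the map $\kappa \mapsto \partial _{\kappa }u(\cdot ,\kappa )/u(\cdot ,\kappa )$ is a continuous $C^{2+\alpha }(\overline{B(R)})$-valued function. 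Therefore its Riemann integral over $[k,\overline{k}]$ lies in $C^{2+\alpha }(\overline{B(R)})$, and adding $V\in C^{2+\alpha }(B(R))$ from \eqref{4.6} shows $v(\cdot ,k)\in C^{2+\alpha }(\overline{B(R)})$ for every $k\in \lbrack \underline{k},\overline{k}]$. Differentiating \eqref{4.7} in $k$ (the integrand being smooth in $k$ by Lemma \ref{lemma 3.3}) gives
\[
\partial _{k}v(\mathbf{x},k)=\frac{\partial _{k}u(\mathbf{x},k)}{u(\mathbf{x},k)},\qquad \mathbf{x}\in B(R),\ k\in \lbrack \underline{k},\overline{k}].
\]

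\textbf{Proof of \eqref{4.8}.} Rather than selecting a branch of $\log u$ for complex-valued $u$, I would show directly that $u\exp (-v)$ is constant in $k$. Using the formula for $\partial _{k}v$ just obtained,
\[
\partial _{k}\!\left( u(\mathbf{x},k)\exp (-v(\mathbf{x},k))\right) =\exp (-v(\mathbf{x},k))\left( \partial _{k}u(\mathbf{x},k)-u(\mathbf{x},k)\,\partial _{k}v(\mathbf{x},k)\right) =0 .
\]
Hence $u(\mathbf{x},k)\exp (-v(\mathbf{x},k))$ does not depend on $k$. At $k=\overline{k}$ the integral in \eqref{4.7} vanishes, so $v(\mathbf{x},\overline{k})=V(\mathbf{x})$, and \eqref{4.6} yields $u(\mathbf{x},\overline{k})\exp (-V(\mathbf{x}))=1$. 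Therefore $u(\mathbf{x},k)\exp (-v(\mathbf{x},k))\equiv 1$ on $B(R)\times \lbrack \underline{k},\overline{k}]$, which is \eqref{4.8}. For \eqref{4.9} I would substitute $u=\exp (v)$ into the Helmholtz equation $\Delta u+k^{2}c(\mathbf{x})u=0$ from \eqref{2.3}: since $v(\cdot ,k)\in C^{2+\alpha }$, we compute $\nabla u=u\nabla v$ and $\Delta u=\nabla \cdot (u\nabla v)=u\bigl(\Delta v+(\nabla v)^{2}\bigr)$, where $(\nabla v)^{2}:=\sum_{j=1}^{3}(\partial _{x_{j}}v)^{2}$, and dividing by the nonvanishing factor $u$ gives $\Delta v+(\nabla v)^{2}+k^{2}c(\mathbf{x})=0$ on $B(R)$, i.e.\ \eqref{4.9}.

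The main obstacle is the first step: making rigorous that the Riemann integral of a continuous $C^{2+\alpha }(\overline{B(R)})$-valued function is again in $C^{2+\alpha }(\overline{B(R)})$, and that differentiation in $k$ under the integral sign is justified. Both follow from the joint-regularity statement of Lemma \ref{lemma 3.3} (smoothness in $k$, $C^{2+\alpha }$-norm continuity in $k$) together with the uniform lower bound $|u|\geq m_{0}>0$; the passage from $\overline{\Omega }$, where Lemma \ref{lemma 3.3} is stated, to $\overline{B(R)}$ is harmless because $c\equiv 1$ and all data are smooth there. Once these technical points are secured, the remainder consists of elementary manipulations with the exponential.
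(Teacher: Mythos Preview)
Your proof is correct and follows essentially the same route as the paper: regularity via Lemma~\ref{lemma 3.3} and the lower bound \eqref{4.200}, then the observation that $\partial_k\bigl(u\exp(-v)\bigr)=0$ combined with the endpoint identity $v(\cdot,\overline{k})=V$ and \eqref{4.6} to get \eqref{4.8}, and finally substitution into the Helmholtz equation for \eqref{4.9}. You are somewhat more explicit than the paper about the Banach-space-valued Riemann integral and about the passage from $\overline{\Omega}$ to $\overline{B(R)}$ in Lemma~\ref{lemma 3.3}, but these are elaborations rather than a different argument.
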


\begin{proof} 
The fact that $v\in C^{2+\alpha }(\overline{B\left( R\right) 
})$ follows immediately from (\ref{4.2}), (\ref{4.200}), (\ref{4.7}) and
Lemma \ref{lemma 3.3}. Now, we prove only (\ref{4.8}) and (\ref{4.9}).
Differentiating (\ref{4.7}) with respect to $k$ and then multiplying both
sides of the resulting equation by $\exp (-v(\mathbf{x},k))$, we obtain 
\begin{equation*}
\partial _{k}\left( \exp (-v(\mathbf{x},k)u(\mathbf{x},k))\right) =0,\quad 
\mathbf{x}\in B(R).
\end{equation*}%
Hence, $u(\mathbf{x},k)={C}$ $\exp (v(\mathbf{x},k)),$ where ${C}={C}(%
\mathbf{x})$ is independent on $k$. In particular, letting $k=\overline{k}$
and using \eqref{4.6}, we have $C=1$. This implies \eqref{4.8}.

Substituting the function $u(\mathbf{x},k)$ from \eqref{4.8} in the
Helmholtz equation in \eqref{2.3}, we obtain \eqref{4.9}. 
\end{proof}

%Using (\ref{4.8}), we obtain 
%\begin{equation}
%\nabla v(\x,k)=\frac{\nabla u(\x,k)}{u(\x,k)}.  \label{4.10}
%\end{equation}%
%%By (\ref{4.2}) the denominator in (\ref{4.10}) does not equal to zero in $%
%%\overline{B\left( R\right) }.$
% Taking the divergence of (\ref{4.10}), we obtain (\ref{4.9}). $\square $

We next eliminate the function $c(\mathbf{x})$ from equation \eqref{4.9}. To
this end, define the function $q(\mathbf{x},k),$ 
\begin{equation}
q(\mathbf{x},k)=\partial _{k}v(\mathbf{x},k)=\frac{\partial _{k}u(\mathbf{x}%
,k)}{u(\mathbf{x},k)},\quad \mathbf{x}\in B(R).  \label{4.13}
\end{equation}%
Differentiating equation (\ref{4.9}) with respect to $k$, we obtain 
\begin{equation}
\Delta q(\mathbf{x},k)+2\nabla v(\mathbf{x},k)\nabla q(\mathbf{x},k)=\frac{1%
}{k}\left( \Delta v(\mathbf{x},k)+(\nabla v(\mathbf{x},k))^{2}\right) ,\quad 
\mathbf{x}\in B(R).  \label{4.14}
\end{equation}%
Plugging the function $v(\mathbf{x},k)$ of (\ref{4.7}) into (\ref{4.14}), we
obtain Lemma \ref{lemma q}. Keeping in mind our numerical method, it is
convenient to consider in this lemma that $\mathbf{x}\in \Omega $ rather
than the above $\mathbf{x}\in B\left( R\right) .$

\begin{lemma}[integral differential equation]
The function $q\left( \mathbf{x},k\right) $ satisfies the following
nonlinear integral differential equation 
\begin{multline}
k\Delta q(\mathbf{x},k)+2k\nabla \left( -\int_{k}^{\overline{k}}q(\mathbf{x}%
,\kappa )d\kappa +V(\mathbf{x})\right) \nabla q(\mathbf{x},k) \\
=2\Delta \left( -\int_{k}^{\overline{k}}q(\mathbf{x},\kappa )d\kappa +V(%
\mathbf{x})\right) +2\left[ \nabla \left( -\int_{k}^{\overline{k}}q(\mathbf{x%
},\kappa )d\kappa +V(\mathbf{x})\right) \right] ^{2}  \label{4.15}
\end{multline}%
for all $\mathbf{x}\in \Omega $ and $k\in \lbrack \underline{k},\overline{k}%
].$ Denoting $\psi \left( \mathbf{x},k\right) =\partial _{k}g\left( \mathbf{x%
},k\right) /g(\mathbf{x},k)$, we have the following boundary condition 
\begin{equation}
q\left( \mathbf{x},k\right) =\psi (\mathbf{x},k),\quad \mathbf{x}\in
\partial \Omega ,k\in \lbrack \underline{k},\overline{k}].  \label{4.16}
\end{equation}%
\label{lemma q}
\end{lemma}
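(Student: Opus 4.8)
The plan is to obtain (\ref{4.15}) directly from equation (\ref{4.14}), which was already derived above by differentiating (\ref{4.9}) in $k$, after rewriting the function $v(\mathbf{x},k)$ entirely in terms of $q$ and the tail function $V$. By the definition (\ref{4.13}) the integrand $\partial_k u(\mathbf{x},\kappa)/u(\mathbf{x},\kappa)$ in (\ref{4.7}) is precisely $q(\mathbf{x},\kappa)$, so (\ref{4.7}) reads
\begin{equation*}
v(\mathbf{x},k)=-\int_k^{\overline{k}}q(\mathbf{x},\kappa)\,d\kappa+V(\mathbf{x}),\qquad \mathbf{x}\in B(R),\ k\in[\underline{k},\overline{k}].
\end{equation*}
Moreover $q(\cdot,k)\in C^{2+\alpha}(\overline{\Omega})$ for every $k\in[\underline{k},\overline{k}]$: this follows from (\ref{4.13}), Lemma \ref{lemma 3.3} (which gives $u,\partial_k u\in C^{2+\alpha}(\overline{\Omega})$) and the lower bound (\ref{4.200}) on $|u|$, so that the quotient $q=\partial_k u/u$ is of class $C^{2+\alpha}$; in particular $\nabla q$ and $\Delta q$ make sense, and the $\mathbf{x}$-derivatives may be carried under the $\kappa$-integral, giving $\nabla v=\nabla\big(-\int_k^{\overline{k}}q\,d\kappa+V\big)$ and $\Delta v=\Delta\big(-\int_k^{\overline{k}}q\,d\kappa+V\big)$.

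Substituting these expressions for $v$, $\nabla v$, $\Delta v$ into (\ref{4.14}) and multiplying the resulting identity by $k$ produces exactly the integral differential equation (\ref{4.15}). Since $\Omega\Subset B(R)$, an equation valid for all $\mathbf{x}\in B(R)$ holds in particular for all $\mathbf{x}\in\Omega$, which is the range asserted in the lemma.

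The boundary condition (\ref{4.16}) is even more immediate. On $\partial\Omega$ we have $u(\mathbf{x},k)=g(\mathbf{x},k)$ by (\ref{2.4}); both sides are smooth in $k$ by Lemma \ref{lemma 3.3}, so $\partial_k u(\mathbf{x},k)=\partial_k g(\mathbf{x},k)$ on $\partial\Omega$, and dividing by $u=g$ (nonzero on $\overline{\Omega}$ by (\ref{4.200})) yields $q(\mathbf{x},k)=\partial_k u/u=\partial_k g/g=\psi(\mathbf{x},k)$ for $\mathbf{x}\in\partial\Omega$, $k\in[\underline{k},\overline{k}]$. The whole argument is bookkeeping, so I do not anticipate a real obstacle; the two points worth a line of justification are the passage of the spatial derivatives under the $\kappa$-integral — handled by the uniform-in-$\kappa$ $C^{2+\alpha}$ regularity of $q$ noted above — and the differentiation of the boundary identity $u=g$ in $k$, which is legitimate because $u$ is $C^\infty$ in $k$ by Lemma \ref{lemma 3.3}.
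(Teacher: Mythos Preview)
Your proposal is correct and follows exactly the route the paper takes: the paper's entire argument is the single sentence ``Plugging the function $v(\mathbf{x},k)$ of (\ref{4.7}) into (\ref{4.14}), we obtain Lemma \ref{lemma q},'' together with the definition of $\psi$ for the boundary condition. Your version is simply a more careful write-up of the same substitution, with the added (and welcome) justification of the $C^{2+\alpha}$ regularity of $q$ and of differentiating under the integral and along $\partial\Omega$.
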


%Our globally convergent algorithm is based on the equation for $q(\x,k)$ in
%Lemma \ref{lemma q}. 
Now, solving our CIP is equivalent to solving problem \eqref{4.14}--%
\eqref{4.16}. To do so, we iteratively approximate both functions $q(\mathbf{%
x}, k)$ and $V(\mathbf{x}),$ using the predictor-corrector scheme. Here
approximations for $V(\mathbf{x})$ are used as predictors and approximations
for $q(\mathbf{x}, k)$ are used as correctors. We first start from an
approximation $V_{0}(\mathbf{x})$ for the tail function $V(\mathbf{x})$.

\subsection{The initial approximation of the tail function}

\label{sect:4.3}

In this section we introduce a reasonable mathematical assumption mentioned
in Section \ref{sec:1}. The idea is to find an initial approximation for $V$
using the asymptotic behavior of $u(\mathbf{x},k)$ as $k\rightarrow \infty $%
. Let $c=c^{\ast }$ and let condition (\ref{4.2}) be fulfilled. We set in (%
\ref{4.17}) $u\left( \mathbf{x},k\right) =u^{\ast }\left( \mathbf{x}%
,k\right) $ and then ignore the term $O\left( 1/k\right) $ for $k\geq 
\overline{k}.$ It follows from (\ref{300}) that we can do this uniformly for
all $\mathbf{x}\in B(R).$ Hence, we obtain the following approximate formula%
\begin{equation}
u^{\ast }\left( \mathbf{x},k\right) =A^{\ast }\left( \mathbf{x}\right) \exp
(ik\tau ^{\ast }\left( \mathbf{x}\right) ),\quad \mathbf{x}\in B(R),k\geq 
\overline{k}.  \label{4.170}
\end{equation}%
Define the function $v^{\ast }(\mathbf{x},k)$ for $k\geq \overline{k}$ as in %
\eqref{4.7}$.$ Then, the result in Lemma \ref{lemma 4.1} is still valid for $%
k\geq \overline{k}.$ 
%Consider now the dependence of the function $v^{\ast }(\x, k)$ on $k$ for $k\geq 
%\overline{k}.$ To do this, we set the integral in (\ref{4.7}) to be equal to
%zero for $k\geq \overline{k}$ and then set $v^{\ast }\left( x,k\right)
%:=V^{\ast }\left( x,k\right) $ for $k\geq \overline{k}.$ 
Since by (\ref{4.8}) $v^{\ast }\left( \mathbf{x},k\right) =\log u^{\ast }(%
\mathbf{x},k),$ we set, using (\ref{4.170}), 
\begin{equation}
v^{\ast }\left( \mathbf{x},k\right) =\ln A^{\ast }(\mathbf{x})+ik\tau ^{\ast
}(\mathbf{x}),\quad \mathbf{x}\in B(R),k\geq \overline{k}.  \label{4.18}
\end{equation}%
Since $\overline{k}$ is sufficiently large, we can drop the term $\ln
A^{\ast }(\mathbf{x})$ in (\ref{4.18}) and obtain 
\begin{equation}
v^{\ast }\left( \mathbf{x},k\right) =ik\tau ^{\ast }(\mathbf{x}),\quad 
\mathbf{x}\in B(R),k\geq \overline{k}.  \label{4.19}
\end{equation}%
Next, since by \eqref{4.7} $V^{\ast }\left( \mathbf{x}\right) =v^{\ast }(%
\mathbf{x},\overline{k}),$ then (\ref{4.19}) implies that 
\begin{equation}
V^{\ast }\left( \mathbf{x}\right) =i\overline{k}\tau ^{\ast }(\mathbf{x}).
\label{500}
\end{equation}%
Next, since $q\left( \mathbf{x},k\right) =\partial _{k}v(\mathbf{x},k),$ we
obtain from (\ref{4.19}) 
\begin{equation}
q^{\ast }(\mathbf{x},k)=i\tau ^{\ast }(\mathbf{x}),\quad \mathbf{x}\in
B(R),k\geq \overline{k}.  \label{4.190}
\end{equation}%
Substituting functions $V^{\ast }( \mathbf{x}) $  from (\ref{500}) and $%
q^{\ast }(\mathbf{x},\overline{k})$ from (\ref{4.190}) in problem (\ref{4.15}%
)--\eqref{4.16} at $k:=\overline{k}$, we obtain via a straightforward
calculation 
\begin{equation}
\left\{ 
\begin{array}{rcll}
\Delta \tau ^{\ast } & = & 0 & \text{in }\Omega , \\ 
\tau ^{\ast } & = & -i\psi ^{\ast }(\mathbf{x},\overline{k}) & \mbox{on }%
\partial \Omega .%
\end{array}%
\right.  \label{4.110}
\end{equation}%
Thus, we have obtained the Dirichlet boundary value problem (\ref{4.110})
for the Laplace equation with respect to the function $\tau ^{\ast }\left(
x\right) $. Recall that $\partial \Omega \in C^{2+\alpha }$ and that the
function $\psi ^{\ast }(x,\overline{k})\in C^{2+\alpha }(\partial \Omega ).$
By the Schauder Theorem \cite{Lad}, there exists unique solution $\tau
^{\ast }\in C^{2+\alpha }(\overline{\Omega })$ of the problem (\ref{4.110}).

In practice, however, we have the non-exact boundary data $\psi \left( 
\mathbf{x},k\right) $ rather than the exact data $\psi ^{\ast }(\mathbf{x}%
,k).$ Thus, we set the first approximation $V_{0}\left( \mathbf{x}\right) $
for the tail function $V\left( \mathbf{x}\right) $ as 
\begin{equation}
V_{0}\left( \mathbf{x}\right) =i\overline{k}\tau (\mathbf{x}),  \label{106}
\end{equation}%
where the function $\tau \left( \mathbf{x}\right) $ is the $C^{2+\alpha }(%
\overline{\Omega })$ solution of the following analog of problem (\ref{4.110}%
) 
\begin{equation}
\left\{ 
\begin{array}{rcll}
\Delta \tau & = & 0 & \text{in }\Omega , \\ 
\tau & = & -i\psi (\mathbf{x},\overline{k}) & \mbox{on }\partial \Omega .%
\end{array}%
\right.  \label{1060}
\end{equation}%
Using (\ref{4.9}), we define the first approximation $c_{0}\left( \mathbf{x}%
\right) $ for the exact coefficient $c^{\ast }\left( \mathbf{x}\right) $ as 
\begin{equation}
c_{0}\left( \mathbf{x}\right) =-\frac{1}{\overline{k}^{2}}\left( \Delta
V_{0}(\mathbf{x})+(\nabla V_{0}(\mathbf{x}))^{2}\right) .  \label{501}
\end{equation}

\textbf{\ }

\begin{lemma}
\label{lemma 4.3} Assume that relations (\ref{4.170})--(\ref{4.190}), (\ref%
{106}) and (\ref{1060}) are valid. Then there exists a constant $C=C\left(
\Omega \right) >0$ depending only on the domain $\Omega $ such that%
\begin{equation}
\Vert V_{0}-V^{\ast }\Vert _{C^{2+\alpha }( \overline{\Omega }) }\leq C%
\overline{k}\Vert \psi ( x,\overline{k}) -\psi ^{\ast }( x,\overline{k})
\Vert _{C^{2+\alpha }( \partial \Omega ) }.  \label{107}
\end{equation}
Consequently, 
\begin{equation}
\|c_0 - c^*\|_{C^{\alpha}(\overline \Omega)} \leq \frac{C}{\overline k}\Vert
\psi ( x,\overline{k}) -\psi ^{\ast }( x,\overline{k}) \Vert _{C^{2+\alpha
}( \partial \Omega ).}  \label{4.22}
\end{equation}
%
%where notations (\ref{4.191}), (\ref{4.192}) are used.
\end{lemma}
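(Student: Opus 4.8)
The plan is to establish \eqref{107} from the Schauder a priori estimate for the Dirichlet problem for the Laplace equation, and then to obtain \eqref{4.22} as a purely algebraic consequence of \eqref{107} together with the definitions \eqref{501} and \eqref{4.9}. By \eqref{106} and \eqref{500} we have $V_0=i\overline{k}\,\tau$ and $V^{\ast}=i\overline{k}\,\tau^{\ast}$, where $\tau$ and $\tau^{\ast}$ solve the Dirichlet problems \eqref{1060} and \eqref{4.110}. Hence the difference $w:=V_0-V^{\ast}=i\overline{k}(\tau-\tau^{\ast})$ is harmonic in $\Omega$ (since $\Delta\tau=\Delta\tau^{\ast}=0$), and on $\partial\Omega$ its trace equals $\overline{k}\bigl(\psi(\mathbf{x},\overline{k})-\psi^{\ast}(\mathbf{x},\overline{k})\bigr)$, because there $\tau=-i\psi$ and $\tau^{\ast}=-i\psi^{\ast}$. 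Applying the Schauder estimate for the Laplace--Dirichlet problem on the $C^{2+\alpha}$ domain $\Omega$ — the estimate invoked just before the statement of the lemma to solve \eqref{1060} — bounds $\|w\|_{C^{2+\alpha}(\overline{\Omega})}$ by $C(\Omega)$ times the $C^{2+\alpha}(\partial\Omega)$ norm of its boundary trace, which is exactly \eqref{107}.

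For \eqref{4.22}, note that under the assumptions \eqref{4.170}--\eqref{4.190} equation \eqref{4.9} holds at $k=\overline{k}$ with $v=V^{\ast}$, i.e. $c^{\ast}=-\overline{k}^{-2}\bigl(\Delta V^{\ast}+(\nabla V^{\ast})^{2}\bigr)$; subtracting this from \eqref{501} gives
\[
c_0-c^{\ast}=-\frac{1}{\overline{k}^{2}}\Bigl(\Delta(V_0-V^{\ast})+(\nabla V_0)^{2}-(\nabla V^{\ast})^{2}\Bigr),
\]
and I would factor the nonlinear part as $(\nabla V_0)^{2}-(\nabla V^{\ast})^{2}=\nabla(V_0-V^{\ast})\cdot\nabla(V_0+V^{\ast})$. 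Taking $C^{\alpha}(\overline{\Omega})$ norms, using that $C^{\alpha}(\overline{\Omega})$ is a Banach algebra, the elementary bounds $\|\Delta(V_0-V^{\ast})\|_{C^{\alpha}},\ \|\nabla(V_0-V^{\ast})\|_{C^{\alpha}}\le\|V_0-V^{\ast}\|_{C^{2+\alpha}}$, the bound $\|\nabla(V_0+V^{\ast})\|_{C^{\alpha}}\le\|V_0\|_{C^{2+\alpha}}+\|V^{\ast}\|_{C^{2+\alpha}}$ with each summand controlled by a Schauder estimate for \eqref{1060}, \eqref{4.110}, and finally inserting \eqref{107}, one collects the terms and extracts the prefactor $\overline{k}^{-2}$ to reach \eqref{4.22}.

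The Schauder step is routine; the delicate part is the quadratic term $(\nabla V_0)^{2}-(\nabla V^{\ast})^{2}$, the only genuinely nonlinear contribution, where one must keep precise track of the powers of $\overline{k}$ coming from $V_0=i\overline{k}\tau$ and $V^{\ast}=i\overline{k}\tau^{\ast}$ against the $\overline{k}^{-2}$ normalization in \eqref{501}. It is relevant here that $\tau^{\ast}$ is a fixed function determined by $c^{\ast}$, and that the data error $\|\psi(\mathbf{x},\overline{k})-\psi^{\ast}(\mathbf{x},\overline{k})\|_{C^{2+\alpha}(\partial\Omega)}$ is small, so that $\|\tau\|_{C^{2+\alpha}(\overline{\Omega})}$ stays comparable to $\|\tau^{\ast}\|_{C^{2+\alpha}(\overline{\Omega})}$ and the constant in \eqref{4.22} does not deteriorate with the (large) size of $V_0$ and $V^{\ast}$. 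I would also record, as the remarks following the lemma do, that this is the only place where the high-frequency asymptotics \eqref{4.170}--\eqref{4.190} is used.
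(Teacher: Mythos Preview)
Your proposal is correct and follows essentially the same route as the paper: form the difference $\tau-\tau^{\ast}$ (equivalently $V_0-V^{\ast}=i\overline{k}(\tau-\tau^{\ast})$), apply the Schauder estimate for the Laplace--Dirichlet problem to get \eqref{107}, then subtract the defining relations \eqref{501} and \eqref{4.9} at $k=\overline{k}$ and factor $(\nabla V_0)^2-(\nabla V^{\ast})^2$ to obtain \eqref{4.22}. Your discussion of the quadratic term and the bookkeeping of powers of $\overline{k}$ is in fact more explicit than the paper, which simply states ``Estimate \eqref{4.22} follows from \eqref{107} and \eqref{502}.''
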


\begin{proof} 
Note that (\ref{4.110}) follows from (\ref{4.15}) and (\ref%
{4.170})--(\ref{4.190}). Denote $\widetilde{\tau }\left( x\right) =\tau
\left( x\right) -\tau ^{\ast }\left( x\right) .$ Then (\ref{4.110}) and (\ref%
{1060}) imply that 
\begin{equation*}
\left\{ 
\begin{array}{rcll}
\Delta \widetilde{\tau } & = & 0 & \text{in }\Omega , \\ 
\widetilde{\tau } & = & -i(\psi -\psi ^{\ast })(\mathbf{x},\overline{k}) & %
\mbox{on }\partial \Omega .%
\end{array}%
\right.
\end{equation*}
Hence, the Schauder Theorem \cite{Lad} leads to (\ref{107}). Using ( \ref%
{4.9}) and (\ref{501}), we obtain 
\begin{equation}
c_{0}(\mathbf{x})-c^{\ast }(\mathbf{x})=\frac{-1}{\overline{k}^{2}}(\Delta
V_{0}(\mathbf{x})-\Delta V^{\ast }(\mathbf{x})+(\nabla V_{0}(\mathbf{x}%
)-\nabla V^{\ast }(\mathbf{x}))(\nabla V_{0}(\mathbf{x})+\nabla V^{\ast }(%
\mathbf{x}))),  \label{502}
\end{equation}%
for all $\mathbf{x}$ in $\Omega .$ Estimate \eqref{4.22} follows from %
\eqref{107} and (\ref{502}). 
\end{proof}

\begin{remark}
\begin{enumerate}
\item Formulas (\ref{4.170})--(\ref{4.190}) form the first part of our
reasonable mathematical assumption mentioned in Section \ref{sec:1}. The
second part is that we assume that numbers $k_{0},\underline{k},\overline{k}$
are the same for all functions $c$ we consider here, see (\ref{4.2}), (\ref%
{4.200}) and the paragraph above them. We point out that formulas (\ref%
{4.170})--(\ref{4.190}) are used only on the first iteration of our method.
However, we do not use them on follow up iterations.

\item It follows from estimate \eqref{4.22} that the first approximation $%
c_{0}\left( \mathbf{x}\right) $ of the exact coefficient $c^{\ast }\left( 
\mathbf{x}\right) $ belongs to a small neighborhood of $c^{\ast }\left( 
\mathbf{x}\right) $, provided that the error in the data is sufficiently
small. Therefore, the global convergence property is achieved just at the
start of our iterative process.

%\textrm{Lemma \ref{lemma 4.3} means that the error of the approximation
%of the exact tail function $V^{\ast }$ by the function $V_{0}$ depends only
%on the error in boundary data }$\psi \left( \x,\overline{k}^{\ast }\right) .$%
%\textrm{\ Let $\delta >0$ be the level of the error in the boundary data at $%
%k:=\overline{k}^{\ast }$. Hence, if $\delta $ is sufficiently small, then,
%by (\ref{4.23}), }$\left\Vert V_{0}-V^{\ast }\right\Vert _{C^{2+\alpha
%}\left( \overline{\Omega }\right) }$\textrm{\ is also sufficiently small.
%Thus, we have obtained the tail function $V_{0}$ in a sufficiently small
%neighborhood of the exact tail function $V^{\ast }$. Furthermore, in doing
%so, we have not used any \emph{a priori} knowledge about a sufficiently
%small neighborhood of the function $V^{\ast }.$ The smallness of that
%neighborhood depends only on the level of the error in the boundary data. }
%
%\item \textrm{Therefore, the global convergence property is achieved just at
%the start of our iterative process. It is achieved due to two factors. The
%first factor is the elimination of the unknown coefficient from equation (%
%\ref{4.9}) and obtaining the integral differential equation (\ref{4.15}).
%The second factor is dropping the term $O\left( 1/k\right) $ in\ (\ref{4.17}%
%) and (\ref{4.18}), which leads, in turn to (\ref{4.19})-(\ref{4.110}). }

\item We will prove in Theorem \ref{thm main} that the global convergence
property is still valid after certain further iterations. 
%Still, our numerical experience shows that we need to do more
%iterations to obtain more stable numerical results. 
The latter property is important because it is well-known that the more data
are used, up to a certain point, the better reconstruction is obtained, also
see section \ref{sec:7.2}.

%On a deeper sense, these approximations are introduced
%because the problem of constructing globally convergent numerical methods
%for CIPs is well known to be a \emph{tremendously challenging} one. }
%
\end{enumerate}

\label{remark 4.2}
\end{remark}

\section{The algorithm}

\label{sec:5}

We first consider the discretization of problem \eqref{4.15}--\eqref{4.16}
with respect to the wavenumber $k$. Then, we prove the well-posedness of the
resulting problem. In the last part of this section, we propose the globally
convergent numerical algorithm.

\subsection{The discretization of the interval of wavenumbers}

\label{sec:5.1} To find $q$ and $V$ from problem~\eqref{4.15}--\eqref{4.16}
using an iterative process, we consider a discretization of this problem
with respect to the wavenumber $k$. We divide the interval $[\underline{k},%
\overline{k}]$ into $N$ subintervals with the uniform step size as follows 
%\begin{equation}
%h=k_{n-1}-k_{n} = \frac{\overline{k}-\underline{k}}{N},\quad n = 1,\dots
%,N  \label{3}
%\end{equation} 
\begin{equation}
\underline{k}=k_{N}<k_{N-1}<\dots <k_{1}<k_{0}=\overline{k},\quad
h=k_{n-1}-k_{n}=\frac{\overline{k}-\underline{k}}{N},\quad n=1,\dots ,N
\label{3}
\end{equation}%
We approximate the function $q\left( \mathbf{x},k\right) $ as a piecewise
constant function with respect to $k$, 
\begin{equation}
q_{0}(\mathbf{x})\equiv 0,\quad q_{n}(\mathbf{x}):=q(\mathbf{x},k_{n}),\quad
k\in \lbrack k_{n},k_{n-1}),n=1,\dots ,N.  \label{4.21}
\end{equation}%
Now, we define 
\begin{equation}
Q_{n}(\mathbf{x})=\sum_{j=0}^{n}q_{j}(\mathbf{x}).  \label{4.210}
\end{equation}%
Then 
\begin{equation*}
\int_{k_{n}}^{\overline{k}}q(\mathbf{x},\kappa )d\kappa =hq_{n}(\mathbf{x})+h%
{Q_{n-1}}(\mathbf{x}),\quad \mathbf{x}\in \Omega ,n=1,\dots ,N.
\end{equation*}%
Equation (\ref{4.15}) becomes%
\begin{multline}
k_{n}\Delta q_{n}+2k_{n}\nabla (-hq_{n}-hQ_{n-1}+V_{n-1})\nabla
q_{n}=2\Delta (-hq_{n}-h{Q_{n-1}}+V_{n-1}) \\
+2(\nabla (-hq_{n}-hQ_{n-1}+V_{n-1}))^{2},\quad \mathbf{x}\in \Omega .
\label{4.20}
\end{multline}%
We use here $V_{n-1}$ instead of $V$ since we will update the tail function
in our algorithm. By Lemma \ref{lemma 3.3} $q_{n}\left( \mathbf{x}\right)
=q_{n-1}\left( \mathbf{x}\right) +O(h),h\rightarrow 0.$ So, assuming that
the discretization step size $h$ is sufficiently small, we replace in (\ref%
{4.20}) $\nabla V_{n-1}\nabla q_{n}$ with $\nabla V_{n-1}\nabla q_{n-1}$.
Using again the fact that $h$ is sufficiently small, we also drop in (\ref%
{4.20}) the nonlinear term $-2hk_{n}\left( \nabla q_{n}\right) ^{2}$ as well
as the terms $-2\Delta (hq_{n}),-2\nabla (hq_{n}).$ However, we do not drop
terms $\nabla (hQ_{n-1}),\Delta (hQ_{n-1})$ since $Q_{n-1}$ is a sum of
other terms, see (\ref{4.210}). These approximations enable us to simplify
the analysis as well as the computations of our method. 
%We are doing these for the convenience of the proof of the global
%convergence theorem. 
Using (\ref{4.20}), we obtain 
\begin{multline}
k_{n}\Delta q_{n}-2k_{n}h\nabla Q_{n-1}\nabla q_{n}= \\
-2k_{n}\nabla V_{n-1}\nabla q_{n-1}+2\Delta (-h{Q_{n-1}}+V_{n-1})+2(\nabla
(-hQ_{n-1}+V_{n-1}))^{2},\quad \mathbf{x}\in \Omega ,n=1,\dots ,N.
\label{4.29}
\end{multline}%
Let $\psi _{n}(\mathbf{x})=\psi \left( \mathbf{x},k_{n}\right) .$ Then (\ref%
{4.16}) and (\ref{4.21}) lead to the following Dirichlet boundary condition
for the function $q_{n}(\mathbf{x})$ 
\begin{equation}
q_{n}\left( \mathbf{x}\right) =\psi _{n}(\mathbf{x}),\quad \mathbf{x}\in
\partial \Omega .  \label{4.30}
\end{equation}

Thus, we have obtained the Dirichlet boundary value problem (\ref{4.29})--%
\eqref{4.30} for an elliptic equation for the function $q_{n}\left( \mathbf{x%
}\right) $. We now prove the solvability of this problem.

\subsection{The well-posedness of problem (\protect\ref{4.29})--(\protect\ref%
{4.30})}

\label{sec:6}

In this section we study the existence and uniqueness of the solution $%
q_{n}\in C^{2+\alpha }(\overline{\Omega })$ of the Dirichlet boundary value
problem (\ref{4.29})--(\ref{4.30}). In the case of real-valued functions,
existence and uniqueness of the solution follow from \cite[Theorem 6.14]%
{GilbargTrudinger:1977}. The proof is based on the maximum principle and
Schauder Theorem \cite{GilbargTrudinger:1977,Lad}. However, complex-valued
functions cause some additional difficulties, since the maximum principle
does not work in this case. Still, we can get our desired results for the
case of complex-valued functions using the assumption that the norm $%
\left\Vert h\nabla Q_{n-1}\right\Vert _{C^{1+\alpha }}$ is sufficiently
small. Keeping in mind the convergence analysis in the next section, it is
convenient to consider here problem (\ref{4.29})--(\ref{4.30}) in a more
general form 
\begin{equation}
\left\{ 
\begin{array}{rcll}
\Delta w-\nabla p\nabla w & = & f(\mathbf{x}) & \mbox{in }\Omega , \\ 
w & = & \mu (\mathbf{x}) & \mbox{on }\partial \Omega .%
\end{array}%
\right.  \label{5.1}
\end{equation}

%\begin{center}
%\textbf{ATTN}
%\end{center}
%
%\textbf{I have introduce here the number }$\gamma \in \left( 0,1\right) .$%
%\textbf{\ Similarly in Corollary.\ See an explanation just above Theorem
%6.1. I respectively changed formulations of Lemma 5.1 and Corollary 5.1 and
%a little bit changed the proof of Lemma 5.1.}

\begin{lemma}
Assume that in (\ref{5.1}) all functions are complex valued ones and also
that $p\in C^{1+\alpha }(\overline{\Omega }),f\in C^{\alpha }(\overline{%
\Omega }),\mu \in C^{2+\alpha }(\partial \Omega ).$ Consider a number $%
\gamma \in \left( 0,1\right) .$ Then there exists a constant $%
C_{1}=C_{1}(\Omega )>0$ and a sufficiently small number $\sigma =\sigma
(C_{1},\gamma )\in \left( 0,1\right) $ such that if $C_{1}\sigma \leq \gamma 
$ and $\Vert \nabla p\Vert _{C^{\alpha }(\overline{\Omega })}\leq \sigma $,
then problem \eqref{5.1} has a unique solution $w\in C^{2+\alpha }(\overline{%
\Omega })$. Moreover, 
\begin{equation}
\left\Vert w\right\Vert _{C^{2+\alpha }(\overline{\Omega })}\leq \frac{C_{1}%
}{1-\gamma }\left( \left\Vert f\right\Vert _{C^{\alpha }(\overline{\Omega }%
)}+\left\Vert \mu \right\Vert _{C^{2+\alpha }\left( \partial \Omega \right)
}\right) .  \label{5.2}
\end{equation}%
\label{lem 5.1}
\end{lemma}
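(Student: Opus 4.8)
The plan is to reformulate \eqref{5.1} as a fixed-point equation using the solution operator of the pure Laplacian Dirichlet problem, treating the first-order term $\nabla p\nabla w$ as a perturbation. First I would invoke the Schauder theory \cite{GilbargTrudinger:1977,Lad}: since $\partial\Omega\in C^{2+\alpha}$, for every $F\in C^{\alpha}(\overline\Omega)$ and $\mu\in C^{2+\alpha}(\partial\Omega)$ there is a unique $w\in C^{2+\alpha}(\overline\Omega)$ solving $\Delta w=F$ in $\Omega$, $w=\mu$ on $\partial\Omega$, together with the estimate $\|w\|_{C^{2+\alpha}(\overline\Omega)}\le C_1(\|F\|_{C^{\alpha}(\overline\Omega)}+\|\mu\|_{C^{2+\alpha}(\partial\Omega)})$ for a constant $C_1=C_1(\Omega)>0$. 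Denote the associated solution operator by $S$; then $w$ solves \eqref{5.1} if and only if
\begin{equation*}
w=S\big(f+\nabla p\nabla w\big),
\end{equation*}
where on the right $S$ is applied with boundary data $\mu$. Define the affine map $\Psi(w)=S(f+\nabla p\nabla w)$ on $C^{2+\alpha}(\overline\Omega)$.

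Next I would show $\Psi$ is a contraction when $\|\nabla p\|_{C^{\alpha}(\overline\Omega)}\le\sigma$ is small. For $w_1,w_2$, the difference $\Psi(w_1)-\Psi(w_2)=S(\nabla p\,\nabla(w_1-w_2))$ has zero boundary data, so the Schauder estimate gives
\begin{equation*}
\|\Psi(w_1)-\Psi(w_2)\|_{C^{2+\alpha}(\overline\Omega)}\le C_1\|\nabla p\,\nabla(w_1-w_2)\|_{C^{\alpha}(\overline\Omega)}\le C_1 c_\Omega\,\sigma\,\|w_1-w_2\|_{C^{2+\alpha}(\overline\Omega)},
\end{equation*}
using the Banach-algebra property of $C^{\alpha}(\overline\Omega)$ and the continuous embedding $C^{1+\alpha}\hookrightarrow C^{\alpha}$ (absorbing any multiplicative constant into $C_1$). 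Choosing $\sigma=\sigma(C_1,\gamma)$ small enough that $C_1\sigma\le\gamma<1$ makes $\Psi$ a contraction, so by the Banach fixed-point theorem \eqref{5.1} has a unique solution $w\in C^{2+\alpha}(\overline\Omega)$.

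For the estimate \eqref{5.2}, apply the Schauder bound to $w=\Psi(w)$ itself:
\begin{equation*}
\|w\|_{C^{2+\alpha}(\overline\Omega)}\le C_1\big(\|f\|_{C^{\alpha}(\overline\Omega)}+\|\nabla p\,\nabla w\|_{C^{\alpha}(\overline\Omega)}+\|\mu\|_{C^{2+\alpha}(\partial\Omega)}\big)\le C_1\big(\|f\|_{C^{\alpha}(\overline\Omega)}+\|\mu\|_{C^{2+\alpha}(\partial\Omega)}\big)+\gamma\|w\|_{C^{2+\alpha}(\overline\Omega)},
\end{equation*}
and absorb the last term on the left to obtain $\|w\|_{C^{2+\alpha}(\overline\Omega)}\le\frac{C_1}{1-\gamma}(\|f\|_{C^{\alpha}(\overline\Omega)}+\|\mu\|_{C^{2+\alpha}(\partial\Omega)})$. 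The main subtlety — rather than a deep obstacle — is bookkeeping: the constant $C_1$ from the Schauder estimate, the embedding constant $C^{1+\alpha}\hookrightarrow C^{\alpha}$, and the algebra constant of $C^{\alpha}$ must all be merged into a single $C_1=C_1(\Omega)$ before choosing $\sigma$, so that $\sigma$ depends only on $C_1$ and $\gamma$ as claimed; one must be careful that this merging does not secretly make $\sigma$ depend on $f$ or $\mu$, which it does not since those constants are data-independent.
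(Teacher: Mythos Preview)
Your proposal is correct and follows essentially the same approach as the paper: both define an affine map on $C^{2+\alpha}(\overline{\Omega})$ via the Schauder solution operator for the Laplacian with the first-order term treated as a perturbation, show it is a contraction when $C_1\sigma\le\gamma$, and then absorb the $\gamma\|w\|_{C^{2+\alpha}}$ term to obtain the estimate. Your remark about merging the algebra and embedding constants into a single $C_1(\Omega)$ is exactly what the paper does implicitly by letting $C_1$ denote different $\Omega$-dependent constants.
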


\begin{proof} Below in this paper $C_{1}=C_{1}\left( \Omega \right) >0$
denotes different positive constants depending only on the domain $\Omega .$
Let the complex-valued function $v\in C^{2+\alpha }(\overline{\Omega }).$
Consider the following Dirichlet boundary value problem with respect to the
function $U$:%
\begin{equation}
\left\{ 
\begin{array}{rcll}
\Delta U & = & \nabla p\nabla v+f(\mathbf{x}) & \mbox{in }\Omega , \\ 
U & = & \mu (\mathbf{x}) & \mbox{on }\partial \Omega .%
\end{array}%
\right.  \label{5.3}
\end{equation}%
The Schauder Theorem implies that there exists unique solution $U\in
C^{2+\alpha }(\overline{\Omega })$ of the problem (\ref{5.3}) and 
\begin{equation}
\Vert U\Vert _{C^{2+\alpha }(\overline{\Omega })}\leq C_{1}\left( \sigma
\Vert v\Vert _{C^{2+\alpha }(\overline{\Omega })}+\left\Vert f\right\Vert
_{C^{\alpha }(\overline{\Omega })}+\left\Vert \mu \right\Vert _{C^{2+\alpha
}\left( \partial \Omega \right) }\right) .  \label{5.4}
\end{equation}%
Hence, for each fixed pair $f\in C^{\alpha }(\overline{\Omega }),\mu \in
C^{2+\alpha }\left( \partial \Omega \right) $, we can define a map which
sends the function $v\in C^{2+\alpha }(\overline{\Omega })$ in the solution $%
U\in C^{2+\alpha }(\overline{\Omega })$ of the problem (\ref{5.3}), say $%
U=S_{f,\mu }\left( v\right) .$ Hence, $S_{f,\mu }:C^{2+\alpha }(\overline{%
\Omega })\rightarrow C^{2+\alpha }(\overline{\Omega }).$ Since the operator $%
S_{f,\mu }$ is affine and $C_{1}\sigma <1,$ (\ref{5.4}) implies that $%
S_{f,\mu }$ is a contraction mapping. Let the function $w=S_{f,\mu }\left(
w\right) $ be its unique fixed point. Then the function $w\in C^{2+\alpha }(%
\overline{\Omega })$ is the unique solution of the problem (\ref{5.1}). In
addition, by (\ref{5.4}) 
\begin{equation*}
\left\Vert w\right\Vert _{C^{2+\alpha }(\overline{\Omega })}\leq C_{1}\left(
\sigma \left\Vert w\right\Vert _{C^{2+\alpha }(\overline{\Omega }%
)}+\left\Vert f\right\Vert _{C^{\alpha }(\overline{\Omega })}+\left\Vert \mu
\right\Vert _{C^{2+\alpha }\left( \partial \Omega \right) }\right) ,
\end{equation*}%
which implies (\ref{5.2}). 
\end{proof}

The result bellow follows immediately from Lemma \ref{lem 5.1}, (\ref{4.29})
and (\ref{4.30}).

\begin{corollary}
Consider problem (\ref{4.29})--(\ref{4.30}) in which $V_{n-1}$ is replaced
with $V_{n,i-1}$ and $q_{n}$ is replaced with $q_{n,i}$, where $n=1,\dots ,N$
and $i=1,\dots ,m.$ Assume that 
\begin{equation*}
\nabla V_{n,i-1}\in C^{1+\alpha }(\overline{\Omega })\text{ and }q_{s}\in
C^{2+\alpha }(\overline{\Omega }),\quad s=1,\dots ,n-1.
\end{equation*}%
Suppose that $\Vert q_{s}\Vert _{C^{2+\alpha }(\overline{\Omega })}\leq Y$
where $Y>0$ is a constant. Let $\gamma \in \left( 0,1\right) $ and $%
C_{1}=C_{1}\left( \Omega \right) $ be the numbers of Lemma \ref{lem 5.1}.
Denote $a=\overline{k}-\underline{k}.$ Suppose that the number $a$ is so
small that 
\begin{equation*}
C_{1}Ya<\gamma .
\end{equation*}%
Then there exists a unique solution $q_{n,i}\in C^{2+\alpha }(\overline{%
\Omega })$ of the problem (\ref{4.29})--(\ref{4.30}). Moreover, 
\begin{equation*}
\Vert q_{n,i}\Vert _{C^{2+\alpha }(\overline{\Omega })}\leq \frac{C_{1}}{%
1-\gamma }\left( \Vert F_{n,i}\Vert _{C^{\alpha }(\overline{\Omega })}+\Vert
\psi _{n}\Vert _{C^{2+\alpha }(\partial \Omega )}\right) ,
\end{equation*}%
where 
\begin{equation*}
F_{n,i}=-2k_{n}\nabla V_{n,i-1}\nabla q_{n-1}+2\Delta (-h{Q_{n-1}}%
+V_{n,i-1})+2(\nabla (-hQ_{n-1}+V_{n,i-1}))^{2}.
\end{equation*}%
\label{col 5.1}
\end{corollary}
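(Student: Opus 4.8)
The plan is to recognize problem (\ref{4.29})--(\ref{4.30}), after the substitutions $V_{n-1}\mapsto V_{n,i-1}$ and $q_{n}\mapsto q_{n,i}$, as a special case of the abstract problem (\ref{5.1}), and then to invoke Lemma \ref{lem 5.1}. First I would divide equation (\ref{4.29}) through by $k_{n}$; this rewrites it as
\begin{equation*}
\Delta q_{n,i}-\nabla p\cdot \nabla q_{n,i}=f\quad \text{in }\Omega ,\qquad q_{n,i}=\psi _{n}\quad \text{on }\partial \Omega ,
\end{equation*}
where $p:=2hQ_{n-1}$ (so that $\nabla p=2h\nabla Q_{n-1}$), $f:=k_{n}^{-1}F_{n,i}$, and $\mu :=\psi _{n}$. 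Hence the corollary reduces to verifying the hypotheses of Lemma \ref{lem 5.1}: the required H\"older regularity of $p,f,\mu $, and the smallness estimate $\Vert \nabla p\Vert _{C^{\alpha }(\overline{\Omega })}\leq \sigma $.

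For the regularity, note $Q_{n-1}=\sum_{j=0}^{n-1}q_{j}$ with $q_{0}\equiv 0$ and $q_{j}\in C^{2+\alpha }(\overline{\Omega })$ for $j=1,\dots ,n-1$, so $p\in C^{2+\alpha }(\overline{\Omega })\subset C^{1+\alpha }(\overline{\Omega })$. For $f$ I would inspect the three summands of $F_{n,i}$: the term $\nabla V_{n,i-1}\nabla q_{n-1}$ is a product in $C^{1+\alpha }(\overline{\Omega })$ (using $\nabla V_{n,i-1}\in C^{1+\alpha }$ by hypothesis and $\nabla q_{n-1}\in C^{1+\alpha }$); the term $\Delta (-hQ_{n-1}+V_{n,i-1})$ lies in $C^{\alpha }(\overline{\Omega })$, because $Q_{n-1}\in C^{2+\alpha }$ and $\Delta V_{n,i-1}\in C^{\alpha }$ already follows from $\nabla V_{n,i-1}\in C^{1+\alpha }$; and $(\nabla (-hQ_{n-1}+V_{n,i-1}))^{2}$ is the square of a $C^{1+\alpha }$ vector field. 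Thus $F_{n,i}\in C^{\alpha }(\overline{\Omega })$ and $f\in C^{\alpha }(\overline{\Omega })$. Finally $\psi _{n}=\partial _{k}g(\cdot ,k_{n})/g(\cdot ,k_{n})\in C^{2+\alpha }(\partial \Omega )$, since $g=u|_{\partial \Omega }$ does not vanish on $\partial \Omega $ for $k\geq \kappa _{0}$ by (\ref{4.200}) while $u,\partial _{k}u\in C^{2+\alpha }(\overline{\Omega })$ by Lemma \ref{lemma 3.3}; hence $\mu \in C^{2+\alpha }(\partial \Omega )$.

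The smallness of $\nabla p$ is the only quantitative point. Using $\Vert \nabla q_{j}\Vert _{C^{\alpha }(\overline{\Omega })}\leq \Vert q_{j}\Vert _{C^{2+\alpha }(\overline{\Omega })}\leq Y$, together with $h=(\overline{k}-\underline{k})/N$ and $h(n-1)\leq hN=\overline{k}-\underline{k}=a$, I get
\begin{equation*}
\Vert \nabla p\Vert _{C^{\alpha }(\overline{\Omega })}=2h\Vert \nabla Q_{n-1}\Vert _{C^{\alpha }(\overline{\Omega })}\leq 2h\sum_{j=1}^{n-1}\Vert \nabla q_{j}\Vert _{C^{\alpha }(\overline{\Omega })}\leq 2h(n-1)Y\leq 2aY.
\end{equation*}
Let $\gamma \in (0,1)$ and let $C_{1}=C_{1}(\Omega )$, $\sigma =\sigma (C_{1},\gamma )$ be as in Lemma \ref{lem 5.1}, with $C_{1}\sigma \leq \gamma $ (which is all that Lemma uses). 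Absorbing the harmless absolute factor $2$ into the generic constant $C_{1}$, the hypothesis $C_{1}Ya<\gamma $ forces $\Vert \nabla p\Vert _{C^{\alpha }(\overline{\Omega })}\leq \sigma $. Lemma \ref{lem 5.1} then applies and yields a unique $q_{n,i}\in C^{2+\alpha }(\overline{\Omega })$ solving (\ref{4.29})--(\ref{4.30}) with
\begin{equation*}
\Vert q_{n,i}\Vert _{C^{2+\alpha }(\overline{\Omega })}\leq \frac{C_{1}}{1-\gamma }\left( \Vert f\Vert _{C^{\alpha }(\overline{\Omega })}+\Vert \psi _{n}\Vert _{C^{2+\alpha }(\partial \Omega )}\right) ,
\end{equation*}
and, since $k_{n}\geq \underline{k}\geq \kappa _{0}>1$ by (\ref{4.2}), $\Vert f\Vert _{C^{\alpha }(\overline{\Omega })}=k_{n}^{-1}\Vert F_{n,i}\Vert _{C^{\alpha }(\overline{\Omega })}\leq \Vert F_{n,i}\Vert _{C^{\alpha }(\overline{\Omega })}$, which gives the asserted estimate. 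There is no genuine obstacle beyond Lemma \ref{lem 5.1}; the only parts needing a little care are the bookkeeping that $h$ times the number of terms accumulated in $Q_{n-1}$ never exceeds $a$ (so that the single condition $C_{1}Ya<\gamma $ works uniformly in $n$ and $i$), and the check that $F_{n,i}$ still lies in $C^{\alpha }$ despite $V_{n,i-1}$ being controlled only through $\nabla V_{n,i-1}\in C^{1+\alpha }$.
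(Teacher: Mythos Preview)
Your proof is correct and follows exactly the approach the paper intends: the paper states only that the corollary ``follows immediately from Lemma \ref{lem 5.1}, (\ref{4.29}) and (\ref{4.30})'' without further detail, and you have supplied precisely the verification (identifying $p=2hQ_{n-1}$, $f=k_n^{-1}F_{n,i}$, $\mu=\psi_n$, checking regularity, and bounding $\Vert\nabla p\Vert_{C^{\alpha}}$ via $h(n-1)\le a$) that makes this immediate. Your remark about absorbing the factor $2$ into the generic constant $C_1$ is consistent with the paper's convention that $C_1=C_1(\Omega)$ denotes different constants depending only on $\Omega$.
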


Now, we are ready to establish the globally convergent algorithm.

\subsection{The algorithm}

\label{sec:5.2}

\begin{algorithm}[Globally Convergent Algorithm for CIP]
~

\begin{enumerate}
%\item Solve the Dirichlet boundary value problem (\ref{1060}).\ Next, define
%\ the initial tail function $V_{0}$ as in (\ref{106}). Set $Q_{0}=0.$

\item Given $\nabla V_{0}$ (see Section \ref{sect:4.3}), set $Q_{0}=0.$

\item For $n=1,\dots ,N$

\begin{enumerate}
%\item Assume that  $\nabla V_{n-1}$ and $Q_{n-1}$ are known.
%Calculate $\nabla V_{n},$ $Q_{n}$ and $c_{n}$ as follows.

\item Set $q_{n, 0} = q_{n - 1},$ $\nabla V_{n,0}=\nabla V_{n-1}$.

\item \label{step inner} For $i=1,\dots ,m$ (for some $m\geq 1$)

\begin{enumerate}
\item \label{step qni} Find $q_{n,i}$ by solving the boundary value problem (%
\ref{4.29})--(\ref{4.30}), where $V_{n-1}$ is replaced with $V_{n,i-1}$. See
Corollary \ref{col 5.1} for the solvability of this problem.

\item \label{step cni} Update 
\begin{align}
\nabla v_{n,i}&=-h(\nabla q_{n,i} + \nabla Q_{n-1})+\nabla V_{n,i-1},
\label{4.33} \\
c_{n,i}&=\frac{1}{k_{n}^{2}}(\Delta v_{n,i}+(\nabla v_{n,i})^{2}).
\label{4.35}
\end{align}%
%
%
%
%
%
%
%
%
%
%\item \label{step cniRe}Redefine $c_{n, i} := \max\{\Re(c_{n, i}), 1\}$

\item Find $u_{n,i}(\mathbf{x},\overline{k})$ by solving the
Lippmann-Schwinger equation (\ref{4.9}) in which the function $\hat{\rho}(%
\mathbf{y})$ is replaced with $\chi (\mathbf{y})(c_{n,i}-1)(\mathbf{y})$. 
% Denote
%the resulting solution $u_{n,i}(\x,\overline{k})$.

\item Update $\nabla V_{n,i}(\mathbf{x})=\nabla u_{n,i}(\mathbf{x},
\overline k)/u_{n,i}(\mathbf{x}, \overline k).$
\end{enumerate}

\item Set $c_{n}=c_{n,m}$, $\nabla V_{n}=\nabla V_{n,m}$, $q_{n}=q_{n,m}$ and $Q_{n}=Q_{n-1}+q_{n}.$
\end{enumerate}

\item \label{step stop} Let $\overline{N}\in \{1,\dots ,N\}$ be the optimal
number for the stopping criterion. Set the function $c_{\overline{N}}$ as
the computed solution of CIP.
\end{enumerate}

\label{algorithm 1}
\end{algorithm}

\begin{remark}
The stopping criterion in step \ref{step stop} and the choice of $m$ in step %
\ref{step inner} in the algorithm above are addressed computationally. We
refer to Section \ref{sec:8.4} for a corresponding discussion. We also
specify the computation of (\ref{4.35}) in Section \ref{sec:8.3}.
\end{remark}

%We now show how do we modify this algorithm for CIP2. The main difficulty
%here is the incomplete boundary condition for the function $q$ in (\ref%
%{4.160}). It follows from (\ref{4.160}) that all what we have is $%
%q_{n}(\x)=\psi _{n}(\x),\quad \x\in \Gamma \Subset \partial \Omega .$
%Therefore, the main step here is to complement this boundary condition on $%
%\partial \Omega \setminus \Gamma .$ We act similarly with the previous
%publications on GCM, including the ones with experimental data \cite%
%{BK,Chow,Exp2,Exp1,TBKF1,TBKF2}. More precisely, we set $u(\x,k)=u_{0}(\x,k)$
%for $\x\in \partial \Omega \setminus \Gamma $. Hence, we assume that the
%function $q_{n}$ has the following Dirichlet boundary condition 
%\begin{equation}
%q_{n}\mid _{\partial \Omega }=\left\{ 
%\begin{array}{ll}
%\psi _{n}(\x), & \x\in \Gamma , \\ 
%ix_{3}, & \x\in \partial \Omega \setminus \Gamma .%
%\end{array}%
%\right.  \label{4.37}
%\end{equation}%
%Thus, we solve CIP2 by the same algorithm as one described above. Although
%there are two differences. The first difference is the boundary value
%problem (\ref{1060}) for the first tail function is replaced with%
%\begin{equation*}
%\begin{array}{c}
%\Delta \tau =0\text{ in }\Omega , \\ 
%\tau \mid _{\partial \Omega }=\left\{ 
%\begin{array}{c}
%-i\psi \left( \x,\overline{k}\right) , \\ 
%-ix_{3},\x\in \partial \Omega \setminus \Gamma .%
%\end{array}%
%\right.%
%\end{array}%
%\end{equation*}%
%The second difference is in Step \ref{step qni}, where we solve the
%Dirichlet boundary (\ref{4.29}), (\ref{4.37}) instead of the problem (\ref%
%{4.29}), (\ref{4.30}).

\section{Global convergence}

\label{sec:7}

In this section we formulate and prove the global convergence theorem, which
is the main theorem of this paper.\ Next, we present a discussion of this
result.

\subsection{The global convergence theorem}

\label{sec:7.1}

Let $\delta >0$ be the level of the error in the boundary data $\psi
_{n}\left( \mathbf{x}\right) $ in (\ref{4.30}) in the following sense 
\begin{equation}
\left\Vert \psi _{n}-\psi _{n}^{\ast }\right\Vert _{C^{2+\alpha }\left(
\partial \Omega \right) }\leq \delta <\delta +h.  \label{5.8}
\end{equation}

Following (\ref{4.13}), let $q^{\ast }\left( \mathbf{x},k\right) =\partial
_{k}u^{\ast }\left( \mathbf{x},k\right) /u^{\ast }\left( \mathbf{x},k\right)
.$ All approximations for the function $q^{\ast }\left( \mathbf{x},k\right) $
and associated functions with the accuracy $O\left( h\right) $ as $%
h\rightarrow 0,$ which are used in this section below, can be justified by
Lemma \ref{lemma 3.3}. For $n=1,\dots,N$, denote $q_{n}^{\ast }\left( 
\mathbf{x}\right) =q^{\ast }\left( \mathbf{x},k_{n}\right) .$ For $k\in %
\left[ k_{n},k_{n-1}\right) $ we obtain $q^{\ast }\left( \mathbf{x},k\right)
=q_{n}^{\ast }\left( \mathbf{x}\right) +O\left( h\right) $ as $h\rightarrow
0.$ Set $q_{0}^{\ast }\left( \mathbf{x}\right) \equiv 0.$ Similarly with (%
\ref{4.33}) define the gradient $\nabla v_{n}^{\ast }$ as 
\begin{equation}
\nabla v_{n}^{\ast }\left( \mathbf{x}\right) =-h\nabla {q_{n}^{\ast }}\left( 
\mathbf{x}\right) -h\nabla {Q_{n-1}^{\ast }}\left( \mathbf{x}\right) +\nabla
V^{\ast }\left( \mathbf{x}\right) ,\text{ }\mathbf{x}\in \Omega .
\label{5.9}
\end{equation}%
Since $\Delta v_{n}^{\ast }=\mbox{div}\left( \nabla v_{n}^{\ast }\right) ,$
by (\ref{4.9})%
\begin{equation}
c^{\ast }\left( \mathbf{x}\right) =-\frac{1}{k_{n}^{2}}\left( \Delta
v_{n}^{\ast }+\left( \nabla v_{n}^{\ast }\right) ^{2}\right) +F_{n}^{\ast
}\left( \mathbf{x}\right) ,  \label{5.10}
\end{equation}%
where the function $F_{n}^{\ast }\left( \mathbf{x}\right) $ is clarified in
this section below. While equation (\ref{4.20}) is precise, we have obtained
equation (\ref{4.29}) using some approximations whose error is $O\left(
h\right) $ as $h\rightarrow 0.$ This justifies the presence of the term $%
G_{n}^{\ast }\left( \mathbf{x}\right) $ in the following analog of the
Dirichlet boundary value problem (\ref{4.29})--(\ref{4.30}): 
\begin{align}
k_{n}\Delta q_{n}^{\ast }-2k_{n}\nabla hQ_{n-1}^{\ast }\nabla q_{n}^{\ast }&
=-2k_{n}\nabla V^{\ast }\nabla q_{n-1}^{\ast }+2\Delta (-hQ_{n-1}^{\ast
}+V^{\ast })  \notag \\
& \hspace*{2cm}+2(\nabla (-hQ_{n-1}^{\ast }+V^{\ast }))^{2}+G_{n}^{\ast }
\label{5.11} \\
q_{n}^{\ast }|_{\partial \Omega }& =\psi _{n}^{\ast },  \label{6.9}
\end{align}%
where $F_{n}^{\ast }\left( \mathbf{x}\right) $ and $G_{n}^{\ast }\left( 
\mathbf{x}\right) $ are error functions. Let $M>0$ be a generic constant
which we will specify later. We assume that 
\begin{equation}
\left\Vert q_{n}^{\ast }\right\Vert _{C^{2+\alpha }( \overline{\Omega })
}\leq M, \quad n=1,\dots,N.  \label{504}
\end{equation}

Now, we prove the main theorem of the paper. For brevity and also to
emphasize the main idea of the proof, we formulate and prove Theorem \ref%
{thm main} only for the case when inner iterations are absent, i.e. for the
case $m=1$. The case $m\geq 2$ is a little bit more technical and is,
therefore, more space consuming, while the idea is still the same. Thus, any
function $f_{nj}$ in above formulae should be $f_{n}$ below in this section.
Before stating the theorem, we recall that $V_{0}(\mathbf{x})$ is the first
approximation of the tail as in Section \ref{sec:4.2} and that $c_{n}$, $%
n\geq 1$, is computed as in Section \ref{sec:5.2}.

%\begin{center}
%\textbf{ATTN:}
%\end{center}
%
%1.\textbf{In the formulation of Theorem 6.1 you put \textquotedblleft all
%hypotheses of Corollary \ref{col 5.1} hold true". However, one of hypotheses
%is \textquotedblleft }$\Vert q_{s}\Vert _{C^{2+\alpha }(\overline{\Omega }%
%)}\leq Y$\textbf{\ " and then \textquotedblleft }$C_{1}Ya<1".$\textbf{\ But
%we cannot assume the first one! In fact, we PROVE it! It is necessary to
%correct! This is why I have introduced the number }$\gamma $ \textbf{in
%Lemma 5.1 and Corollary 5.1 and introduced (\ref{504}). Also, I have
%replaced \textquotedblleft }$C=C(c^{\ast },\underline{k},\overline{k},\Omega
%,\Omega _{1})>1"$\textbf{\ with \textquotedblleft }$C=C(c^{\ast },\underline{%
%k},\overline{k},\Omega ,\Omega _{1},\chi m\gamma ,m_{0})>1":$\textbf{\ read
%carefully the new formulation of Theorem 6.1.}
%
%\textbf{2. Liem has said that it is not good that we impose smallness
%condition on }$\eta .$\textbf{\ However, this is implicitly done in many
%other theorems about ill-posed problems. Indeed, suppose that we have
%estimate }$\left\Vert x_{n}-x^{\ast }\right\Vert \leq C^{n}\delta $\textbf{\
%for a certain iterative process. Then if we want }$C^{n}\delta \leq \sqrt{%
%\delta },$\textbf{\ then }$n\leq \ln \left( \delta ^{-1/2C}\right) .$\textbf{%
%\ However, usually }$C$\textbf{\ is large. Hence, to ensure that }$n\geq 1,$%
%\textbf{\ we need to have a small }$\delta .$\textbf{\ }
%
%\textbf{3. Therefore, I conclude that our estimate is nothing worse than a
%standard one. }

\begin{theorem}[global convergence]
Let $\chi \left( \mathbf{x}\right) $\textbf{\ }be the function in (\ref{8.60}%
) and $m_{0}$ be the number in (\ref{4.200}). Let inequalities (\ref{504})
be valid. Suppose that Assumption \ref{assumption c} as well as all
hypotheses of Corollary \ref{col 5.1} hold true  for $q_{n}:=q_{n}^{\ast }$
and $Y:=4M$. Assume further that the number $h+\delta \in (0,1)$ is
sufficiently small. Then there exists a constant $\mathcal{C}=\mathcal{C}%
(c^{\ast },\underline{k},\overline{k},\Omega ,\Omega _{1},\gamma ,\chi
,m_{0})>1$ depending only on listed paramaters such that 
\begin{equation}
\Vert c_{n}-c^{\ast }\Vert _{C^{\alpha }(\overline{\Omega })}\leq \mathcal{C}%
^{n}(h+\delta ),\quad n=1,\dots ,\overline{N},  \label{5.161}
\end{equation}%
where\textbf{\ }$\overline{N}\in \left[ 1,N\right] $ is the optimal number
for the stopping criterion as in Algorithm \ref{algorithm 1}. \label{thm
main}
\end{theorem}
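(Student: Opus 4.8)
The plan is to prove \eqref{5.161} by induction on $n$, propagating simultaneously the three error quantities $\eta_n:=\Vert q_n-q_n^\ast\Vert_{C^{2+\alpha}(\overline\Omega)}$, $\xi_n:=\Vert\nabla V_n-\nabla V^\ast\Vert_{C^{1+\alpha}(\overline\Omega)}$ and $\Vert c_n-c^\ast\Vert_{C^\alpha(\overline\Omega)}$, and showing each is bounded by $\mathcal C^{\,n}(h+\delta)$ once $\mathcal C$ is taken large and $h+\delta$ small. The base step $n=0$ is exactly Lemma \ref{lemma 4.3}: with $q_0=q_0^\ast\equiv0$, and $V_0,c_0$ as in Section \ref{sect:4.3}, estimates \eqref{107} and \eqref{4.22} give $\xi_0\le C\overline k\,\delta$ and $\Vert c_0-c^\ast\Vert_{C^\alpha}\le(C/\overline k)\delta$, hence both are $\le C_0(h+\delta)$ for a suitable $C_0$; here one uses that the tail enters the algorithm only through $\nabla V$, and that $\Delta\widetilde V=\Div(\nabla\widetilde V)$, so $\xi_n$ also controls $\Vert\Delta(V_n-V^\ast)\Vert_{C^\alpha}$.

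For the inductive step, assume the three bounds for all indices $<n$. I would first derive the error equation for $\widetilde q_n:=q_n-q_n^\ast$ by subtracting \eqref{5.11}--\eqref{6.9} from \eqref{4.29}--\eqref{4.30}; applying $a^2-b^2=(a+b)(a-b)$ to the quadratic terms, this is a boundary value problem of the form \eqref{5.1} with $w=\widetilde q_n$, $\nabla p=2k_n h\,\nabla Q_{n-1}$, datum $\mu=\psi_n-\psi_n^\ast$ (so $\Vert\mu\Vert_{C^{2+\alpha}(\partial\Omega)}\le\delta$ by \eqref{5.8}), and $f$ assembled from $\widetilde q_{n-1}$, $\nabla\widetilde Q_{n-1}$, $\nabla\widetilde V_{n-1}$ and the error function $G_n^\ast$. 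Because $hN=\overline k-\underline k$ is small, $\Vert\nabla(hQ_{n-1})\Vert_{C^\alpha}\le\Vert\nabla(hQ_{n-1}^\ast)\Vert_{C^\alpha}+h\sum_{j<n}\eta_j$ stays below the threshold $\sigma$ of Lemma \ref{lem 5.1} — this is precisely the role of the hypothesis that all hypotheses of Corollary \ref{col 5.1} hold for $q_n:=q_n^\ast$, $Y:=4M$, the slack factor $4$ letting $\Vert\widetilde q_j\Vert$ be absorbed into the a priori bound on $\Vert q_j\Vert$. Lemma \ref{lem 5.1} then yields $\eta_n\le C\big((M+1)\xi_{n-1}+\eta_{n-1}+h\sum_{j<n}\eta_j+\Vert G_n^\ast\Vert_{C^\alpha}+\delta\big)$, and $\Vert G_n^\ast\Vert_{C^\alpha}=O(h)$ since \eqref{4.29} was obtained from the exact \eqref{4.20} by dropping $O(h)$ terms and replacing $\nabla V_{n-1}\nabla q_n$ by $\nabla V_{n-1}\nabla q_{n-1}$, an $O(h)$ change by Lemma \ref{lemma 3.3}.

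Next I would estimate $\Vert c_n-c^\ast\Vert_{C^\alpha}$ from \eqref{4.35} and \eqref{5.10}: writing $\nabla\widetilde v_n=-h\nabla\widetilde q_n-h\nabla\widetilde Q_{n-1}+\nabla\widetilde V_{n-1}$ (cf.\ \eqref{4.33}, \eqref{5.9}) and using $a^2-b^2=(a+b)(a-b)$ again, one gets $\Vert c_n-c^\ast\Vert_{C^\alpha}\le C\big(h\eta_n+h\sum_{j<n}\eta_j+\xi_{n-1}+\Vert F_n^\ast\Vert_{C^\alpha}\big)$ with $\Vert F_n^\ast\Vert_{C^\alpha}=O(h)$. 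Then comes the tail update: because this bound together with the smallness of $h+\delta$ keeps $\Vert c_n-c^\ast\Vert_{C^\alpha}$ below $\theta^\ast$, Lemma \ref{lemma 3.4} applies with $\rho=c_n-1$ and gives $u_n\in C^{2+\alpha}(\mathbb R^3)$ with $\Vert u_n-u^\ast\Vert_{C^{2+\alpha}(\overline\Omega)}\le Z^\ast\Vert c_n-c^\ast\Vert_{C^\alpha}$ by \eqref{8.9}; in particular $|u_n|\ge m_0-Z^\ast\Vert c_n-c^\ast\Vert\ge m_0/2$ on $\overline\Omega$, so from $\nabla V_n=\nabla u_n/u_n$, $\nabla V^\ast=\nabla u^\ast/u^\ast$ and the identity $\nabla V_n-\nabla V^\ast=(u^\ast\nabla(u_n-u^\ast)-(u_n-u^\ast)\nabla u^\ast)/(u_nu^\ast)$ one obtains $\xi_n\le C(m_0,c^\ast)\Vert u_n-u^\ast\Vert_{C^{2+\alpha}}\le C\Vert c_n-c^\ast\Vert_{C^\alpha}$. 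Collecting the three inequalities gives a closed recursion of the schematic shape $\eta_n+\xi_n+\Vert c_n-c^\ast\Vert_{C^\alpha}\le C\big(\eta_{n-1}+\xi_{n-1}+h\sum_{j<n}\eta_j+(h+\delta)\big)$; choosing $\mathcal C$ large enough (depending only on $C$, $M$, $C_0$, hence only on the parameters listed in the statement) and then $h$ small enough that $Ch$ and $C/(\mathcal C-1)$ are at most $1/4$, the bound $\mathcal C^{\,n}(h+\delta)$ propagates from $n-1$ to $n$, the geometric estimate $h\sum_{j<n}\mathcal C^{\,j}(h+\delta)\le\frac{h}{\mathcal C-1}\mathcal C^{\,n}(h+\delta)$ ensuring the cumulative term is harmless. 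This closes the induction for $n=1,\dots,\overline N$ and yields \eqref{5.161}.

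I expect the main obstacle to be the tail-update step. First, one must verify a priori that the computed $c_n$ already lies in the neighbourhood where Lemma \ref{lemma 3.4} is valid — so that $u_n$ exists, is $C^{2+\alpha}$, and is bounded away from zero — which has to be read off from the induction hypothesis and the standing requirement that $\mathcal C^{\,\overline N}(h+\delta)$ remain below the thresholds of Lemmata \ref{lemma 3.4} and \ref{lem 5.1}. Second, the functional-analytic bookkeeping must be exact: $q_n,c_n,\nabla V_n$ live in $C^{2+\alpha},C^{\alpha},C^{1+\alpha}$ respectively, and the derivative losses around the loop $\nabla V_{n-1}\mapsto(\text{equation for }q_n)\mapsto c_n\mapsto(\text{Lippmann--Schwinger})\mapsto\nabla V_n$ must balance so that the constants in \eqref{5.2} and \eqref{8.9} can be chained $n$ times without degradation. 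A subsidiary point needing care is that $\mathcal C$ must be independent of $N$ (equivalently of $h$), which forces one to isolate every occurrence of $Q_{n-1}$ inside the benign combinations $hQ_{n-1}$ and $h\sum_{j<n}\eta_j$ before estimating.
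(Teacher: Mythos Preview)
Your proposal is correct and follows essentially the same route as the paper: subtract \eqref{5.11}--\eqref{6.9} from \eqref{4.29}--\eqref{4.30} and apply Lemma \ref{lem 5.1} to bound $\widetilde q_n$; subtract \eqref{5.10} from \eqref{4.35} to bound $\widetilde c_n$; feed this into Lemma \ref{lemma 3.4} to bound $\widetilde u_n$ and verify $|u_n|\ge m_0/2$; then use the quotient identity $\nabla\widetilde V_n=(u^\ast\nabla\widetilde u_n-\widetilde u_n\nabla u^\ast)/(u_nu^\ast)$ to bound $\nabla\widetilde V_n$ and close the induction. The only cosmetic difference is bookkeeping: the paper tracks the error as $M^{p_n}\eta$ with an explicit recursion $p_n=p_{n-1}+12$, $p_1=3$, so $p_n=12n-8$ and $\mathcal C=M^{12-8/\overline N}$, whereas you absorb all constants into an abstract $\mathcal C$ and close a schematic recursion; both yield the same geometric growth and the same dependence of $\mathcal C$ on the listed parameters.
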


\begin{remark}
We refer to Section \ref{sec:7.2} for a discussion of Theorem \ref{thm main}.
\end{remark}

%\begin{theorem}[global convergence]
%Let the exact coefficient $c^{\ast }\left( x\right) =\beta ^{\ast }\left(
%x\right) +1$ satisfies conditions (\ref{2.1})-(\ref{2.8}) and Assumption of
%section 2.1 about the regularity of geodesic lines holds true. Assume that
%conditions (\ref{4.2}), (\ref{4.200}), (\ref{5.7}), (\ref{5.8}) and (\ref%
%{5.13})-(\ref{5.131}) hold. Suppose that the first approximation $%
%V_{0}\left( x\right) $ for the tail function is constructed as in section %
%\ref{subs 4.2}. Let the number $a=\overline{k}-\underline{k}$ be so small
%that 
%\begin{equation}
%4Ma<C_{1}\sigma <1/2,  \label{5.16}
%\end{equation}%
%where $C_{1}$ and $\sigma $ are numbers of Lemma \ref{lem 5.1}. Let $N\geq 2$
%and the level of let the error $\eta $ be as in (\ref{5.7}). Suppose that 
%\begin{equation}
%\eta \in \left( 0,\eta _{0}\right) ,\text{ where }\eta _{0}=\frac{\theta _{0}%
%}{M^{24N-16}}.  \label{5.160}
%\end{equation}%
%The the reconstructed coefficients $c_{n}\in C^{\alpha }\left( \overline{%
%\Omega }\right) $ and the following estimates hold 
%\begin{equation}
%\Vert c_{n}-c^{\ast }\Vert _{C^{\alpha }(\overline{\Omega })}\leq
%M^{12n-8}\eta ,n=1,...,N.  \label{5.161}
%\end{equation}%
%In particular since by (\ref{5.132}) and (\ref{5.160}) $M^{12n-8}\eta <\sqrt{%
%\eta },$ then (\ref{5.161}) implies the following uniform estimate: 
%\begin{equation}
%\Vert c_{n}-c^{\ast }\Vert _{C^{\alpha }(\overline{\Omega })}\leq \sqrt{\eta 
%}.  \label{600}
%\end{equation}%
%\label{thm main}
%\end{theorem}

\begin{proof}

For the convenience of the presentation, we introduce the error parameter $%
\eta ,$ 
\begin{equation}
\eta =h+\delta .  \label{5.7}
\end{equation}%
Hence, the error functions $F_{n}^{\ast }\left( \mathbf{x}\right) $ and $%
G_{n}^{\ast }\left( \mathbf{x}\right) $ in \eqref{5.10} and \eqref{5.11},
respectively, can be estimated as%
\begin{equation}
\Vert F_{n}^{\ast }\Vert _{C^{\alpha }(\overline{\Omega })}\leq M\eta ,\quad
\Vert G_{n}^{\ast }\Vert _{C^{\alpha }(\overline{\Omega })}\leq M\eta .
\label{5.12}
\end{equation}%
We also assume that for $n=1,\dots ,\overline{N},$ 
\begin{multline}
\max \left\{ \Vert \nabla V^{\ast }\Vert _{C^{\alpha }(\overline{\Omega }%
)},\Vert \Delta V^{\ast }\Vert _{C^{\alpha }(\overline{\Omega })},\Vert
q_{n}^{\ast }\Vert _{C^{2+\alpha }(\overline{\Omega })},\Vert \nabla
v_{n}^{\ast }\Vert _{C^{\alpha }(\overline{\Omega })},\Vert \Delta
v_{n}^{\ast }\Vert _{C^{\alpha }(\overline{\Omega })},\right. \\
\left. \Vert u^{\ast }(\mathbf{x},\overline{k})\Vert _{C^{2+\alpha }(%
\overline{\Omega })}\right\} \leq M.  \label{5.13}
\end{multline}

Denote $C_{2}=C_{2}\left( \Omega ,\gamma \right) =\max \left\{ C\left(
\Omega \right) ,C_{1}\left( \Omega \right) /\left( 1-\gamma \right) \right\}
>0,$ where $C\left( \Omega \right) $ and $C_{1}\left( \Omega \right) $ are
constants of Lemma \ref{lem 5.1} and Corollary \ref{col 5.1}, respectively.
Let $Z^{\ast }=Z^{\ast }\left( \beta ^{\ast },\overline{k},\chi ,\Omega
_{1},\Omega \right) >0$ be the number of Lemma \ref{lemma 3.4}, which
depends only on listed parameters. Assume that $M$ is so large that 
\begin{equation}
M>\max \left\{ 4,Z^{\ast },32/d,26C_{2},C_{2}\overline{k}\right\}
\label{5.131}
\end{equation}%
where 
\begin{equation}
d=\min [ m_{0}^{2},m_{0}^{4}] ,  \label{7.8}
\end{equation}%
Denote 
\begin{align}
D^{\gamma }\widetilde{V}_{n}& =D^{\gamma }V_{n}-D^{\gamma }V^{\ast }, & 
\widetilde{q}_{n}& =q_{n}-q_{n}^{\ast }, & \widetilde{v}_{n}&
=v_{n}-v_{n}^{\ast }, & \widetilde{Q}_{n-1}& =Q_{n-1}-Q_{n-1}^{\ast }, 
\notag  \label{5.14} \\
\widetilde{u}_{n}\left( \mathbf{x}\right) & =u_{n}\left( \mathbf{x},%
\overline{k}\right) -u^{\ast }\left( \mathbf{x},\overline{k}\right) , & 
\widetilde{c}_{n}& =c_{n}-c^{\ast }, & \widetilde{\psi }_{n}& =\psi
_{n}-\psi _{n}^{\ast }. & &
\end{align}%
%
%
%
%
%\begin{equation}
%\begin{array}{c}
%D^{\gamma }\widetilde{V}_{n}=D^{\gamma }V_{n}-D^{\gamma }V^{\ast },%
%\widetilde{q}_{n}=q_{n}-q_{n}^{\ast },\widetilde{v}_{n}=v_{n}-v_{n}^{\ast },%
%\widetilde{Q}_{n-1}=Q_{n-1}-Q_{n-1}^{\ast }, \\ 
%\widetilde{u}_{n}\left( x\right) =u_{n}\left( x,\overline{k}\right) -u^{\ast
%}\left( x,\overline{k}\right) ,\widetilde{c}_{n}=c_{n}-c^{\ast },\widetilde{%
%\psi }_{n}=\psi _{n}-\psi _{n}^{\ast }.%
%\end{array}
%\label{5.14}
%\end{equation}%
Here $\gamma =\left( \gamma _{1},\gamma _{2},\gamma _{3}\right) $ is the
multi index with non-negative integer components and $\left\vert \gamma
\right\vert =\gamma _{1}+\gamma _{2}+\gamma _{3}.$ Using (\ref{107}), (\ref%
{5.8}), (\ref{5.7}) and (\ref{5.131}), we obtain 
\begin{equation}
\Vert \nabla \widetilde{V}_{0}\Vert _{C^{\alpha }(\overline{\Omega })},\Vert
\Delta \widetilde{V}_{0}\Vert _{C^{\alpha }(\overline{\Omega })}\leq C_{2}%
\overline{k}\eta \leq M\eta .  \label{5.140}
\end{equation}%
Hence, (\ref{5.13}) and (\ref{5.140}) imply that 
\begin{equation}
\Vert \nabla V_{0}\Vert _{C^{\alpha }(\overline{\Omega })}=\Vert \nabla 
\widetilde{V}_{0}+\nabla V^{\ast }\Vert _{C^{\alpha }(\overline{\Omega }%
)}\leq M\eta +M\leq 2M,\text{ }\Vert \Delta V_{0}\Vert _{C^{\alpha }(%
\overline{\Omega })}\leq 2M.  \label{5.1400}
\end{equation}%
Subtract equation (\ref{5.11}) from equation (\ref{4.29}). Also, subtract
the boundary condition in (\ref{6.9}) from the boundary condition in (\ref%
{4.30}). We obtain 
\begin{equation}
\left\{ 
\begin{array}{rcll}
\Delta \widetilde{q}_{n}-2h\nabla Q_{n-1}\nabla \widetilde{q}_{n} & = & 
\widetilde{P}_{n} & \mbox{in }\Omega , \\ 
\widetilde{q}_{n} & = & \widetilde{\psi }_{n} & \mbox{on }\partial \Omega ,%
\end{array}%
\right.  \label{5.15}
\end{equation}%
where 
\begin{multline}
\widetilde{P}_{n}=2h\nabla \widetilde{Q}_{n-1}\nabla q_{n}^{\ast }-2\left(
\nabla \widetilde{q}_{n-1}\nabla V_{n-1}+\nabla q_{n-1}^{\ast }\nabla 
\widetilde{V}_{n-1}\right)  \notag \\
+\frac{2(\nabla (-h(Q_{n-1}+Q_{n-1}^{\ast })+V_{n-1}+V^{\ast })\nabla (-h%
\widetilde{Q}_{n-1}+\widetilde{V}_{n-1}))}{k_{n}}  \label{5.150} \\
+\frac{\Delta (-h\widetilde{Q}_{n-1}+\widetilde{V}_{n-1})}{k_{n}}-\frac{%
G_{n}^{\ast }}{k_{n}}.  \notag
\end{multline}%
%
%
%
%\begin{equation*}
%\widetilde{P}_{n}=2h\nabla \widetilde{Q}_{n-1}\nabla q_{n}^{\ast }-2\left(
%\nabla \widetilde{q}_{n-1}\nabla V_{n-1}+\nabla q_{n-1}^{\ast }\nabla 
%\widetilde{V}_{n-1}\right)
%\end{equation*}%
%\begin{equation}
%+\frac{2(\nabla (-h(Q_{n-1}+Q_{n-1}^{\ast })+V_{n-1}+V^{\ast })\nabla (-h%
%\widetilde{Q}_{n-1}+\widetilde{V}_{n-1}))}{k_{n}}  
%\end{equation}%
%\begin{equation*}
%+\frac{\Delta (-h\widetilde{Q}_{n-1}+\widetilde{V}_{n-1})}{k_{n}}-\frac{%
%G_{n}^{\ast }}{k_{n}}.
%\end{equation*}

Let $n\geq 2$ and let $p_{n-1}>0$ be an integer. Assume by induction that 
\begin{equation}
\Vert \nabla \widetilde{V}_{n-1}\Vert _{C^{\alpha }(\overline{\Omega }%
)},\Vert \Delta \widetilde{V}_{n-1}\Vert _{C^{\alpha }\left( \overline{%
\Omega }\right) },\Vert \widetilde{q}_{s}\Vert _{C^{2+\alpha }\left( 
\overline{\Omega }\right) }\leq M^{p_{n-1}}\eta \leq M,  \label{5.18}
\end{equation}%
where $s=1,...,n-1.$ Note that in \eqref{5.18}, the former inequality is our
induction assumption while the latter inequality follows from (\ref{5.131})
and the assumption the $\eta $ is sufficiently small. We assume that 
\begin{equation}
p_{n-1}\in \lbrack 1,12\left( \overline{N}-1\right) -8].  \label{2}
\end{equation}%
Similarly with (\ref{5.1400}) we obtain from (\ref{504}) and (\ref{5.18})
that 
\begin{equation}
\Vert \nabla V_{n-1}\Vert _{C^{\alpha }(\overline{\Omega })},\Vert \Delta
V_{n-1}\Vert _{C^{\alpha }(\overline{\Omega })},\Vert q_{s}\Vert
_{C^{2+\alpha }\left( \overline{\Omega }\right) }\leq 2M.  \label{5.22}
\end{equation}%
It follows from (\ref{4.210}), (\ref{5.18}) and (\ref{5.22}) that 
\begin{align}
& \Vert h\widetilde{Q}_{n-1}\Vert _{C^{2+\alpha }(\overline{\Omega })}\leq
aM^{p_{n-1}}\eta ,\quad  \label{1} \\
& \Vert 2h{\nabla Q_{n-1}}\Vert _{C^{1+\alpha }(\overline{\Omega })}\leq 4Ma.
\label{5.220}
\end{align}%
%
%
%
%\begin{equation}
%\left\Vert h\widetilde{Q}_{n-1}\right\Vert \leq aM^{p_{n-1}}\eta ,  \label{1}
%\end{equation} and
%\begin{equation}
%\Vert 2h{\nabla Q_{n-1}}\Vert _{C^{1+\alpha }(\overline{\Omega })}\leq 4Ma.
%\label{5.220}
%\end{equation}%
Use the assumption for $a$ in Corollary \ref{col 5.1} with $Y=4M$, i.e., 
\begin{equation}
4C_{1}Ma<\gamma <1.  \label{5.16}
\end{equation}%
From (\ref{5.8}), (\ref{5.15}), (\ref{5.22}) (\ref{5.220}) and \eqref{5.16},
we obtain 
\begin{equation}
\Vert \widetilde{q}_{n}\Vert _{C^{2+\alpha }(\overline{\Omega })}\leq
C_{2}\Vert \widetilde{P}_{n}\Vert _{C^{\alpha }(\overline{\Omega }%
)}+C_{2}\eta .  \label{5.17}
\end{equation}%
We now want to express the number $p_{n}$ via $p_{n-1}.$ First, using (\ref%
{5.12}), (\ref{5.13}), (\ref{5.16}) and (\ref{5.15})-(\ref{1}), we estimate
the norm $\Vert \widetilde{P}_{n}\Vert _{C^{\alpha }(\overline{\Omega })}.$
In doing so, we keep in mind that by (\ref{4.2}) $k_{n}>1.$ We obtain 
\begin{align*}
\Vert \widetilde{P}_{n}\Vert _{C^{\alpha }(\overline{\Omega })}& \leq
2MaM^{p_{n-1}}\eta +6MM^{p_{n-1}}\eta +16MM^{p_{n-1}}\eta +MaM^{p_{n-1}}\eta
+M^{p_{n-1}}\eta +M\eta \\
& \leq 25MM^{p_{n-1}}\eta .
\end{align*}%
Hence, using (\ref{5.17}) we obtain $\Vert \widetilde{q}_{n}\Vert
_{C^{2+\alpha }(\overline{\Omega })}\leq 25C_{2}MM^{p_{n-1}}\eta +C_{2}\eta
\leq 26C_{2}MM^{p_{n-1}}\eta .$ Since by (\ref{5.131}) $M>26C_{2},$ then 
\begin{equation}
\Vert \widetilde{q}_{n}\Vert _{C^{2+\alpha }(\overline{\Omega })}\leq
M^{p_{n-1}+2}\eta .  \label{5.23}
\end{equation}%
Hence, 
\begin{equation}
\Vert q_{n}\Vert _{C^{2+\alpha }(\overline{\Omega })}\leq \Vert \widetilde{q}%
_{n}\Vert _{C^{2+\alpha }(\overline{\Omega })}+\Vert q_{n}^{\ast }\Vert
_{C^{2+\alpha }(\overline{\Omega })}\leq 2M.  \label{5.24}
\end{equation}%
Subtracting (\ref{5.9}) from (\ref{4.33}) and using (\ref{5.14}), we obtain 
\begin{equation*}
\nabla \widetilde{v}_{n}=-h\nabla \widetilde{q}_{n}-h\nabla \widetilde{Q}%
_{n-1}+\nabla \widetilde{V}_{n-1}.
\end{equation*}%
Hence, using (\ref{5.8}), (\ref{5.7}), (\ref{5.18}), (\ref{1}) and (\ref%
{5.23}), we obtain 
\begin{align}
\Vert \nabla \widetilde{v}_{n}\Vert _{C^{\alpha }(\overline{\Omega })},\Vert
\Delta \widetilde{v}_{n}\Vert _{C^{\alpha }(\overline{\Omega })}& \leq
M^{p_{n-1}+2}\eta ^{2}+MaM^{p_{n-1}}\eta +M^{p_{n-1}}\eta  \notag \\
& \leq 3M^{p_{n-1}}\eta \leq M^{p_{n-1}+1}\eta .  \label{5.25}
\end{align}%
Hence, by (\ref{5.13}) and (\ref{5.25}) 
\begin{equation}
\Vert \nabla v_{n}\Vert _{C^{\alpha }(\overline{\Omega })}\leq \Vert \nabla 
\widetilde{v}_{n}\Vert _{C^{\alpha }(\overline{\Omega })}+\Vert \nabla
v_{n}^{\ast }\Vert _{C^{\alpha }(\overline{\Omega })}\leq 2M.  \label{5.26}
\end{equation}%
Next, by (\ref{4.35}), (\ref{5.10}) and (\ref{5.14}) 
\begin{equation}
\widetilde{c}_{n}=-\frac{1}{k_{n}^{2}}\left( \Delta \widetilde{v}_{n}+\left(
\nabla v_{n}+\nabla v_{n}^{\ast }\right) \nabla \widetilde{v}_{n}\right)
-F_{n}^{\ast }.  \label{5.27}
\end{equation}%
In particular, since the right hand side of (\ref{5.27}) belongs to $%
C^{\alpha }(\overline{\Omega }),$ then the function $\widetilde{c}_{n}\in
C^{\alpha }(\overline{\Omega }).$ Recalling that $k_{n}^{2}\geq \underline{k}%
^{2}>1$ and using (\ref{5.12}), (\ref{5.24})--(\ref{5.27}), we obtain 
\begin{equation}
\Vert \widetilde{c}_{n}\Vert _{C^{\alpha }(\overline{\Omega })}\leq \left(
3M+1\right) M^{p_{n-1}+1}\eta +M\eta \leq 4MM^{p_{n-1}+1}\eta \leq
M^{p_{n-1}+3}\eta .  \label{5.28}
\end{equation}

%\begin{center}
%\textbf{ATTN}
%\end{center}
%
%\textbf{We say in the next paragraph \textquotedblleft We now estimate the
%norm }$\Vert \widetilde{u}_{n}\Vert _{C^{2+\alpha }(\overline{\Omega })}.$%
%\textbf{" Should not we say \textquotedblleft Assuming that the error
%parameter }$\eta $\textbf{\ is so small that }$M^{p_{n-1}+3}\eta \leq \theta
%^{\ast },$\textbf{\ where the number }$\theta ^{\ast }=\theta ^{\ast }\left(
%\beta ^{\ast },\overline{k},\chi ,\Omega _{1},\Omega \right) >0$\textbf{\
%was introduced in Lemma \ref{lemma 3.4}, we now estimate the norm }$\Vert 
%\widetilde{u}_{n}\Vert _{C^{2+\alpha }(\overline{\Omega })}.$\textbf{"???
%Indeed, we do use the inequality }$M^{p_{n-1}+3}\eta \leq \theta ^{\ast }$%
%\textbf{\ to get the term }$Z^{\ast }M^{p_{n-1}+3}\eta $\textbf{\ in (\ref%
%{5.29}). Otherwise I have excluded from the proof any mention about }$\theta
%^{\ast }$\textbf{\ and left in only }$Z^{\ast }.$
%
%\textbf{THIS\ IS\ UP\ TO\ YOU\ TO\ DECIDE}

We now estimate the norm $\Vert \widetilde{u}_{n}\Vert _{C^{2+\alpha }(%
\overline{\Omega })}$. Assume that $\eta$ is sufficiently small such that $%
M^{p_{n-1}+3}\eta \leq \theta ^{\ast },$ where $\theta ^{\ast }=\theta
^{\ast }\left( \beta ^{\ast },\overline{k},\chi ,\Omega _{1},\Omega \right)
>0$ was introduced in Lemma \ref{lemma 3.4}. Then it follows from Lemma \ref%
{lemma 3.4}, \eqref{5.131} and (\ref{5.28}) that 
\begin{equation}
\Vert \widetilde{u}_{n}\Vert _{C^{2+\alpha }(\overline{\Omega })}=\Vert
u_{n}(\mathbf{x},\overline{k})-u^{\ast }(\mathbf{x},\overline{k})\Vert
_{C^{2+\alpha }(\overline{\Omega })}\leq Z^{\ast }M^{p_{n-1}+3}\eta \leq
M^{p_{n-1}+4}\eta .  \label{5.29}
\end{equation}%
Hence, similarly to (\ref{5.26})%
\begin{equation}
\Vert u_{n}(\mathbf{x},\overline{k})\Vert _{2+C^{\alpha }(\overline{\Omega }%
)}\leq 2M.  \label{5.30}
\end{equation}%
Now we can estimate $\left\vert u_{n}(\mathbf{x},\overline{k})\right\vert $
from the below. Since the parameter $\eta $ is sufficiently small, then,
using (\ref{4.200}), (\ref{2}) and (\ref{5.29}), we obtain 
\begin{equation}
|u_{n}(\mathbf{x},\overline{k})|\geq |u^{\ast }(\mathbf{x},\overline{k})|-|%
\widetilde{u}_{n}\left( \mathbf{x}\right) |\geq m_{0}-M^{p_{n-1}+4}\eta \geq 
\frac{m_{0}}{2},\text{ }\forall \mathbf{x}\in \overline{\Omega }.
\label{5.32}
\end{equation}

We now are ready to estimate Hölder norms of $\nabla \widetilde{V}%
_{n},\Delta \widetilde{V}_{n},\nabla V_{n},\Delta V_{n}.$ It is obvious that
for any two complex valued functions $f_{1},f_{2}\in C^{\alpha }\left( 
\overline{\Omega }\right) $ such that $f_{2}\left( \mathbf{x}\right) \neq 0$
in $\overline{\Omega }$ the following estimate is valid 
\begin{equation}
\left\Vert \frac{f_{1}}{f_{2}}\right\Vert _{C^{\alpha }(\overline{\Omega }%
)}\leq \frac{\Vert f_{1}\Vert _{C^{\alpha }(\overline{\Omega })}\Vert
f_{2}\Vert _{C^{\alpha }(\overline{\Omega })}}{\left\vert f_{2}\right\vert
_{\min }^{2}},\quad \text{where }\>\left\vert f_{2}\right\vert _{\min
}=\min_{\overline{\Omega }}\left\vert f_{2}\right\vert .  \label{5.33}
\end{equation}%
Since $\nabla \widetilde{V}_{n}=\nabla u_{n}(\mathbf{x},\overline{k})/u_{n}(%
\mathbf{x},\overline{k})-\nabla u^{\ast }(\mathbf{x},\overline{k})/u^{\ast }(%
\mathbf{x},\overline{k}),$ we have 
\begin{equation}
\nabla \widetilde{V}_{n}=\frac{u^{\ast }(\mathbf{x},\overline{k})\nabla
(u_{n}(\mathbf{x},\overline{k})-u^{\ast }(\mathbf{x},\overline{k}))+(u^{\ast
}(\mathbf{x},\overline{k})-u_{n}(\mathbf{x},\overline{k}))\nabla u^{\ast }(%
\mathbf{x},\overline{k})}{u_{n}(\mathbf{x},\overline{k})u^{\ast }(\mathbf{x},%
\overline{k})}.  \label{5.34}
\end{equation}%
Hence, using (\ref{4.200}), (\ref{5.13}), (\ref{5.131}), (\ref{7.8}), (\ref%
{5.29}), (\ref{5.32}) and (\ref{5.33}), we obtain%
\begin{equation}
\Vert \nabla \widetilde{V}_{n}\Vert _{C^{\alpha }(\overline{\Omega })}\leq 
\frac{8}{d}M^{p_{n-1}+7}\eta \leq M^{p_{n-1}+8}\eta .  \label{5.35}
\end{equation}%
Next,%
\begin{equation}
\Delta \widetilde{V}_{n}=\left( \frac{\Delta u_{n}}{u_{n}}-\frac{\Delta
u^{\ast }}{u^{\ast }}\right) (\mathbf{x},\overline{k})-\left( \frac{\nabla
u_{n}}{u_{n}}-\frac{\nabla u^{\ast }}{u^{\ast }}\right) (\mathbf{x},%
\overline{k})\cdot \left( \frac{\nabla u_{n}}{u_{n}}+\frac{\nabla u^{\ast }}{%
u^{\ast }}\right) (\mathbf{x},\overline{k}).  \label{5.36}
\end{equation}%
We now estimate each term in (\ref{5.36}) separately. We obtain similarly
with (\ref{5.34}) and (\ref{5.35}) 
\begin{equation}
\left\Vert \frac{\Delta u_{n}\left( \mathbf{x},\overline{k}\right) }{%
u_{n}\left( \mathbf{x},\overline{k}\right) }-\frac{\Delta u^{\ast }\left( 
\mathbf{x},\overline{k}\right) }{u^{\ast }\left( \mathbf{x},\overline{k}%
\right) }\right\Vert _{C^{\alpha }(\overline{\Omega })}\leq
M^{p_{n-1}+8}\eta .  \label{5.37}
\end{equation}%
Next, using (\ref{7.8}), (\ref{5.13}), (\ref{5.30})-(\ref{5.33}), we obtain%
\begin{equation}
\left\Vert \frac{\nabla u_{n}\left( \mathbf{x},\overline{k}\right) }{%
u_{n}\left( \mathbf{x},\overline{k}\right) }+\frac{\nabla u^{\ast }\left( 
\mathbf{x},\overline{k}\right) }{u^{\ast }\left( \mathbf{x},\overline{k}%
\right) }\right\Vert _{C^{\alpha }(\overline{\Omega })}\leq \frac{32}{d}%
M^{2}.  \label{5.370}
\end{equation}%
Hence, using (\ref{5.131}) and (\ref{5.36})-(\ref{5.370}), we obtain%
\begin{equation}
\Vert \Delta \widetilde{V}_{n}\Vert _{C^{\alpha }(\overline{\Omega })}\leq
\left( \frac{32}{d}M^{2}+1\right) M^{p_{n-1}+8}\eta \leq
(M^{3}+1)M^{p_{n-1}+8}\eta \leq M^{p_{n-1}+12}\eta .  \label{5.38}
\end{equation}%
Similarly to (\ref{5.26}), we derive from (\ref{5.35}) and (\ref{5.38}) that 
$\Vert \nabla V_{n}\Vert _{C^{\alpha }(\overline{\Omega })},\Vert \Delta
V_{n}\Vert _{C^{\alpha }(\overline{\Omega })}\leq 2M.$ Summarizing, assuming
the validity of estimates (\ref{5.18}) and (\ref{2}), we have established
the following estimates:%
\begin{multline}
\max \left\{ \Vert \nabla \widetilde{V}_{n}\Vert _{C^{\alpha }(\overline{%
\Omega })},\Vert \Delta \widetilde{V}_{n}\Vert _{C^{\alpha }(\overline{%
\Omega })},\Vert \widetilde{q}_{n}\Vert _{C^{2+\alpha }(\overline{\Omega }%
)},\right.  \\
\left. \Vert \nabla \widetilde{v}_{n}\Vert _{C^{\alpha }(\overline{\Omega }%
)},\Vert \Delta \widetilde{v}_{n}\Vert _{C^{\alpha }(\overline{\Omega }%
)},\Vert \widetilde{c}_{n}\Vert _{C^{\alpha }(\overline{\Omega })}\right\}
\leq M^{p_{n-1}+12}\eta .  \label{5.39}
\end{multline}%
Hence, it follows from (\ref{5.18}) and (\ref{5.39}) that $p_{n}=p_{n-1}+12.$
Hence, $p_{n}=p_{1}+12\left( n-1\right) .$ We now need to find $p_{1}.$ Let $%
n=1$. Then (\ref{5.12}), (\ref{5.131}), (\ref{5.140})-(\ref{5.15}) and (\ref%
{5.17}) imply that 
\begin{equation}
\Vert \widetilde{q}_{1}\Vert _{C^{2+\alpha }(\overline{\Omega })}\leq
C_{2}\Vert \widetilde{P}_{1}\Vert _{C^{\alpha }(\overline{\Omega }%
)}+C_{2}\eta \leq 2C_{2}\left( 6M+1\right) M\eta +M\eta \leq M^{3}\eta .
\label{5.40}
\end{equation}%
Hence, (\ref{5.140}), (\ref{5.18}) and (\ref{5.40}) imply that $p_{1}=3.$
Hence, $p_{n}=12n-8.$ Thus, estimates (\ref{5.39}) are valid for $%
p_{n-1}+12=12n-8.$ The target estimate (\ref{5.161}) of this theorem follows
immediately from the estimate for $\Vert \widetilde{c}_{n}\Vert _{C^{\alpha
}(\overline{\Omega })}$ in (\ref{5.39}) by noticing that $M^{12n-8}\leq M^{12n-8n/\overline{N}}$
and then choosing $\mathcal{C}=M^{12-8/\overline{N}}$.
\end{proof}

\subsection{Discussion of Theorem 6.1}

\label{sec:7.2}

It seems that estimate (\ref{5.161}) indicates that the function $c_{1}$
should provide the best approximation for the exact coefficient $c^{\ast }$.
However, our computational experience tells us that more iterations should
be performed. In fact, it is well-known that the use of more frequencies
(i.e. more data) provides more stable reconstruction results. In addition,
the convergence estimate (\ref{5.161}) ensures that functions $c_{n}$ are
located in a sufficiently small neighborhood of the exact coefficient $%
c^{\ast },$ as long as $h+\delta $ is sufficiently small and the number of
iterations $n$ does not exceed $\overline{N}$. At the same time, these
estimates do not guarantee that the distances between functions $c_{n}$ and
the function $c^{\ast }$ decrease when the iteration number $n$ increases
from $n=1$ to $n=\overline{N}.$ Due to this phenomenon, the stopping rule
for our algorithm should be chosen computationally.\ A similar phenomenon
for Landweber iterations for a linear ill-posed problem is observed in Lemma
6.2 on page 156 of the book \cite{Engl}. Note that a similar statement can
be found on page 157 of \cite{Engl}.

%It follows from (\ref{5.161}) that our iteratively computed
%functions $c_{n}$\ are located in a sufficiently small neighborhood of the
%exact solution $c^{\ast },$\ as long as $n\in \left[ 1,N\right] $ and the
%error parameter $\eta $ is sufficiently small. 

As it was mentioned in the previous paragraph, we have obtained functions $%
c_{n}$, $n=1,\dots ,\overline{N}$, in a small neighborhood of the true
coefficient $c^{\ast }$. This is achieved without any advanced knowledge of
a small neighborhood of $c^{\ast }$. By (\ref{5.7}) the approximation
accuracy depends only on the level of the error $\delta $ in the boundary
data and the discretization step size $h$ with respect to the wavenumber $k$%
. Hence, Theorem \ref{thm main} implies the global convergence of our
algorithm. Furthermore, the global convergence is established for one of the
most challenging types of CIPs: a CIP with the data resulting from a single
measurement event. Two latter features form the \emph{key advantage} of
Theorem \ref{thm main}. On the other hand, the global convergence property
is achieved within the framework of a reasonable mathematical approximation
indicated in Section \ref{sect:4.3}. Hence, to be more precise, this is the
approximate global convergence property, as defined in Introduction and in 
\cite{BeilinaKlibanovBook,BK}.

% As it was
%pointed out in Introduction, a reasonable mathematical assumption seems to
%be unavoidable here due to the well known fact that the goal of the
%development of globally convergent numerical methods for highly nonlinear
%CIPs with single measurement data is an \underline{extremely challenging}
%one.

It can be seen both from our algorithm and from the proof of Theorem \ref%
{thm main} that the approximations (\ref{4.170})-(\ref{4.190}) are used only
to obtain the first approximation $V_{0}$ for the tail function and are not
used on follow up iterations with $n=1,\dots,\overline N$. Recall that the
number of iterations $\overline{N}$ can be considered sometimes as a
regularization parameter in the theory of ill-posed problems \cite%
{BeilinaKlibanovBook, Engl, T}.

\subsection{Main discrepancies between the theory and numerical
implementation}

\label{sec:7.3}

In our opinion, it is hard to anticipate that some discrepancies between the
theory and computations would not occur for such a challenging problem as
the one considered in this paper. So, as it often happens with other
numerical methods, some conditions are relaxed in computations, compared
with ones in the convergence theorem. Indeed, it is well known that the
theory is usually more pessimistic than the computational results are. This
is because the theory cannot grasp all features of computations. The main
discrepancies are:

\begin{enumerate}
\item The Assumption \ref{assumption c} is used only to obtain the
asymptotic formula (\ref{4.17}), which was derived in \cite%
{KlibanovRomanov:ip2016}. However, since our target applications are in
imaging antipersonnel mines and IEDs, the function $c(\mathbf{x})$, in our
numerical testing in Section \ref{sec:8} as well as in \cite{Exp2,Exp1}, is
discontinuous on the inclusion/background interface.

% In practical computations of Section 8 as well in \cite{Exp2,Exp1}
%we image small inclusions which mimic antipersonnel mines and IEDs. However,
%the function $c$ is discontinuous on the inclusion/background interface.

\item Instead of computing the initial approximation of the tail as in
Section \ref{sect:4.3}, we find its derivatives directly by solving problem %
\eqref{1033}--\eqref{103}. This enables us to avoid the error arising from
numerical differentiation. We recall that our algorithm requires the
derivatives of the tail function rather than this function itself. In
addition, the right hand side of formula (\ref{4.35}) might be non-positive.
Hence, we amend this formula in computations via using formula (\ref{8.2})
and a smoothing procedure (Section 7.3).

\item Our theory works only for the case when the data are given on the
entire boundary of the domain $\Omega .$ However, due to our target
applications in detecting and identifying explosives, we are also interested
in the case of backscatter data. Thus, we complement the backscatter data as
in (\ref{completion}).

\item We use the so-called \textquotedblleft data propagation procedure" in
Section \ref{sec:8.2}. This is a heuristic procedure, which, nevertheless,
is very effective and it is widely used in the optics community, see, e.g., 
\cite[Chapter 2]{Novot2012}. In particular, this procedure helps us to
reduce the search domain as in (\ref{8.1}). We also refer to~\cite{Buhan2013}
for a time-reversal technique that was exploited for reducing the
computational domain of a coefficient inverse problem.
\end{enumerate}

\section{Numerical study}

\label{sec:8}

We describe in this section some details of the numerical implementation of
the above globally convergent algorithm. Also, we present some
reconstruction results for both computationally simulated and experimental
data. It is convenient to denote in this section points $\mathbf{x}=\left(
x_{1},x_{2},x_{3}\right) $ as $\mathbf{x=}\left( x,y,z\right) .$

\subsection{Dimensionless variables}

\label{sec:8.1}

We want to demonstrate in our computations that our algorithm works with
ranges of parameters, which are realistic in our desired application in
imaging of antipersonnel mines and IEDs. Hence, we start from variables,
which have dimensions and make them dimensionless then. We are guided by our
experimental data in \cite{Exp1}.

Let 1 $cm$ = 1 centimeter. Consider the dimensionless variable $\mathbf{x}=%
\mathbf{x/}\left( 10\ cm\right) .$ We do not change notations for brevity.
For example, the dimensionless 0.6 means 6 cm. The experimental data of \cite%
{Exp1} are stable only in a small neighborhood of $3.1$ GHz. Motivated by
this fact, we will work on a corresponding interval of wavenumbers $%
[6.2,6.7] $ in our numerical study. Following the measurement arrangement of 
\cite{Exp1}, we assume in our numerical study that the backscatter data are
measured on a $100cm\times 100cm$ rectangle, which we call \textquotedblleft
measurement plane". We assume that this plane is 76 cm away from the front
face of the target to be imaged. Hence, in dimensionless variables we have
measurements on a $10\times 10$ rectangle.

\subsection{Data propagation}

\label{sec:8.2}

%backscatter data $u_{sc}\left( \mathbf{x},k\right) $ are measured on a
%rectangle, which is away from the target 75 $cm$, as mentioned above.
The data propagation process is applied to the function $u_{sc}(\mathbf{x}%
,k) $ on the measurement plane and it aims to approximate the data for $%
u_{sc}(\mathbf{x},k)$ on a plane, which is closer to the target. In other
words, we ``move" the data closer to the target. In doing so, we assume that
we know in advance some rough estimates of locations of targets. However, we
do not assume that we know exact locations. Such an advanced knowledge of
those estimates goes along well with the main principles of the
regularization theory \cite{Bak,BeilinaKlibanovBook,Engl,T}.

The data propagation process firstly reduces the computational domain for
our algorithm and secondly makes the scattered field data look more focused.
The latter feature also provides us a reasonable estimate for the $xy-$%
location of the target, which is important for the numerical implementation.
The data propagation is done using the so called angular spectrum
representation, which is a well known technique in optics, see, e.g., \cite[%
Chapter 2]{Novot2012}. We also refer to~\cite{Exp2,Exp1,TBKF2}, where this
process has been exploited for the preprocessing of the experimental data.
We display in Figure~\ref{fig 1a} and Figure~\ref{fig 1b} the absolute value
of the noisy scattered field $u_{sc}(\mathbf{x},k)$ before and after
propagation, respectively. The scatterer in this case consists of two
different inclusions as in Figure~\ref{fi:4}. From the propagated data in
Figure~\ref{fig 1b} we can see two clear peaks at the locations of the
inclusions. Comparison of Figure \ref{fig 1a} with Figure \ref{fig 1b}
demonstrates the effectiveness of the data propagation procedure.

%\textbf{In label for Figure 1 \textquotedblleft backscattered" should be
%\textquotedblleft backscatter"}

%The propagated data are complemented then on the rest of the boundary $%
%\partial \Omega $ as in (\ref{4.37}). Finally, so obtained boundary data are
%used as the input for the above algorithm.

\begin{figure}[h!]
\centering
\subfloat[\label{fig 1a} Magnitude of the backscatter
data with 15 \% noise]{\includegraphics[width=6.5cm]{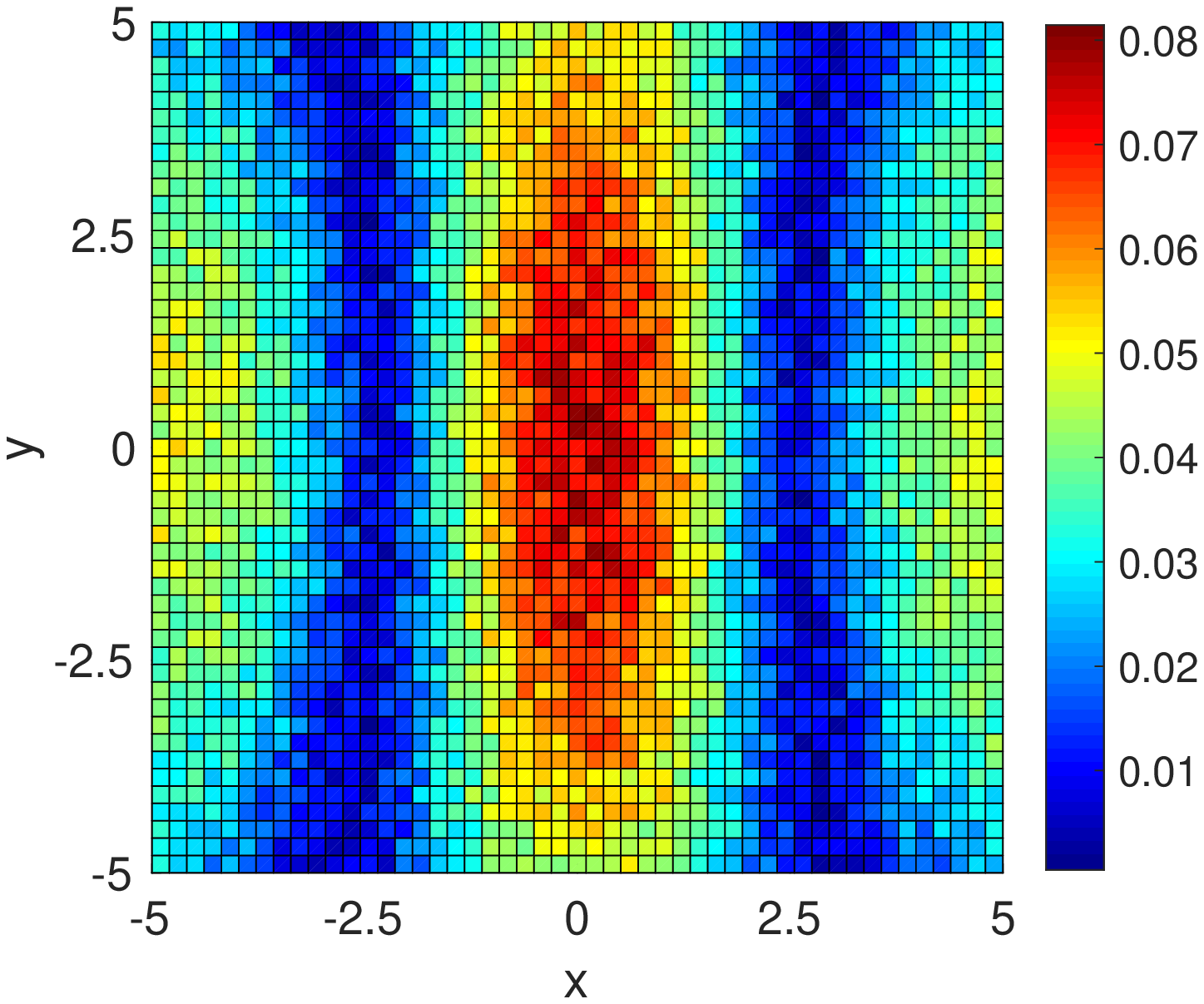}} 
\hspace{0.5cm} 
\subfloat[\label{fig 1b}Magnitude of the propagated
data]{\includegraphics[width=6.5cm]{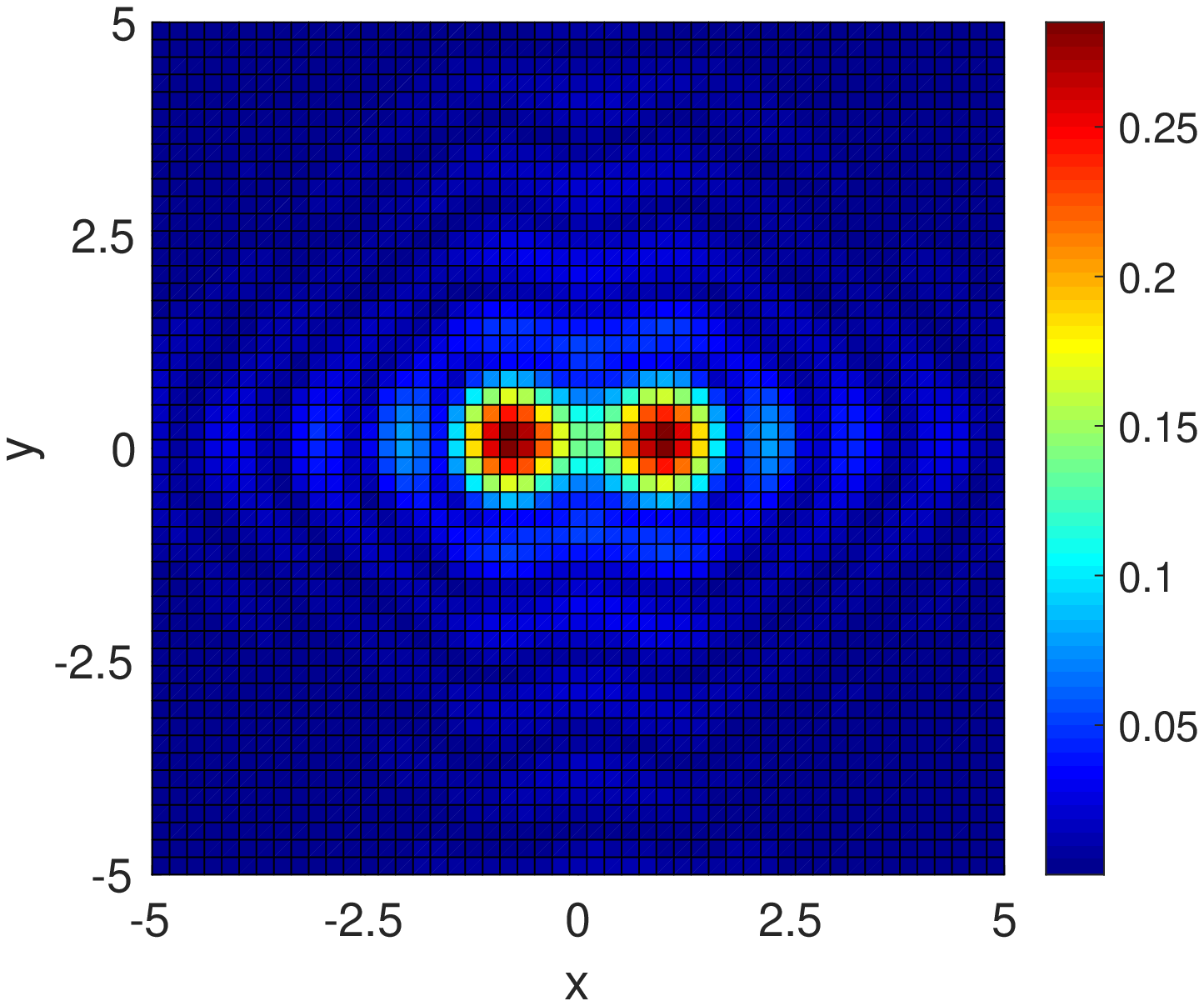}}
\caption{For $k = 6.48$, we present in (a) the absolute value of the noisy
backscattered field on the rectangle $(-5,5)^2\times \{z=-7.6\}$ and in (b)
the absolute value of the propagated data on the rectangle $(-5,5)^2\times
\{z=-0.75 \}$.}
\label{fi:1}
\end{figure}

\subsection{Some computational details}

\label{sec:8.3}

%\begin{center}
%\textbf{Liem,}
%\end{center}

%\textbf{1. In the formula for }$g_{noise}$\textbf{\ you mean total field of
%scattering field? Specify in the text.}
%
%\textbf{2. Am I correct saying below \textquotedblleft The }$L_{2}$\textbf{\
%norm is taken over the measurement surface"?}
%
%\textbf{3. You actually measure and propagate only scattered data }$u_{sc}.$%
%\textbf{\ I have said this in the beginning of section 8.2. So, when
%considering the derivative }$\partial _{k}f$\textbf{\ is it for }$u$\textbf{%
%\ or for }$u_{sc}?$\textbf{\ Also, does }$f=u$ \textbf{or} $f=u_{sc}?$ 
%\textbf{Amend the text below correspondingly.}
%
%\textbf{4. The question about the data propagation for the case of complete
%data should be clarified. Indeed, we propagate only backscatter data.
%Please do so in the corresponding section. In fact, it is unclear which
%exactly data you use in this case: separately on propagated plane and on
%other parts of the boundary. }
%
%\textbf{5. Add one more subsection where you specify how do you calculate
%the first tail: }$\nabla V_{0}$\textbf{\ instead of }$V_{0}.$\textbf{\ As a
%reason for this, say that we want to avoid noise linked with computations of
%derivatives. Explain how do you get boundary conditions for the first
%derivatives. In particular, on the propagated plane. From the theoretical
%point of view refer to (\ref{2.6}) for the Neumann condition. But also
%explain how do you do this in practical computations.}
%
%\textbf{6. Text from section \ 5.3 of the paper with Aleks \cite{Exp2}
%should help in item 5.}

In our computations, $[\underline{k},\overline{k}]=[6.2,6.7]$ and $N=9$.
This means that we have 10 wavenumbers in this interval and from now on we
refer to them as $k_{n}$ for $n=0,\dots ,9$. All computationally simulated
data considered below are perturbed by 15$\%$ of random artificial noise.
More precisely, for each $k_{n}$ and for each grid point $\mathbf{x}$ on the
measurement plane, the total field data were perturbed as 
\begin{equation}
g_{noise}(\mathbf{x},k_{n})=g(\mathbf{x},k_{n})+0.15\Vert g(\mathbf{x}%
,k_{n})\Vert _{L^{2}}\frac{\sigma (\mathbf{x},k_{n})}{\Vert \sigma (\mathbf{x%
},k_{n})\Vert _{L^{2}}},  \label{4}
\end{equation}%
where $\sigma (\mathbf{x},k_{n})=\sigma _{1}(\mathbf{x},k_{n})+i\sigma _{2}(%
\mathbf{x},k_{n})$, and $\sigma _{1}(\mathbf{x},k_{n})$ and $\sigma _{2}(%
\mathbf{x},k_{n})$ are random numbers, uniformly distributed on $(-1,1)$.
The $L^{2}$ norm is taken over the measurement plane. Now assume that $f(%
\mathbf{x},k)$ is the data obtained after the propagation. More precisely,
let $f(\mathbf{x},k)$ be the sum of the propagated scattered field and the
incident field. In our algorithm we need the derivative of $f(\mathbf{x},k)$
with respect $k$. We compute it via the finite difference as 
\begin{equation*}
\frac{\partial f(\mathbf{x},k_{n})}{\partial k}=\frac{f(\mathbf{x},k_{n})-f(%
\mathbf{x},k_{n-1})}{h},\quad n=1,2,\dots ,9.
\end{equation*}%
Even though the differentiation of the noisy data is an ill-posed problem,
this approximation turned out to work well in our numerical implementation.
This is partly because the propagated data (Figure~\ref{fig 1b}) have less
noise than the given data (Figure \ref{fig 1a}). 
%which looks less noisy than the original data (Figure~\ref{fi:1}). Recall that we only 
%propagate the backscatter measurement.

Each of our targets to be imaged has its front face at the plane $\left\{
z=0\right\} $. The backscatter data $u_{sc}\left( \mathbf{x},k\right) $ are
measured at the plane $\left\{ z=-7.6\right\} $ on the above mentioned
rectangle 
\begin{equation}
\left\{ \left( x,y,z\right) :\left( x,y\right) \in \left( -5,5\right) \times
\left( -5,5\right) ,z=-7.6\right\} .  \label{7}
\end{equation}%
Next, the backscatter data are propagated to the plane $\left\{
z=-0.75\right\} .$ We have noticed that the propagated function $u_{sc}$ is
close to zero outside of the rectangle 
\begin{equation}
\Gamma =\left\{ \left( x,y,z\right) :\left( x,y\right) \in \left(
-2.5,2.5\right) \times \left( -2.5,2.5\right) ,z=-0.75\right\} ,  \label{6}
\end{equation}%
see Figure~\ref{fi:1}(b). Hence, our computational domain is 
\begin{equation}
\Omega =\left\{ \left( x,y,z\right) \in (-2.5,2.5)\times (-2.5,2.5)\times
(-0.75,4.25)\right\}  \label{8.5}
\end{equation}%
and we treat $\Gamma =\overline{\Omega }\cap \left\{ z=-0.75\right\} $ as
the surface where we have the propagated data.

We have observed that the data propagation process also provides a very good
assistance in determining the locations of the targets of interest in the $%
xy-$plane. More precisely, the peaks that can be seen in magnitude of the
propagated data (Figure~\ref{fig 1b}), are located exactly at the location
of those targets in the $xy-$plane. Therefore, we can consider our search
domain in the $xy-$plane as 
\begin{equation}
\Omega _{T}=\cup _{j=1}^{J}\{(x,y,z):|f(\mathbf{x},k)|>0.7\max_{O_{j}}|f(%
\mathbf{x},k)|,z=-0.75\},  \label{8.1}
\end{equation}%
%
%
%
%
%
%
%
%
%\begin{equation*}
%\Omega _{T}=  \left\{ (x,y): |f({\bf x},k)| > 0.7\max |f({\bf x},k)|  \right\},
%\end{equation*}
where $J$ is the number of peaks of $|f(\mathbf{x},k)|$ on the set $\Gamma $
and $O_{j}$ is a neighborhood of the point where $|f(\mathbf{x},k)|$ attains
its $j-$th peak. These neighborhoods should not intersect with each other.
The truncation value 0.7 was chosen by trial-and-error tests.

When running our algorithm, we do not assume the knowledge of the fact that
the front face of any target is located on the plane $\left\{ z=0\right\} .$
Still, as we have stated above, we assume the knowledge of some rough
estimates of linear sizes of targets of interest. In the $z-$direction we
search for the target in the range $(-0.75,1)$. This search range is
motivated by our desired applications for detection of explosives. Indeed, $%
z\in (-0.75,1)$ means that the linear size of the target in the $z-$%
direction does not exceed 17.5 cm while linear size of antipersonnel mines
and IEDs are typically between 5 cm and 15 cm. Note that similar ranges were
used in~\cite{Exp2, Exp1, TBKF2, TBKF1}. Thus, our search domain for the
target is $\Omega _{T}\times (-0.75,1)$. In addition, the right hand side of
the formula (\ref{4.35}) for the function $c_{n,i}$ might be complex valued
in real computations. Hence, at each iteration we apply the following
truncation to the coefficient $c_{n,i}\left( \mathbf{x}\right) $ found by (%
\ref{4.35}) 
\begin{equation}
\widetilde{c}_{n,i}(\mathbf{x}):=%
\begin{cases}
\max \left( |c_{n,i}(\mathbf{x})|,1\right) , & \mathbf{x}\in \Omega
_{T}\times (-0.75,1), \\ 
1, & \text{elsewhere}.%
\end{cases}
\label{8.2}
\end{equation}%
Next, the function $\widetilde{c}_{n,i}(\mathbf{x})$ is smoothed by \texttt{%
smooth3} in MATLAB with the Gaussian option. As a result, a new function $%
c_{n,i}(\mathbf{x})$ is obtained. We do not change notation here for
brevity. Then, the so obtained function $c_{n,i}(\mathbf{x})$ is used for
solving the Lippmann-Schwinger equation in the algorithm~of Section \ref%
{sec:5.2}.

We solve the Lippmann-Schwinger integral equations (\ref{8.8}) using a
spectral method developed in~\cite{Lechl2014, Vaini2000}. This method relies
on the periodization technique introduced by Vainikko, which enables the use
of the fast Fourier transform in the approximation scheme. For solving the
integral equation in our numerical implementation, the domain $\Omega $
defined in (\ref{8.5}) is discretized with the uniform mesh size 0.067 in
three dimensions. The code for this numerical solver was done in MATLAB. The
boundary value problem~(\ref{4.29})--(\ref{4.30}) is solved by the finite
element method. We use tetrahedral mesh with a local adaptive refinement for
the domain $\Omega $. We coded the numerical solver using FreeFem++~\cite%
{Hecht2012}, which is a standard software designed with a focus on solving
partial differential equations using finite element methods. We refer to 
\emph{www.freefem.org} for more information about FreeFem++. All of the
figures that are displayed below in this paper were done by visualization
tools in MATLAB.

%We observe from data preprocessing that the location of a target in the 
%$xy$-plane can be estimated from the propagated data. Indeed, we define 
%$\Omega _{T}$ as 
%\begin{equation*}
%\Omega _{T}=\left\{ (x,y):|f({\bf x},k)|>0.7\max
%|f({\bf x},k)|\right\},
%\end{equation*}
%where $f({\bf x},k)$ is the propagated data. Note that for each target
%the absolute value of $f(\x,k)$ has a positive peak
%whose location is the same for all $k\in [\underline{k},\overline{k}]$,
%see Figure~\ref{fi:5}. The truncation value 0.7 was chosen based on
%trial-and-error tests. We observed that 
%$\Omega _{T}$ provides a reasonable approximation for the $xy$-location of a
%target. The same truncation was applied to all targets. Hence, it is not
%biased. Using $\Omega _{T}$ and the assumption that we seek for the target
%in $(-0.75,1)$ of the $z$-direction,

\subsection{The first tail function $V_{0}( \mathbf{x}) $}

\label{sec:5.3}

It is clear from the algorithm (at $n=1$) that we use only the gradient $%
\nabla V_{0}$, instead of the first tail function $V_{0}$. Since we set $%
\Delta V_{0}=0$ in Section~\ref{sect:4.3}, we find the derivatives of $V_{0}$
by solving the following problem instead of the problem (\ref{1060}) 
\begin{align}
& \Delta (\partial _{x_{j}}V_{0})=0,\quad \text{in }\Omega ,  \label{1033} \\
& \partial _{x_{j}}V_{0}=\frac{\partial _{x_{j}}u(x,\overline{k})}{u(x,%
\overline{k})},\quad \text{on }\partial \Omega ,\quad j=1,2,3.  \label{103}
\end{align}%
Here, boundary condition \eqref{103} is from \eqref{4.3}. Solving %
\eqref{1033}--\eqref{103} also helps us avoid the error associated with the
numerical differentiation of the first tail function $V_{0}(x)$

%Since we set $\Delta V_{0}=0$ in Section~\ref{sect:4.3}, it is clear from 
%the algorithm (at $n=1$) that we use only the gradient $\nabla V_{0}$ of the first
%tail function $V_{0}$. Thus, to avoid the error associated with the
%numerical differentiation of the first tail function $V_{0}(x)$, we
%numerically solve the following problem instead of the problem (\ref{1060}) 
%\begin{align}
%&\Delta (\partial _{x_{j}}V_{0})=0,\quad \text{in } \Omega \\
%&\partial _{x_{j}}V_{0}=\frac{\partial _{x_{j}}u(x,\overline{k})}{u(x,\overline{k})},\quad \text{on } \partial \Omega,
%\quad j=1,2,3.  \label{103}
%\end{align}
For the case where only the backscatter data are given on $\Gamma $, we
simply complete the data on the other parts of the boundary by the incident
plane wave $e^{ikx_{3}}$, see~\eqref{completion}. In doing so, we
approximately assume that $u\left( \mathbf{x},k\right) =e^{ikx_{3}}$  for
points $\mathbf{x}$ located in the domain $\Omega $ near $\partial \Omega
\setminus \Gamma$. Therefore, this completion leads to the boundary
condition for $\partial _{x_{j}}V_{0}$ on $\partial \Omega \setminus \Gamma $
as 
\begin{align}
& \partial _{x_{j}}V_{0}=0,\quad \text{on }\partial \Omega \setminus \Gamma
,\quad j=1,2,  \label{104} \\
& \partial _{x_{3}}V_{0}=i\overline{k},\quad \text{on }\left\{
x_{3}=4.25\right\} \cap \partial \Omega .  \label{105}
\end{align}%
%
%
%
%We now comment about boundary conditions (\ref{103})-(\ref{105}). Conditions
%(\ref{104}) and (\ref{105}) are approximate ones for $x\in \left\{
%x_{3}=4.25\right\} \cap \partial \Omega .$ In addition, it follows from (\ref%
%{25}) that conditions (\ref{104}) are approximate ones for those parts of
%the rest of $\partial \Omega \setminus \Gamma $ which are orthogonal to the $%
%x_{j}-$axis. In the latter case they basically mean that the function $V_{0}$
%does not change in the direction $x_{j}$ near of those parts of the
%boundary. 
It is seen from condition (\ref{103}) that we need the boundary data for
functions $\partial _{x_{j}}u(\mathbf{x},\overline{k})$, $x\in \Gamma ,$ $%
j=1,2,3$. The derivative with respect to $x_{3}$ for $x\in \Gamma $ is
calculated as 
\begin{equation*}
\partial _{x_{3}}u(\mathbf{x},\overline{k})=\partial _{x_{3}}\tilde{g}(%
\mathbf{x},\overline{k})+i\overline{k}\exp (i\overline{k}x_{3}),
\end{equation*}%
where $\tilde{g}(\mathbf{x},\overline{k})$ is the propagated backscatter
field. The function $\partial _{x_{3}}\tilde{g}(\mathbf{x},\overline{k})$
was found by propagating the backscatter field to the two nearby planes $%
P_{p}=\{x_{3}=-0.75\}$ and $P_{p_{\varepsilon }}=\{x_{3}=-0.75+\varepsilon
\} $, subtracting the results from each other and dividing by $\varepsilon $%
, where we took $\varepsilon =0.1$. Derivatives $\partial _{x_{j}}u(\mathbf{x%
},\overline{k})$, $j=1,2$, for $\mathbf{x}\in \Gamma $ were calculated using
FreeFem++. The propagated data looked very smooth. Again, we have not
observed any instabilities.

\subsection{The stopping criteria and the choice of the final result}

\label{sec:8.4}

We address in this section the rules for stopping the iterations and the
choice of the final result for our algorithm. These rules are guided by the
convergence estimate (\ref{5.161}) of Theorem \ref{thm main} and also by the
trial-and-error testing. We recall that estimate (\ref{5.161}) only claims
that functions $c_{n,i}$ are located in a sufficiently small neighborhood of
the true solution if the number of iterations is not large. However, this
theorem does not claim that these functions tend to the exact solution, also
see Theorem 2.9.4 in~\cite{BeilinaKlibanovBook} and Theorem 5.1 in~\cite{BK}
for similar results. Therefore, our stopping criteria are derived
computationally. We note that the same stopping rules have been used in~\cite%
{Exp1} for the experimental data. For the convenience of the reader we
present these rules, which are taken from~\cite{Exp1}.

We have the stopping criterion for the inner iterations and another one for
the outer iterations. Denote by $e_{n,i}$ the relative error between the two
computed coefficients corresponding by two consecutive inner iterations of
the $n-$th outer iteration 
\begin{equation}
e_{n,i}=\frac{\Vert c_{n,i}-c_{n,i-1}\Vert _{L^{2}(\Omega )}}{\Vert
c_{n,i-1}\Vert _{L^{2}(\Omega )}},\quad i=2,3,\dots  \label{eq:stop}
\end{equation}%
%
%
%
%
%
%
%
%
%We also define that the last and first inner iteration of the $n$-th and $(n+1)$-th outer
%iteration, respectively, are two consecutive ones. For example the iterations
%$(n,i) = (3,3)$ and $(4,1)$
We consider the $n-$th and $(n+1)-$th outer iterations which contains $I_{1}$
and $I_{2}$ inner iterations respectively. The sequence of relative errors
associated to these two outer iterations is defined by 
\begin{equation}
e_{n,2},\dots ,e_{n,I_{1}},\widetilde{e}_{n+1,1},e_{n+1,2},\dots
,e_{n+1,I_{2}},  \label{eq:sequence}
\end{equation}%
where 
\begin{equation*}
\widetilde{e}_{n+1,1}=\frac{\Vert c_{n+1,1}-c_{n,I_{1}}\Vert _{L^{2}(\Omega
)}}{\Vert c_{n,I_{1}}\Vert _{L^{2}(\Omega )}}.
\end{equation*}

The inner iterations with respect to $i$ of the $n-$th outer iteration in
the above algorithm are stopped when either $e_{n,2}<10^{-6}$ or $i=3$. We
have observed in our numerical experiments that the reconstruction results
are essentially the same when we use either 3 or 5 for the maximal number of
inner iterations.

Concerning the outer iterations with respect to $n$, it can be seen from the
stopping rule for the inner iterations that each outer iteration consists of
at least two and at most three inner iterations. Equivalently, the error
sequence~\eqref{eq:sequence} has at least three and at most five elements.
We stop the outer iterations if there are two consecutive outer iterations
for which their error sequence~\eqref{eq:sequence} has three consecutive
elements less than or equal to $5\times 10^{-4}$. Again, we have no rigorous
justification for these stopping rules; they rely on the content of the
convergence theorem~Theorem \ref{thm main} and on trial-and-error testing.
We point out, however, that we do trial-and-error only for one target, which
we call \textquotedblleft reference target". Then we use the same rules for
all other targets.

We choose the final result for $c_{comp}(\mathbf{x})$ by taking the average
of its approximations $c_{n,i}(\mathbf{x})$ corresponding to the relative
errors in~\eqref{eq:sequence} that meet the stopping criterion for outer
iterations. The computed dielectric constant is determined as the maximal
value of the computed $c_{comp}(\mathbf{x})$, 
\begin{equation}
c_{\max }=\max c_{comp}(\mathbf{x}).  \label{8.6}
\end{equation}%
We have observed in our numerical studies that we need no more than five
outer iterations to obtain the final result.

\subsection{Numerical examples with complete data}

\label{sec:8.5}

The target we consider for this example is a cube of the size 0.6 defined by 
\begin{equation*}
D=(-0.3,0.3)^{2}\times (0,0.6)\subset \mathbb{R}^{3}.
\end{equation*}%
We define the coefficient $c(\mathbf{x})$ as 
\begin{equation}
c(\mathbf{x}):=%
\begin{cases}
5, & \mathbf{x}\in D, \\ 
1, & \text{elsewhere.}%
\end{cases}
\label{eq:coefficient1}
\end{equation}%
We assume that the backscatter data $u_{sc}(\mathbf{x},k)$ are given on the
rectangle (\ref{7}), and on this rectangle the function $g(\mathbf{x},k)$ in
(\ref{4}) is replaced with the function $u_{sc}(\mathbf{x},k)$. We also
assume that the function $u(\mathbf{x},k):=g(\mathbf{x},k)$ in (\ref{4}) is
given on the part $\partial \Omega \setminus \Gamma $ of the domain $\Omega
, $ where $\Gamma $ and $\Omega $ are defined in (\ref{6}) and (\ref{8.5})
respectively. First, we use the data propagation and obtain the function $%
u_{sc}(\mathbf{x},k)$ on the rectangle $\Gamma $ this way. Next, we consider 
$\Omega $ as the computational domain with the measurement data given on its
entire boundary $\partial \Omega $.

Figure 2 displays the reconstruction result using our algorithm. It can be
seen that both the maximal value and the location of the coefficient $c(%
\mathbf{x})$ are well reconstructed. More precisely, for the computed
coefficient $c_{comp}(\mathbf{x})$ we have 
\begin{equation}
c_{\max }=4.9,  \label{5}
\end{equation}%
where $c_{\max }$ is defined in (\ref{8.6}). Hence, it follows from (\ref%
{eq:coefficient1}) and (\ref{5}) that the relative error between the
computed maximal value and the exact maximal value of the coefficient $%
c\left( \mathbf{x}\right) $ is only 2$\%$. We visualize the shape and
location of the exact target and the reconstructed one using \textit{%
isosurface} in MATLAB. The isovalue was chosen as 50$\%$ of the maximal
value of $c_{comp}\left( \mathbf{x}\right) $, and this isovalue is applied
to all the other examples in this paper when using isosurface visualization.

\begin{figure}[h!!!]
\centering
\subfloat[Projection of $c(\x)$ on
$\{y=0\}$]{\includegraphics[width=6cm]{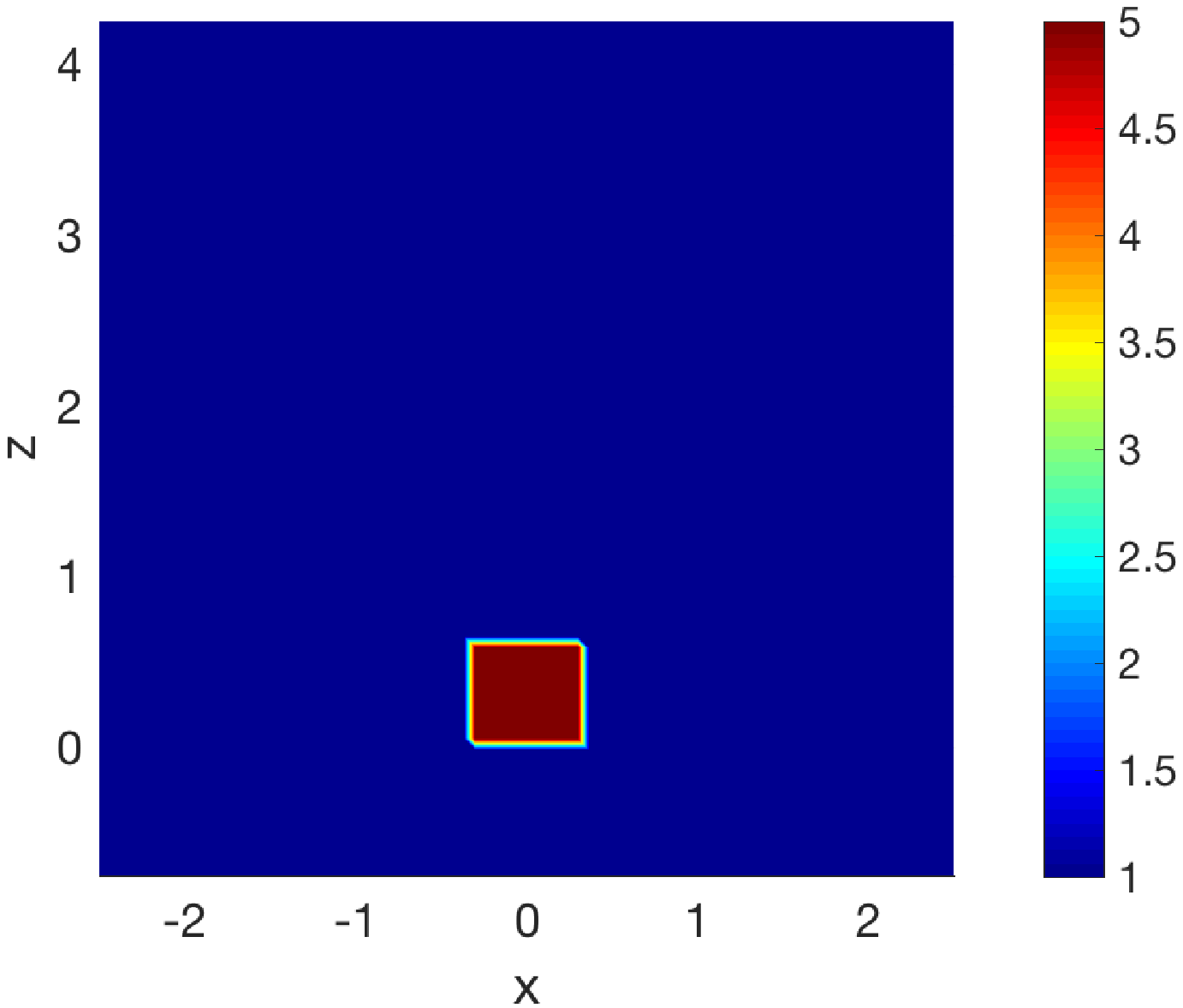}} \hspace{0.5cm} 
\subfloat[Projection of $c_{comp}(\x)$ on
$\{y=0\}$]{\includegraphics[width=6cm]{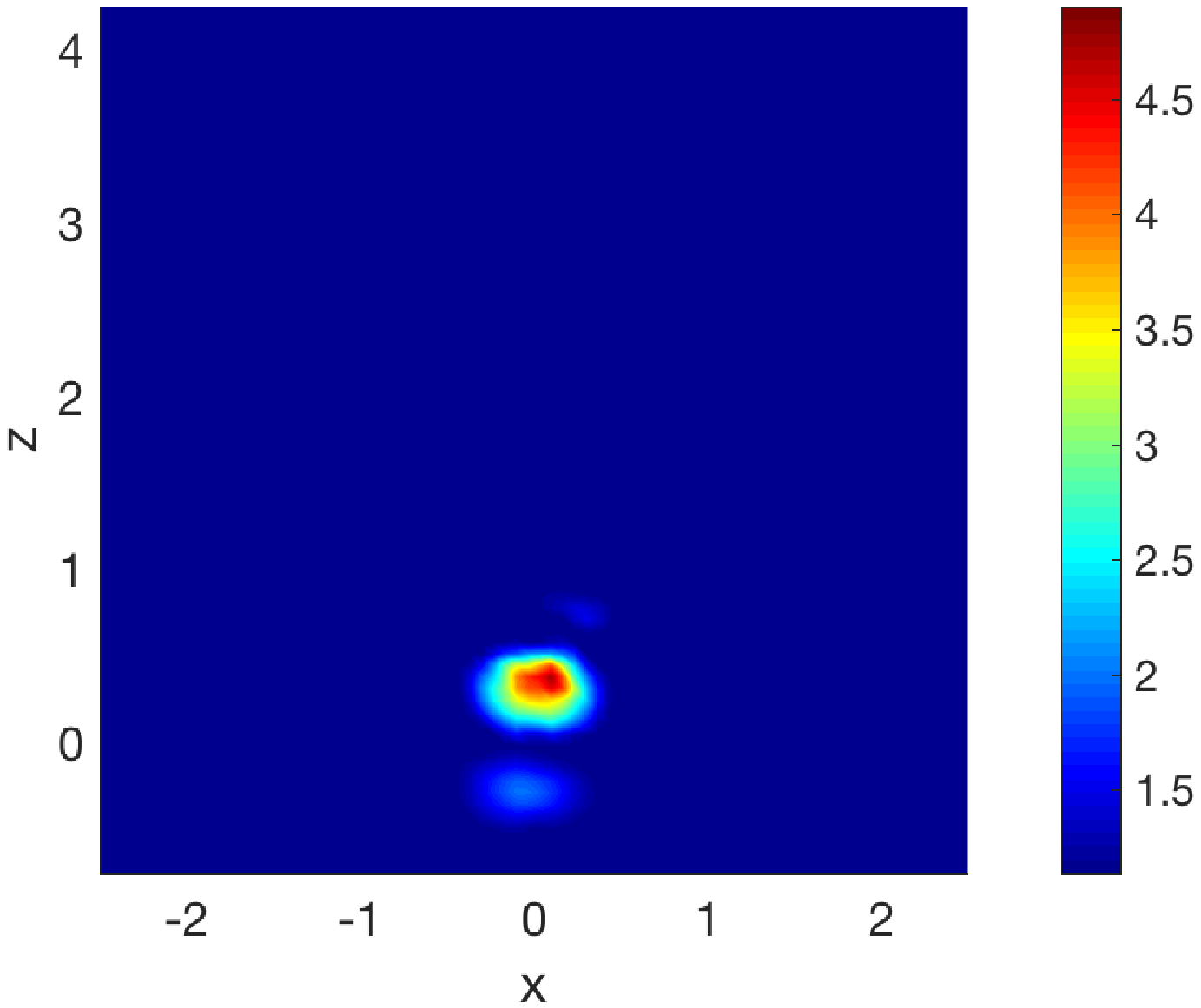}} \vspace{-0.0cm} %
\subfloat[Exact geometry]{\includegraphics[width=6cm]{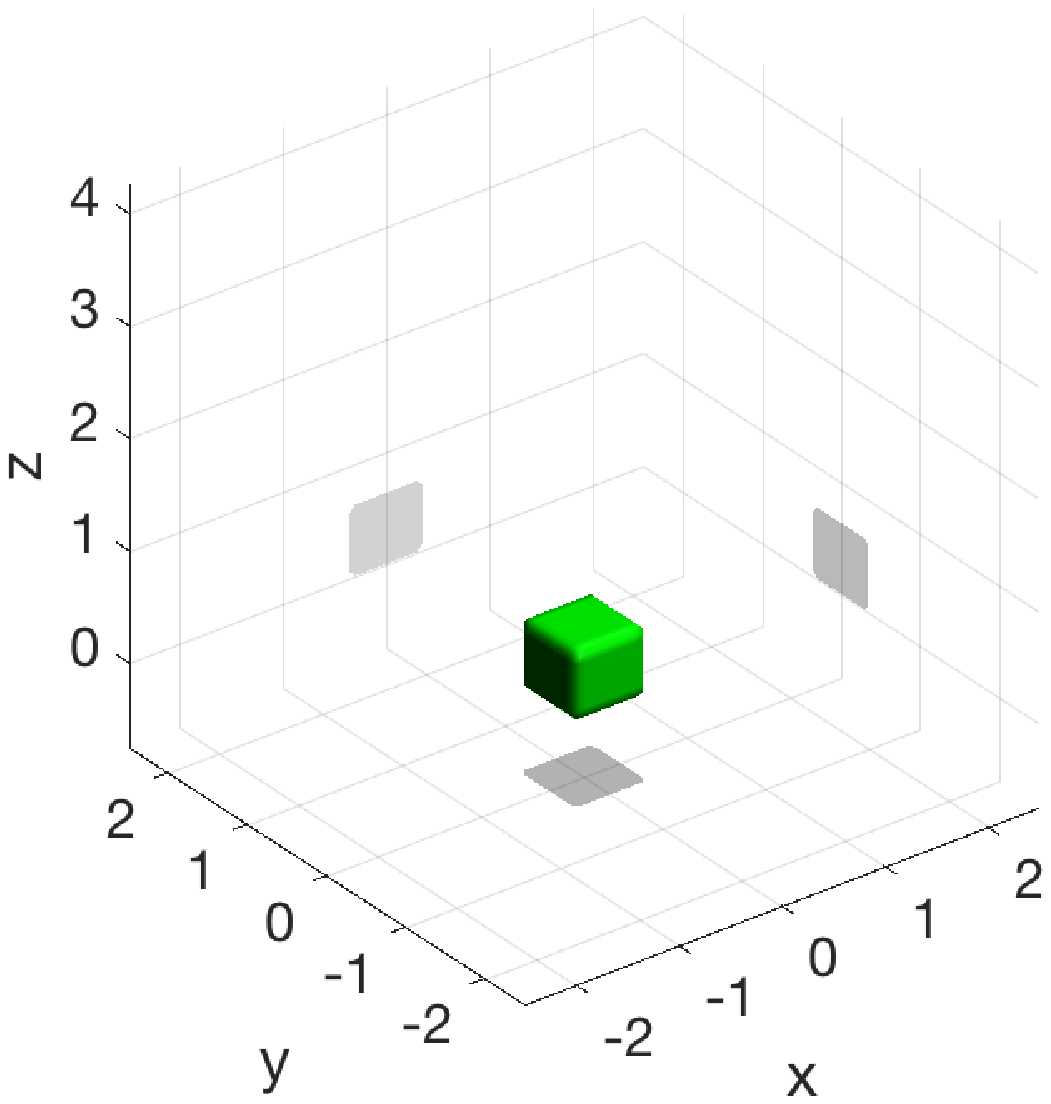}} 
\hspace{0.5cm} 
\subfloat[Reconstructed
geometry]{\includegraphics[width=6cm]{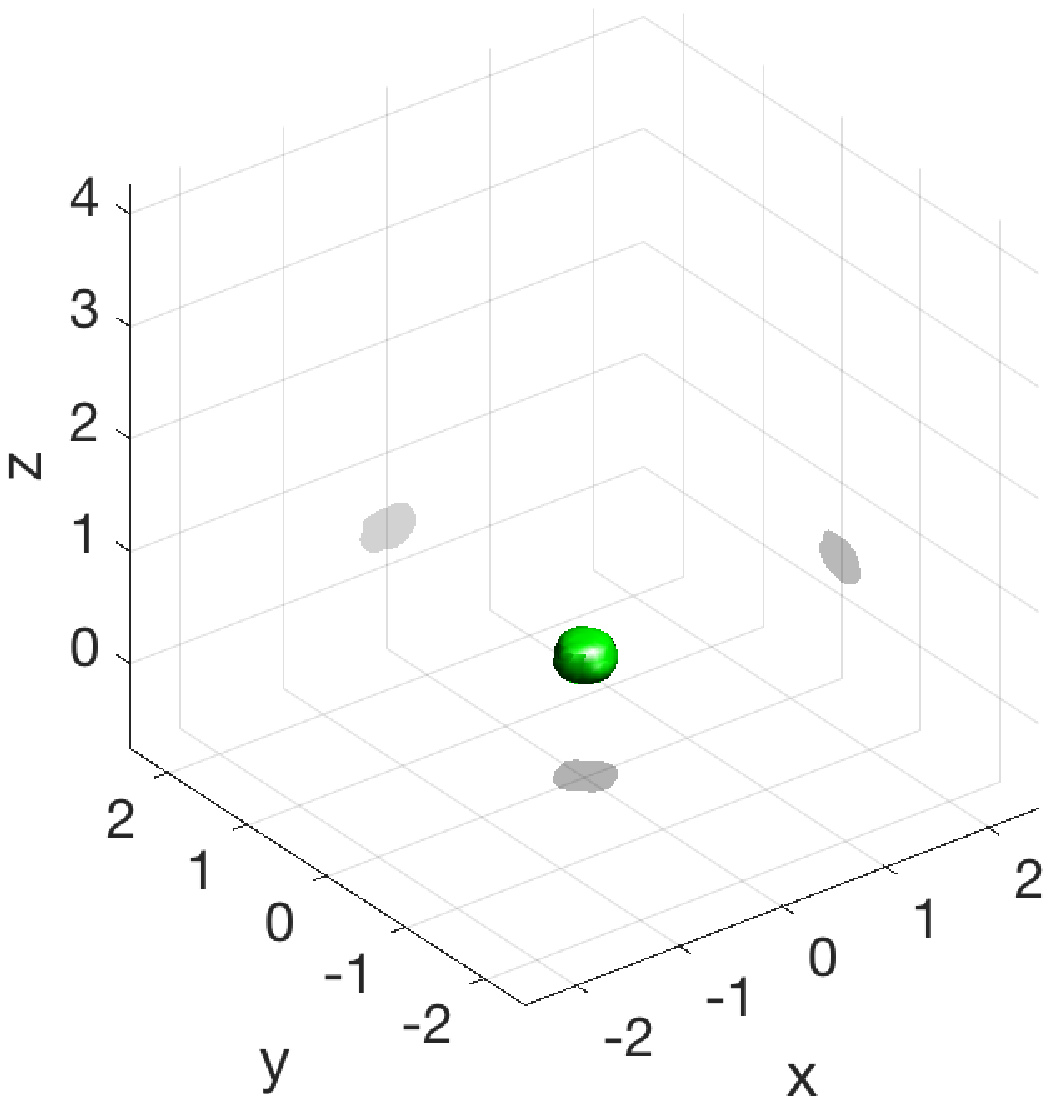}}
\caption{Visualizations of the exact coefficient $c(\mathbf{x})$ in~ 
\eqref{eq:coefficient1} (left) and the reconstructed coefficient $c_{comp}(%
\mathbf{x})$ (right) for the case of complete data with 15$\%$ artificial
noise. The first row is the projection of $c(\mathbf{x})$ and $c_{comp}(%
\mathbf{x})$ on $\{y=0\}$. The last row is a 3D isosurface, with isovalue
2.45, of the exact and reconstructed geometry of the target using MATLAB.}
\label{fi:2}
\end{figure}

\subsection{Numerical examples with backscatter data}

\label{sec:8.6}

We examine in this section the performance of our algorithm for the case of
backscatter data. This case is of our main interest, which is motivated by
our desired application in detecting and identifying of mines and IEDs,
where one typically uses only the backscatter measurement in the stand off
detection. More precisely, by backscatter data we mean that we are only
given the function $u_{sc}\left( \mathbf{x},k\right) $ on the rectangle (\ref%
{7}). Using the data propagation, we can consider the computational domain $%
\Omega $ in~(\ref{8.5}), where we have the backscatter data on the rectangle 
$\Gamma $ defined in (\ref{6}).

Since our method theoretically needs the data on the entire boundary of $%
\Omega $, we complete the missing data on the other parts of $\Omega $ by
the corresponding solution of the forward problem in the homogeneous medium,
where $c(\mathbf{x})=1$. In other words, assuming that $g(\mathbf{x}%
,k)=e^{ikz}+u_{sc}\left( \mathbf{x},k\right) $ is the data given on $\Gamma
, $ we extend it on the entire boundary $\partial \Omega $ as 
\begin{equation}
g(\mathbf{x},k):=%
\begin{cases}
g(\mathbf{x},k), & \mathbf{x}\in \Gamma , \\ 
e^{ikz}, & \mathbf{x}\in \partial \Omega \setminus \Gamma .%
\end{cases}
\label{completion}
\end{equation}%
We remark that data completion methods are widely used for inverse problems
with incomplete data. The data completion~\eqref{completion} is a heuristic
technique relying on the successful experiences of \cite%
{Exp2,Exp1,TBKF2,TBKF1} when working with globally convergent numerical
methods for experimental data.

As the first example for the case of backscatter data, we present in Figure~%
\ref{fi:3} the reconstruction results of the coefficient $c(\mathbf{x})$
defined in~\eqref{eq:coefficient1} for this case. It can be easily seen
again that the location is well reconstructed. For the maximal value we have 
\begin{equation*}
c_{\max }=4.65,
\end{equation*}%
which implies that the relative error between the exact value and the
computed value is 7$\%$. \vspace{0cm}Thus, the error has increased from 2\%
for the case of complete data to 7\% for the case of backscatter data only. 
\begin{figure}[h!]
\centering
\subfloat[Projection of $c_{comp}(\x)$ on
$\{y=0.1\}$]{\includegraphics[width=6cm]{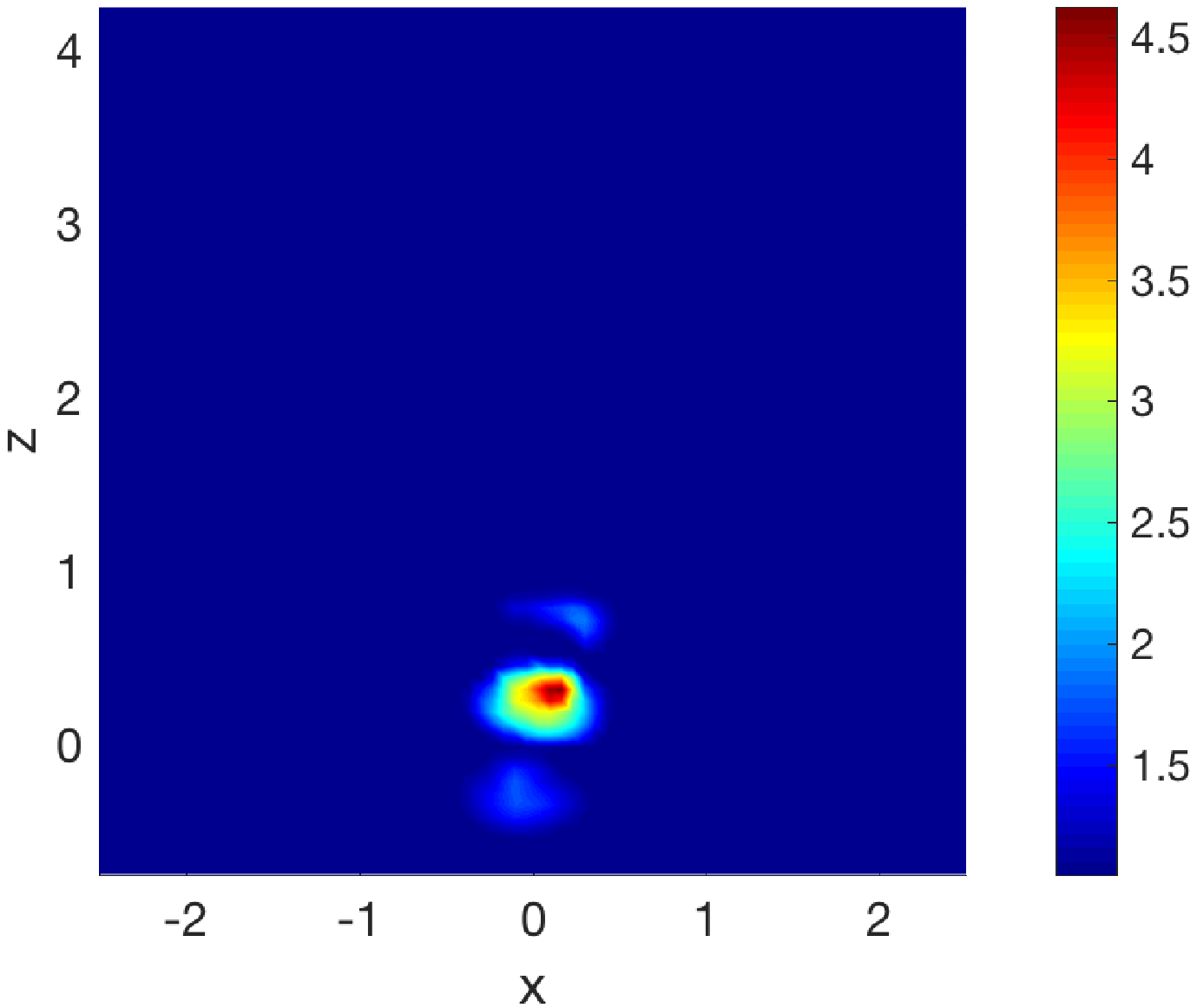}} \hspace{0.5cm} 
\subfloat[Reconstructed geometry
]{\includegraphics[width=6cm]{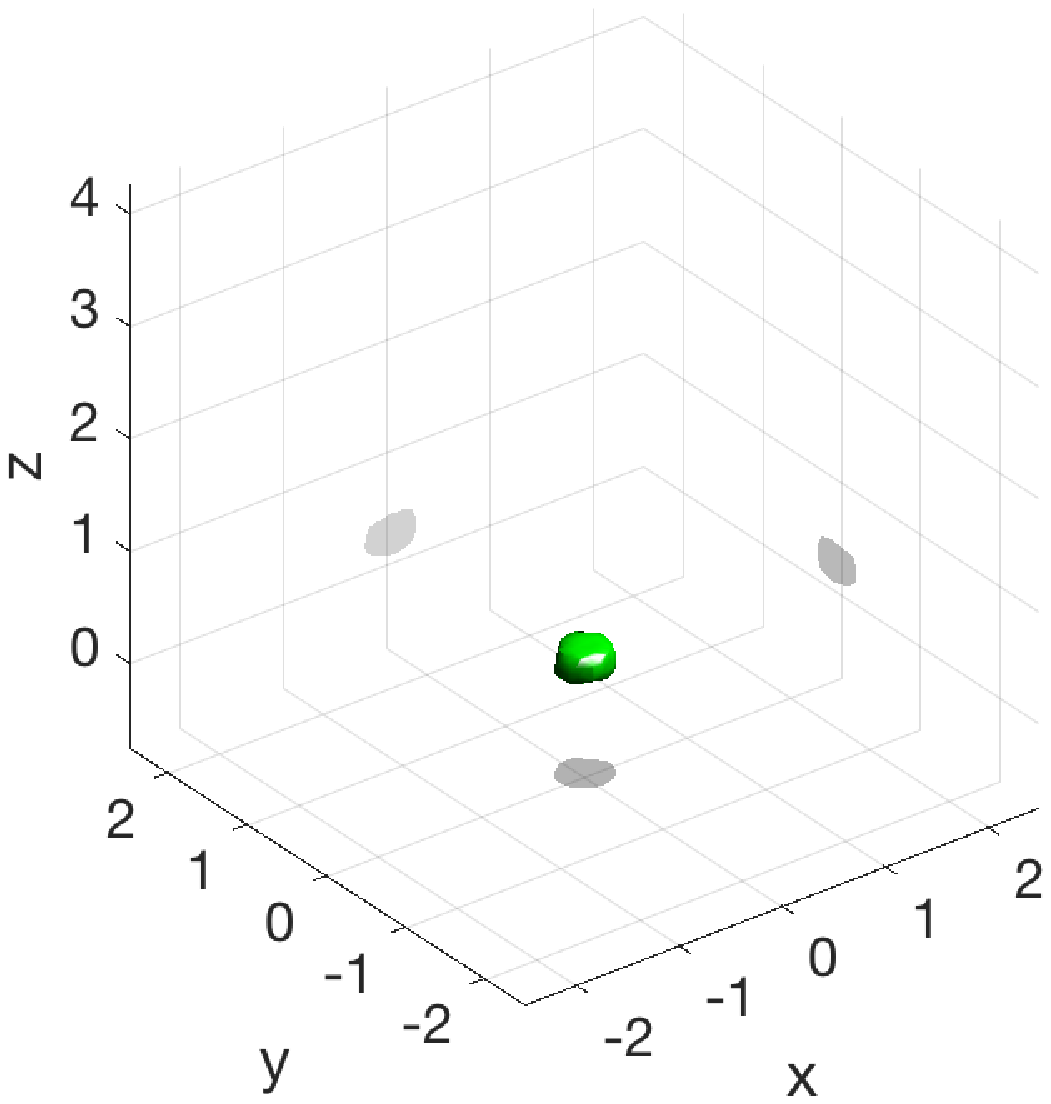}}
\caption{Reconstruction result for the coefficient $c(\mathbf{x})$ in~ 
\eqref{eq:coefficient1} with backscatter data. The left picture is the
projection of $c_{comp}(\mathbf{x})$ on $\{y=0\}$. The right one is the
reconstructed geometry of the target.}
\label{fi:3}
\end{figure}

In the second example we consider the target which consists of two similar
but separate cubes.\ The length of the side of each cube is 0.4. The
coefficient $c(\mathbf{x})$ is defined as 
\begin{equation}
c(\mathbf{x}):=%
\begin{cases}
5, & \mathbf{x}\in D_{1}\cup D_{2}, \\ 
1, & \text{elsewhere},%
\end{cases}
\label{eq:coefficient2}
\end{equation}%
\begin{equation}
D_{1}=(-1,-0.6)\times (-0.2,0.2)\times (0,0.4),\quad D_{2}=(0.6,1)\times
(-0.2,0.2)\times (0,0.4).  \label{9}
\end{equation}%
The reconstruction result for this case is presented in Figure~\ref{fi:4}.
The pictures in the latter figure show a good accuracy of reconstructed
locations. Furthermore, we have 
\begin{equation*}
\max_{D_{1}}\{c_{comp}(\mathbf{x})\}=4.80,\quad \max_{D_{2}}\{c_{comp}(%
\mathbf{x})\}=5.15.
\end{equation*}%
This means the relative errors are 4$\%$ and $3\%$ for the computed values
of the inclusions in $D_{1}$ and $D_{2}$ respectively.

\begin{figure}[h!]
\centering
\subfloat[Projection of $c(\x)$ on
$\{y=0\}$]{\includegraphics[width=6cm]{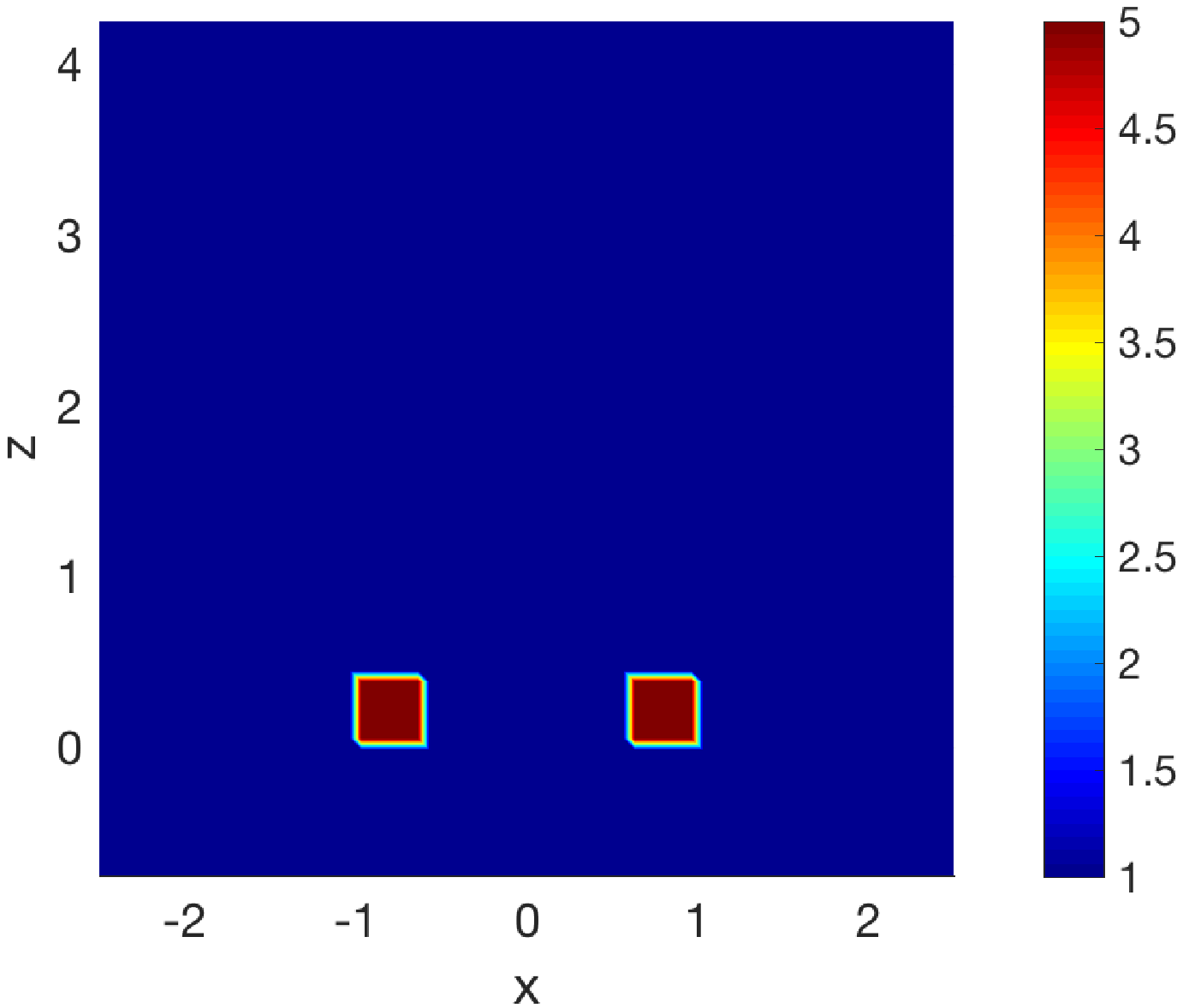}} \hspace{0.5cm} 
\subfloat[Projection of $c_{comp}(\x)$ on
$\{y=0\}$]{\includegraphics[width=6cm]{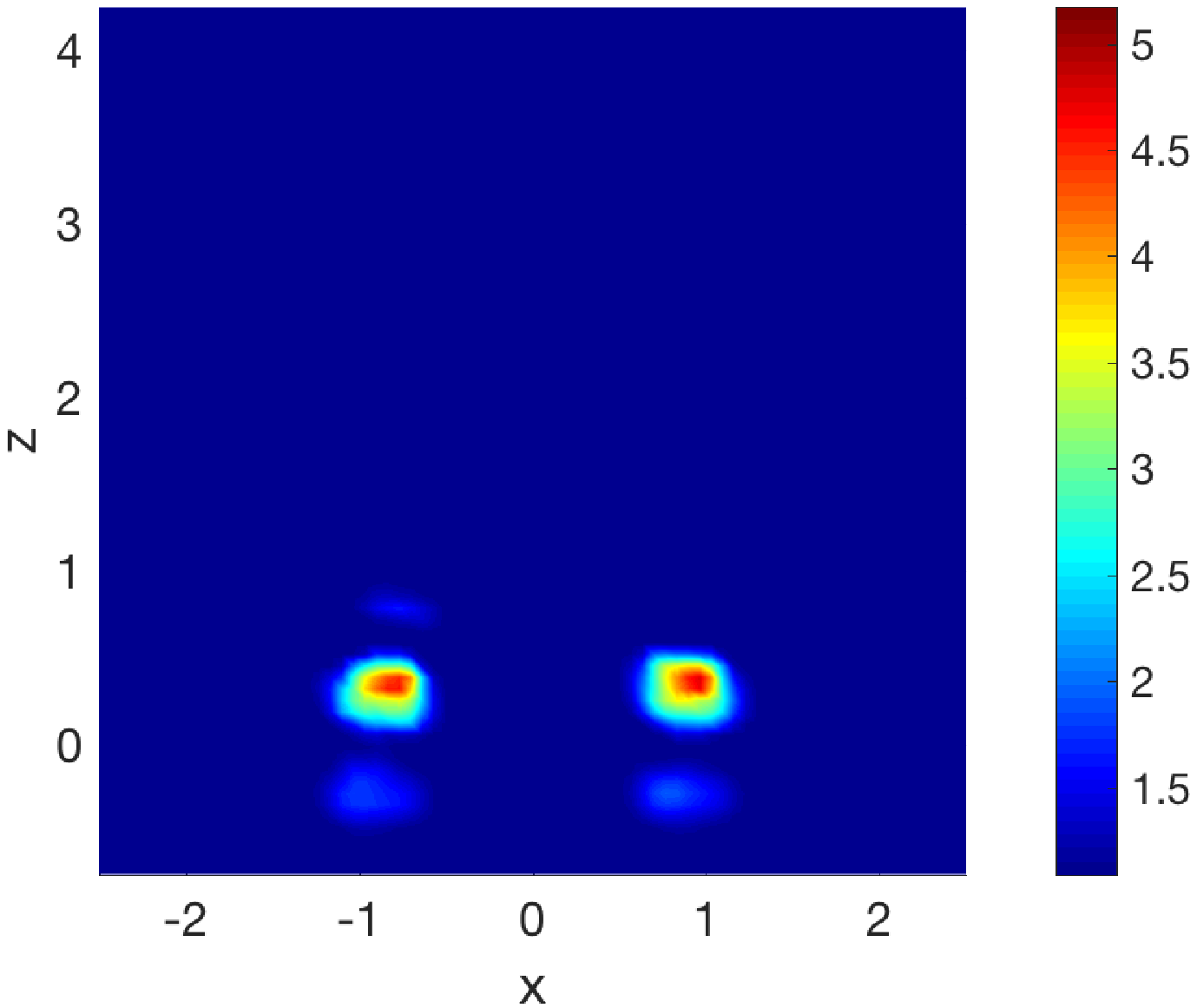}} \\
\subfloat[Exact geometry]{\includegraphics[width=6cm]{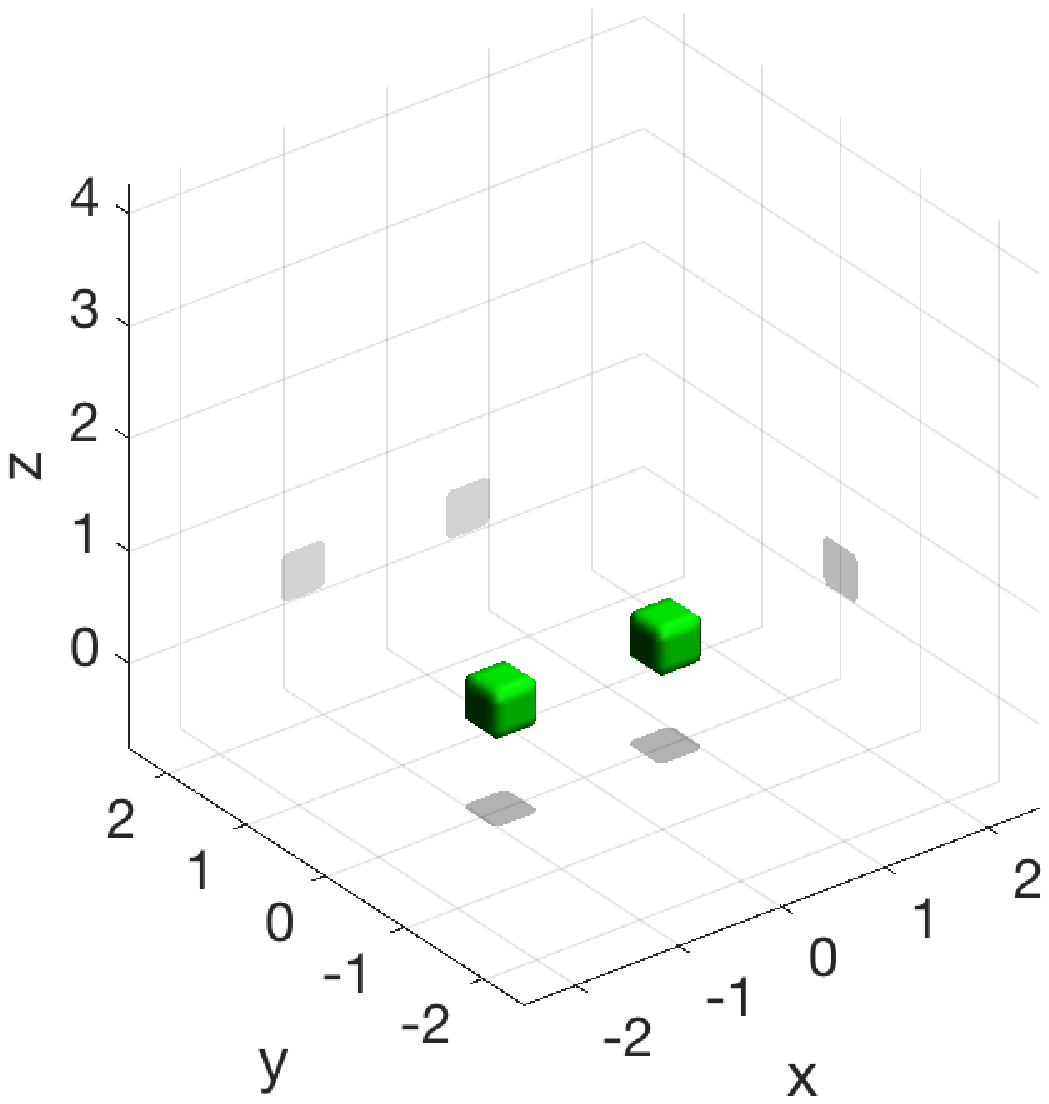}} 
\hspace{0.5cm} 
\subfloat[Reconstructed
geometry]{\includegraphics[width=6cm]{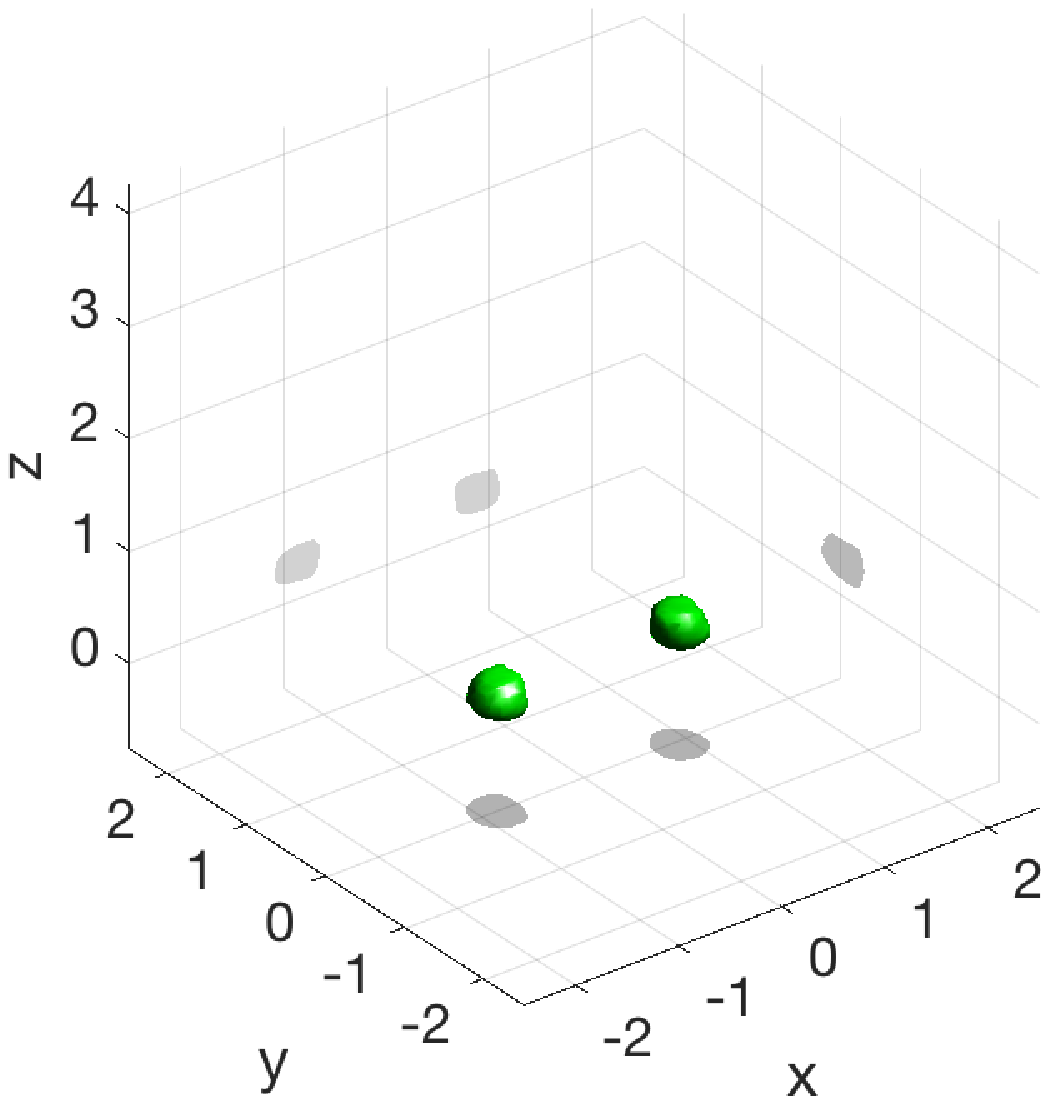}}
\caption{ Visualizations of the exact coefficient $c(\mathbf{x})$ (left) in~ 
\eqref{eq:coefficient2} and the reconstructed coefficient $c_{comp}(\mathbf{x%
})$ (right) for the case of backscatter data. The first row is the
projection of $c(\mathbf{x})$ and $c_{comp}(\mathbf{x})$ on $\{y=0\}$. The
last row is a 3D visualization of the exact and reconstructed geometry of
the target using MATLAB's isosurface. The isovalue is chosen as 50$\%$ of
the maximal value of $c_{comp}(\mathbf{x})$.}
\label{fi:4}
\end{figure}

\subsection{Numerical examples with experimental data}

\label{sec:8.7}

In the last numerical example we address the performance of our algorithm
for experimental multi-frequency data. The data were collected by a
microwave scattering facility at the University of North Carolina at
Charlotte. These measured data are the backscatter signals corresponding to
a single incident plane wave $e^{ikz}$. The setting for the measurement is
similar to that of the above considered backscatter data case.

It is worth mentioning that the raw data are contaminated by a significant
amount of noise, see \cite{Exp1} for a detailed discussion. Furthermore,
standard denoising techniques are inapplicable in this case since the raw
data have a richer content than the computationally simulated data.
Therefore, a heuristic data preprocessing procedure was applied to the raw
data in \cite{Exp1}. The preprocessed data were used then as the input for
our globally convergent algorithm. The aim of the data preprocessing of \cite%
{Exp1} is to make the resulting data look somewhat similar to the
computationally simulated data. We refer to the paper~\cite{Exp1} for all
the details about the experimental setup as well as the data preprocessing.

The target here is a tennis ball of the radius 0.31 (see Figure~\ref{fi:6}%
(a)). The directly measured average dielectric constant of this target at 3
GHz (or equivalently $k=6.28$) is 3.80, with $13\%$ measurement error given
by the standard deviation. This dielectric constant was independently
measured by the physicists Professor Michael A. Fiddy and Mr. Steven Kitchin
from the Department of Physics and Optical Science at the University of
North Carolina Charlotte. Our reconstruction result displayed in Figure~\ref%
{fi:6} indicates that our method accurately reconstructs the location of the
target. The maximal value for the computed coefficient is $c_{\max }=4.00$,
which is 5.3\% error, compared with the direct measurement.

\begin{figure}[h]
\centering
\subfloat[Exact geometry]{\includegraphics[width=6cm]{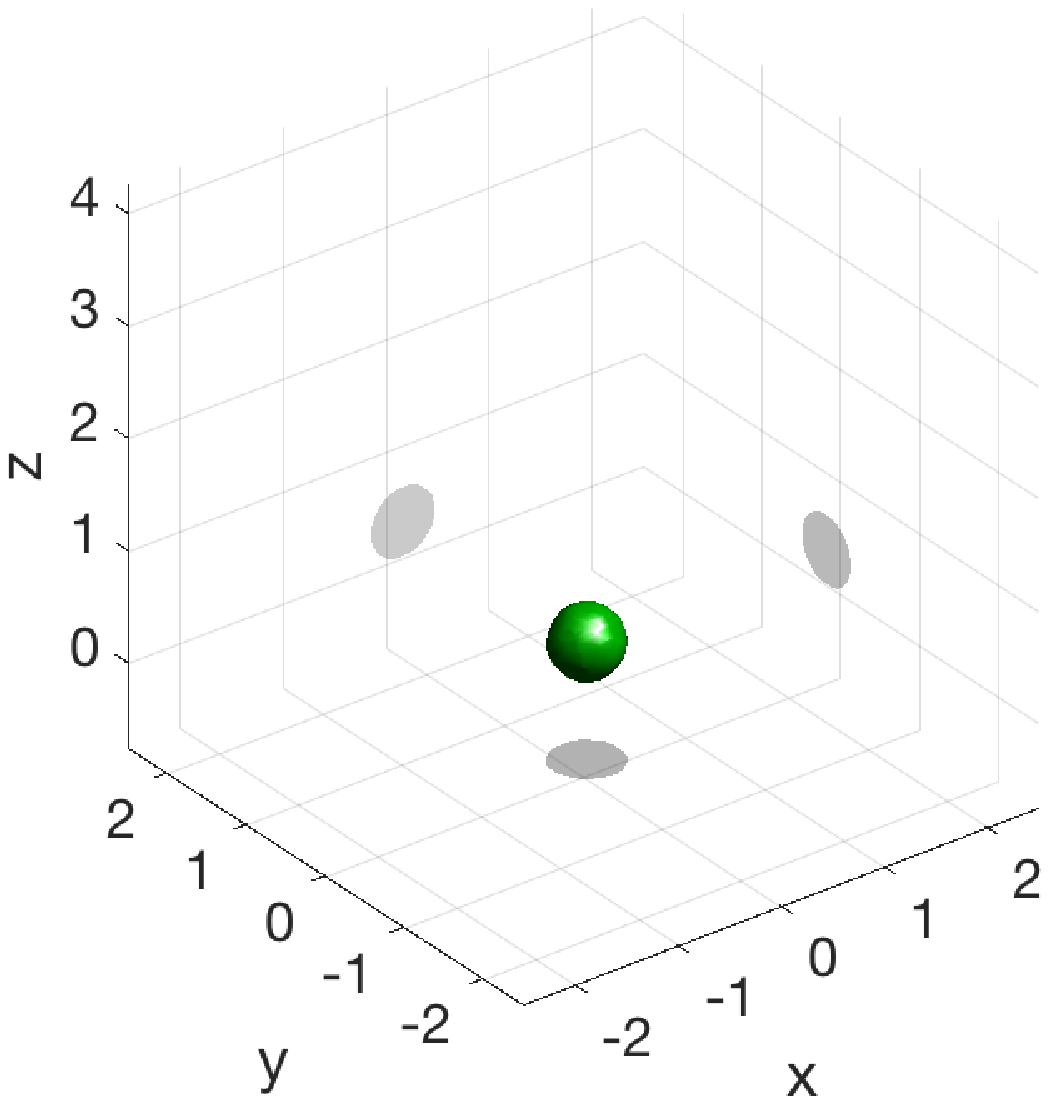}} 
\hspace{0.5cm} 
\subfloat[Reconstructed
geometry]{\includegraphics[width=6cm]{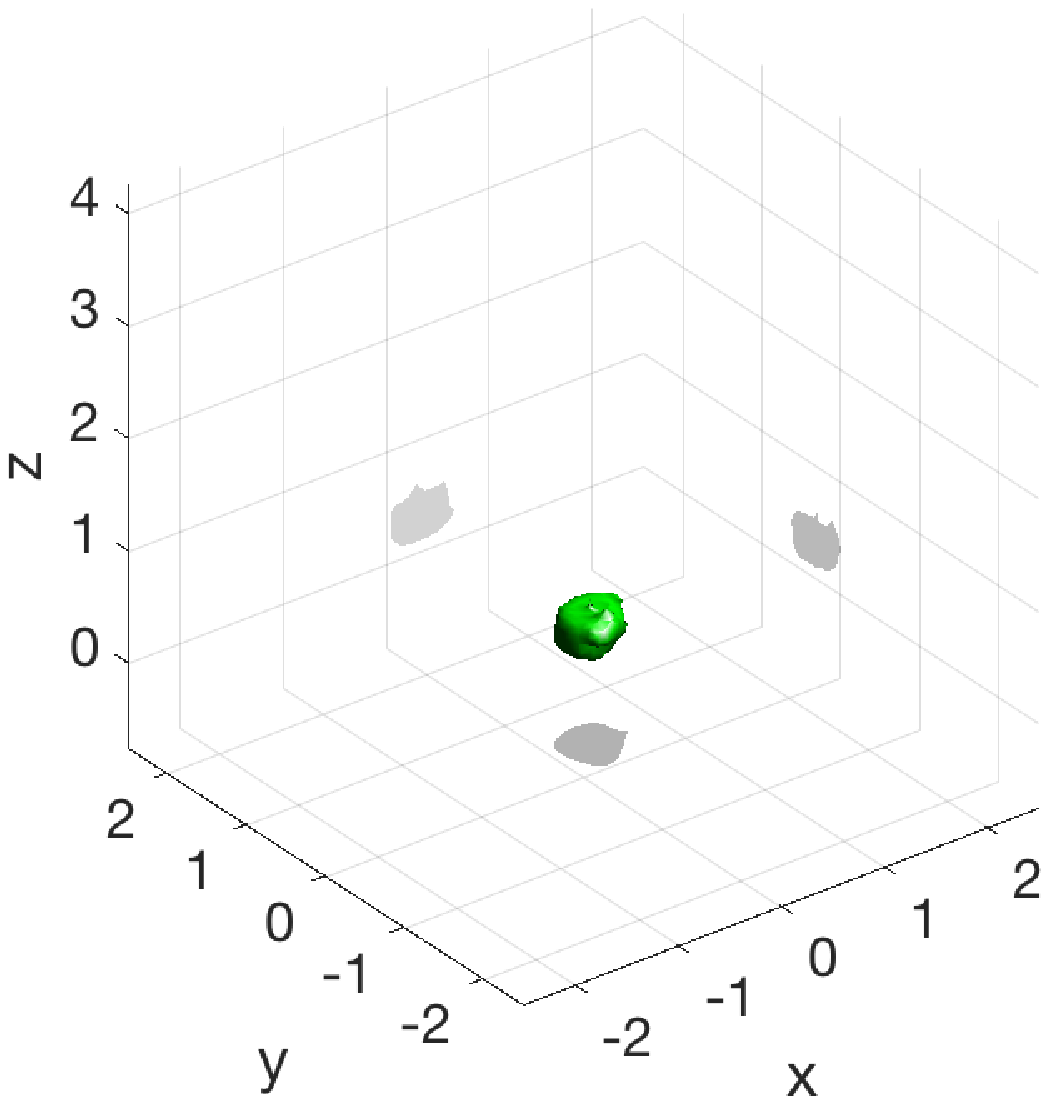}}
\caption{ Visualizations of exact (left) and reconstructed (right) geometry
of the target using the isosurface command in MATLAB.}
\label{fi:6}
\end{figure}

In \cite{Exp1} the above algorithm was tested for five sets of experimental
data. Table 1 presents reconstruction results of the value $c_{\max }$ for
all five cases; \textquotedblleft std.dev" means standard deviation of the
error in the a posteriori measured dielectric constants at the frequency of
3 GHz. One can observe that the relative error of the reconstruction is
between 2\% and 10.3\% in cases 1-4, and it is unclear in the case number 5.

%\begin{center}
%\textbf{Liem,}
%\end{center}
%
%\textbf{Please insert lines in the table. Make sure it fits page well.}
%
%\begin{center}
%Table1: Targets in experimental data of \cite{Exp1}; their measured $c$ and
%computed $c_{\max }$ dielectric constants
%
%\begin{tabular}{llll}
%Target & Measured $c$ (std.dev) & Computed $c_{\max }$ & Relative error in $%
%c_{\max }$ \\ 
%A piece of yellow pine & 5.30 (1.6\%) & 5.44 & 2.6\% \\ 
%A piece of wet wood & 8.48 (4.9\%) & 7.60 & 10.3\% \\ 
%A geode & 5.44 (1.1\%) & 5.55 & 2.0\% \\ 
%A tennis ball & 3.80 (13.0\%) & 4.00 & 5.3\% \\ 
%A baseball & unavailable & 4.76 & n/a%
%\end{tabular}
%\end{center}

\begin{table}[!h]
\caption{Measured and computed dielectric constants $c$ of the targets}
\label{tab:table2}\centering
\begin{tabular}{|c|c|c|c|}
\hline
Target & Measured $c$\,(std.~dev.) & Computed $c_{\max }$ & Relative error
\\ \hline
A piece of yellow pine & 5.30 (1.6\%) & 5.44 & 2.6\% \\ \hline
A piece of wet wood & 8.48 (4.9\%) & 7.60 & 10.3\% \\ \hline
A geode & 5.44 (1.1\%) & 5.55 & 2.0\% \\ \hline
A tennis ball & 3.80 (13.0\%) & 4.00 & 5.2\% \\ \hline
A baseball & not available & 4.76 & n/a \\ \hline
\end{tabular}%
\end{table}

\section{Summary}

\label{sec:9}

We have analytically developed a new numerical method for solving a
Coefficient Inverse Problem with single measurement data for the 3D
Helmholtz equation. The approximate global convergence of this method is
proved. Our targets of interest mimic antipersonnel mines and improvised
explosive devices. Numerical results for both computationally simulated and
experimental data demonstrate a good accuracy in reconstructed locations and
dielectric constants of targets. It is worth mentioning that, in the case of
experimental data, this good accuracy is achieved regardless on the above
mentioned high level of noise in the data.

\section*{Acknowledgments}

This work was supported by US Army Research Laboratory and US Army Research
Office grant W911NF-15-1-0233 and by the Office of Naval Research grant
N00014-15-1-2330. The authors are grateful to Professor Michael A. Fiddy and
Mr. Steven Kitchin for their excellent work on collection of experimental
data.

\providecommand{\bysame}{\leavevmode\hbox to3em{\hrulefill}\thinspace} %
\providecommand{\MR}{\relax\ifhmode\unskip\space\fi MR } 
% \MRhref is called by the amsart/book/proc definition of \MR.
\providecommand{\MRhref}[2]{  \href{http://www.ams.org/mathscinet-getitem?mr=#1}{#2}
} \providecommand{\href}[2]{#2}

%\bibliography{mybib}
%\bibliographystyle{amsplain}

\end{document}